\documentclass[11pt,letterpaper]{article}
\usepackage[utf8]{inputenc}
\usepackage[margin=1.0in]{geometry}
\usepackage{setspace}
\usepackage{amsmath}
\usepackage{amsthm}
\usepackage{amssymb}
\usepackage{mathrsfs}
\usepackage{graphicx}
\usepackage{dsfont}
\usepackage{enumerate}
\usepackage{mathabx}
\usepackage{mathdots}
\usepackage{authblk}
\usepackage{titlesec}
\usepackage{soul}
\usepackage{tikz-cd}
\usepackage{mathtools}
\usepackage{slashed}

\usepackage{hyperref}
\hypersetup{
    colorlinks=true,
    urlcolor=cyan,
    linkcolor=blue,
    citecolor=blue,
}

\usepackage[backend=bibtex,style=numeric]{biblatex}
\newcounter{bigthm}

\newtheoremstyle{citedtheorem}%
  {3pt}
  {3pt}
  {\itshape}
  {}
  {\bfseries}
  {.}
  {.5em}
  {\thmname{#1} \thmnumber{#2} \thmnote{\normalfont#3}}

\newtheoremstyle{citeddefinition}%
  {3pt}
  {3pt}
  {\normalfont}
  {}
  {\bfseries}
  {.}
  {.5em}
  {\thmname{#1} \thmnumber{#2} \thmnote{\normalfont#3}}

\newtheoremstyle{citedexample}%
  {3pt}
  {3pt}
  {\normalfont}
  {}
  {\itshape}
  {.}
  {.5em}
  {\thmname{#1} \thmnumber{#2} \thmnote{\normalfont#3}}

\theoremstyle{plain}
\newtheorem{theorem}{Theorem}[section]
\newtheorem{lemma}[theorem]{Lemma}

\newtheorem{proposition}[theorem]{Proposition}

\theoremstyle{citedtheorem}

\theoremstyle{definition} 
\newtheorem{definition}[theorem]{Definition}

\theoremstyle{citeddefinition}

\theoremstyle{remark}
\newtheorem{remark}[theorem]{Remark}

\theoremstyle{citedexample}

\usepackage{cleveref}

\newcommand{\bP}{\mathbb{P}}

\newcommand{\bS}{\mathbb{S}}

\newcommand{\bW}{\mathbb{W}}

\newcommand{\cC}{\mathcal{C}}

\newcommand{\cE}{\mathcal{E}}
\newcommand{\cF}{\mathcal{F}}

\newcommand{\cK}{\mathcal{K}}

\newcommand{\cM}{\mathcal{M}}
\newcommand{\cN}{\mathcal{N}}

\newcommand{\cR}{\mathcal{R}}

\newcommand{\cT}{\mathcal{T}}

\newcommand{\cV}{\mathcal{V}}
\newcommand{\cW}{\mathcal{W}}

\newcommand{\fF}{\mathfrak{F}}

\newcommand{\fe}{\mathfrak{e}}

\newcommand{\fu}{\mathfrak{u}}

\newcommand{\sC}{\mathscr{C}}

\newcommand{\sE}{\mathscr{E}}

\newcommand{\sG}{\mathscr{G}}

\newcommand{\sN}{\mathscr{N}}
\newcommand{\sO}{\mathscr{O}}

\newcommand{\sU}{\mathscr{U}}
\newcommand{\sV}{\mathscr{V}}

\renewcommand{\a}{\alpha} 
\renewcommand{\b}{\beta} 

\newcommand{\x}{\xi}

\newcommand{\vp}{\varphi}

\renewcommand{\l}{\ell} 

\newcommand{\Z}{\mathbb{Z}}

\newcommand{\R}{\mathbb{R}}
\newcommand{\C}{\mathbb{C}}

\renewcommand{\d}{\partial} 

\newcommand{\sub}{\subseteq}
\newcommand{\setm}{\, \setminus \,}

\newcommand{\8}{\infty}
\newcommand{\<}{\left\langle}
\renewcommand{\>}{\right\rangle}

\DeclareMathOperator{\Hom}{Hom}
\DeclareMathOperator{\End}{End}
\DeclareMathOperator{\id}{id}

\DeclareMathOperator{\rank}{rank}
\DeclareMathOperator{\Tr}{Tr}

\DeclareMathOperator{\Cl}{C\l}

\DeclareMathOperator{\Spin}{Spin}

\DeclareMathOperator{\SO}{SO}

\numberwithin{equation}{section}

\title{\vspace{-20pt}The Calder{\'o}n problem for harmonic spinors}
\author{Carlos Valero}

\begin{document}

\maketitle

\begin{abstract}
    We show that the conformal class of a compact spin manifold with boundary is uniquely determined by its boundary values of harmonic spinors if the metric is locally conformal to a real-analytic metric in some coordinates. More precisely, we consider the boundary conjugation (BC) map for the Dirac operator corresponding to MIT boundary conditions, and prove that if two locally conformally real-analytic (LCRA) metrics have equivalent BC maps over an open subset of the boundary, then the two manifolds are conformally equivalent. The class of LCRA manifolds includes conformally Einstein manifolds, as well as all smooth surfaces. We also show that a real-analytic unitary connection on an LCRA manifold is uniquely determined up to local gauge equivalence by the BC map of its twisted Dirac operator in dimensions greater than $2$.
\end{abstract}

\tableofcontents

\newpage

\section{Introduction}\label{sec: intro}

The geometric (or anisotropic) Calder{\'o}n problem consists in determining a Riemannian manifold with boundary $(M,g)$ from its Dirichlet-to-Neumann (DN) map $\Lambda_g : C^{\8}(\d M) \to C^{\8}(\d M)$, which sends $f \in C^{\8}(\d M)$ to the normal derivative of its harmonic extension to $M$. This problem is a geometric formulation of a question originally posed by Alberto Calder{\'o}n in \cite{Calderon1980}, wherein he asks whether one can determine the conductivity of a material in a region of space from measurements of voltage and current on the boundary. One obtains the geometric version of the problem from the physical one by considering anisotropic conductivities and viewing them as Riemannian metrics. One formulated geometrically, it is clear that one can study the Calder{\'o}n problem not only for the Laplacian, but indeed for many of the fundamental differential operators in Riemannian geometry; see \cite{KLU2011, Cekic2017, LTS2022, EV2024, Valero2025} for example. These geometric Calder{\'o}n problems form part of the wider program of understanding the extent to which underlying geometric objects, such as metrics and connections, are uniquely determined by knowledge of solutions to corresponding equations on the boundary. In this paper, we consider the Calder{\'o}n problem for a fundamental operator in geometry and physics which has hitherto received relatively little attention: the Dirac operator.

In less geometric settings, where the Dirac equation is thought of as an elliptic linear system on domains in $\R^n$, there are a number of works in the literature that consider the inverse problem of recovering a vector or matrix potential from boundary values of solutions to the Dirac equation; see \cite{Tsuchida1998, Li2007, Salo2008, Salo2010} and the references therein, for example. On the other hand, there are significantly fewer works that treat inverse problems for Dirac operators in more general geometric settings. One such work is that of Kurylev and Lassas \cite{KurylevLassas2009}, in which they show that a Riemannian manifold with boundary and Dirac bundle structure can be recovered from the spectral data of a Dirac operator with self-adjoint boundary conditions, where by spectral data we mean the set of eigenvalues and boundary values of the eigenfunctions. Equivalently, instead of the spectral data, one can start with the boundary response operator of the hyperbolic equation $i\d_t + D$ on $M \times \R$. Another work treating inverse problems for Dirac operators in geometric settings is \cite{QuanUhlmann2022}, wherein it is shown that a closed Riemannian manifold is recovered from a local source-to-solution map of the {\em fractional} Dirac operator. We also mention \cite{Roberts2022}, in which an algebraic version of the boundary control method is applied to an inverse boundary value problem for the Hodge--Dirac operator, as well as \cite{Valero2024}, which deals with the Calder{\'o}n problem for unitary connections that couple to (bona fide) spinors via a second-order version of the Yang--Mills--Dirac equations. Finally, we cite \cite{Valero2025}, in which an analogue of the Dirichlet-to-Neumann map, called the boundary conjugation (BC) map, is defined for the twisted Dirac operator $D_A - m$ on a spin manifold with local boundary conditions. It is shown in \cite{Valero2025} that when $m \neq 0$ and the data is analytic, then one can recover a Riemannian manifold and unitary connection from the BC map. The purpose of the present paper is to extend the results of \cite{Valero2025} to the case $m = 0$, which, owing to the nature of the Dirac operator on spinors, possesses an important symmetry not present in the $m \neq 0$ case: conformal covariance.

More precisely, as in \cite{Valero2025}, we consider a spin manifold with boundary $(M,g)$, on which there exists an irreducible spinor bundle $\bS$. Given a Hermitian bundle $(E,\eta)$ over $M$ and unitary connection $A$ on $E$, one can consider the twisted Dirac operator $D_A : \Gamma(\bS \otimes E) \to \Gamma(\bS \otimes E)$. We define the boundary conjugation (BC) map $\Theta_{g,A}$ corresponding to MIT boundary conditions as follows. Let $\gamma(\nu) \in \End(\bS \otimes E)|_{\d M}$ denote Clifford multiplication by the unit normal $\nu$ on the boundary. We have a decomposition $(\bS \otimes E)|_{\d M} = \bW_E^+ \oplus \bW_E^{-}$, where $\bW_E^{\mp}$ is the $\pm i$-eigenbundle of $\gamma(\nu)$. Let $\bP^{\pm}$ denote the orthogonal projection $(\bS \otimes E)|_{\d M} \to \bW_E^{\pm}$. Then, for any $f \in C^{\8}(\d M, \bW_E^+)$, there exists a unique smooth section $\psi_f \in \Gamma(\bS \otimes E)$ on $M$ satisfying
\begin{equation}
    \begin{cases}
        D_A \psi_f = 0, \\
        \bP^+ \psi_f|_{\d M} = f.
    \end{cases}
\end{equation}
We thus define the (MIT) BC map $\Theta_{g,A} : C^{\8}(\d M, \bW_E^+) \to C^{\8}(\d M, \bW_E^-)$ by
\begin{equation}
    \Theta_{g,A}(f) := \bP^{-} \psi_f|_{\d M}.
\end{equation}
Note that if $\sU$ is an open subset of $\d M$, we can restrict $\Theta_{g,A}$ to $f$ that are supported in $\sU$, thus obtaining a {\em local} BC map $\Theta_{g,A,\sU} : C^{\8}(\sU, \bW_E^+) \to C^{\8}(\sU, \bW_E^-)$. We shall denote the local BC map for the untwisted Dirac operator $D : \Gamma(\bS) \to \Gamma(\bS)$ simply by $\Theta_{g,\sU}$. Like the DN map for the Laplacian, the BC map characterizes the Cauchy data for the Dirac operator. Unlike with the DN map, different metrics have different a priori spinor bundles, and we thus need to make sense of how to compare the BC maps arising from different metrics. We make this precise in Definition \ref{def: equivalence}, and when we say that two BC maps are {\em equivalent}, as in the statement of Theorem \ref{main theorem} below, we mean that they are equivalent in the sense of Definition \ref{def: equivalence}.

In order to state our main result, we first introduce locally conformally real-analytic (LCRA) manifolds. An LCRA manifold is a Riemannian manifold $(M,g)$ such that the metric $g$ is locally conformal to a metric that is real-analytic in some coordinates (a more precise definition is given in Section \ref{subsec: LCRA}). In the context of inverse problems, LCRA manifolds were first introduced in \cite{LTS2022} when considering the Calder{\'o}n problem for the conformal Laplacian (though the definition we use is slightly more general; see Section \ref{subsec: LCRA}). The class of LCRA manifolds includes all smooth surfaces, conformally Einstein manifolds, as well as Bach and obstruction flat manifolds, which are conformally invariant generalizations of Einstein manifolds in even dimensions $n \geq 4$ \cite{FG1985, FG2012}. 

In addition to working over LCRA manifolds, when we consider the setting of twisted Dirac operators in dimensions $n \geq 3$, we shall also assume that the auxiliary data $(E, \eta, A)$ by which we are twisting the Dirac operator is admissible (see Definition \ref{def: admissible}); that is to say, that this data is essentially real-analytic up to the boundary in a way that is compatible with the LCRA structure. We can now state our main uniqueness result for the Calder{\'o}n problem for $D_A$:



\begin{theorem}\label{main theorem}
    For $i \in \{1,2\}$, let $(M_i, g_i)$ be a compact $n$-dimensional LCRA spin manifold with boundary, such that the boundaries are identified along an open subset $\sU \sub \d M_i$.
    \begin{enumerate}[{\em \ \ \ (1)}]
        \item Let $n \geq 3$. Suppose that $(E_i, \eta_i, A_i)$ is a Hermitian vector bundle over $M_i$ endowed with a unitary connection $A_i$, which is admissible in the sense of Definition \ref{def: admissible}. If $\Theta_{g_1,A_1,\sU}$ is equivalent to $\Theta_{g_2, A_2, \sU}$, then there exists a complex line bundle $L$ over $M_2$, a flat connection $\nabla^L$ on $L$, and a unitary isomorphism $\Phi_E : E_1 \to E_2 \otimes L$, such that $\Phi_E^*(\nabla^{A_2} \otimes \nabla^L) = \nabla^{A_1}$. Moreover, $\Phi_E$ covers a conformal equivalence $\vp : (M_1, g_1) \to (M_2, g_2)$ with $\vp|_{\sU} = \id$.

        \item Let $n = 2$. If $\Theta_{g_1,\sU}$ is equivalent to $\Theta_{g_2,\sU}$ and $g_1|_{\sU} = \lambda g_2|_{\sU}$ for some constant $\lambda$, then there is a conformal equivalence $\vp : (M_1, g_1) \to (M_2, g_2)$ with $\vp|_{\sU} = \id$.
    \end{enumerate}
\end{theorem}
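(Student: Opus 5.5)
The strategy follows \cite{Valero2025}: use the conformal covariance of the massless Dirac operator to reduce to real-analytic metrics locally, then recover the geometry through a boundary-determination and analytic-continuation scheme.

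\textbf{Step 1: conformal covariance.} If $\tilde g = e^{2\omega} g$, there is a canonical isometry of spinor bundles $\bS_g \cong \bS_{\tilde g}$ under which
\[
D_{\tilde g} \bigl( e^{-\frac{n-1}{2}\omega}\psi \bigr) = e^{-\frac{n+1}{2}\omega} D_g \psi .
\]
Harmonic spinors therefore correspond, and the MIT projections $\bP^\pm$ are preserved, since $\gamma(\nu)$ is conformally invariant after rescaling. Consequently the BC map changes only by conjugation with the multiplication operator $e^{-\frac{n-1}{2}\omega}|_{\d M}$. I would check that this conjugation is absorbed by the notion of equivalence in Definition \ref{def: equivalence}. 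On each LCRA chart near $\sU$ one may then replace $g_i$ by a real-analytic representative.

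\textbf{Step 2: boundary determination.} Following \cite{Valero2025}, I would compute the full symbol of $\Theta_{g,A,\sU}$ as a pseudodifferential operator. This is done by factorizing $D_A$ near the boundary in boundary normal coordinates (a Riccati-type equation for the symbol). From the symbol one reads off the Taylor series at $\sU$ of the metric and connection, in suitable gauges, modulo the conformal factor. To fix the conformal gauge, I would choose representatives with a normalization such as $g|_{\sU}$ fixed and an extra condition on the normal derivatives. The main worry is that in dimension $2$ the boundary metric is not determined by the symbol; this is exactly why the hypothesis $g_1|_{\sU}=\lambda g_2|_{\sU}$ is imposed in part (2).

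\textbf{Step 3: analytic continuation and global gluing.} By analyticity, the Taylor data determine the real-analytic representatives near $\sU$ up to a conformal diffeomorphism fixing $\sU$. Following the Lassas--Taylor--Uhlmann approach (as adapted for LCRA manifolds in \cite{LTS2022}), I would then build a conformal embedding of each $M_i$ into a sheaf-type space of germs of harmonic spinors. Concretely, I would use Green's kernels of $D_A$ with poles off the manifold, or the maps $x \mapsto$ (boundary values of spinor Green functions) in a slightly enlarged collar. Equality of the BC maps identifies the images via unique continuation for $D_A$ and analyticity, giving a global conformal diffeomorphism $\vp$ with $\vp|_{\sU}=\id$.

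\textbf{Step 4: recovering the connection.} For $n\ge3$, the connection is recovered from the Green kernels up to gauge. The gauge ambiguity appears because the spinor bundles are identified only up to a sign, i.e.\ a choice of spin structure. On simply connected pieces the identification of $E_1$ with $E_2$ is therefore determined only up to a locally constant unitary scalar. These scalars assemble into a flat line bundle $(L,\nabla^L)$, which yields $\Phi_E : E_1 \to E_2\otimes L$ with $\Phi_E^*(\nabla^{A_2}\otimes\nabla^L)=\nabla^{A_1}$.

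I expect the main obstacle to be Step 3, specifically the global part. One must show that the Green-kernel embedding is injective and an immersion using only MIT-boundary harmonic spinors, while also keeping track of the locally varying conformal factors of the LCRA charts. The same difficulty arises in checking that the resulting diffeomorphism is conformal rather than an isometry.
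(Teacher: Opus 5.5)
Your outline has the right architecture: conformal rescaling, boundary determination, extension past $\sU$, a Green's-kernel embedding, and recovery of the connection up to a flat line bundle. However, the steps that carry the real content are either incorrect or left open.

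The first problem is in Step 1. The conjugation by $e^{\frac{n-1}{2}\omega}|_{\sU}$ in \eqref{conformal change of BC map} is \emph{not} absorbed by canonical equivalence unless $\omega|_{\sU}$ is locally constant (Remark \ref{remark: linear equivalence}, Proposition \ref{prop: linear vs canonical equivalence}). So you cannot independently replace each $g_i=e^{2\alpha_i}h_i$ by $h_i$. You must first use boundary determination to get $g_1|_{\sU}=c\,g_2|_{\sU}$, and then rescale both metrics by factors with identical boundary values.

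More importantly, Steps 2--3 need a representative that satisfies two conditions at once. It must be normalized, so that Theorem \ref{thm: boundary determination inductive form} forces all normal derivatives to agree. It must also be real-analytic up to $\sU$ in its own boundary normal coordinates, so that equal Taylor series give equal metrics. A normalization like $\d_n^k H=0$ generally destroys the second property, so your ``by analyticity'' step has nothing to act on.

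The missing idea is that the BC map only sees $u|_{\sU}$, which leaves the normal derivatives of the conformal factor free. The paper uses this by solving the analytic Cauchy problem \eqref{BVP for conformal factor} ($u|_\Gamma=0$, $\d_n u=H$) via Cauchy--Kovalevskaya. This yields an analytic representative with $S\equiv 0$ near $\sU$ and $H=0$ on $\sU$. The identity $\d_n^N S=2(n-1)\d_n^{N+1}H+\cF_{N+1}(g,\dots,\d_n^{N+1}g)$ then drives the induction through Theorem \ref{thm: boundary determination inductive form}.

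The second problem is in Step 3. The Green's kernels of an LCRA metric are not real-analytic: under $g\mapsto e^{2u}g$ they pick up $e^{-\frac{n-1}{2}u(x)}e^{-\frac{n-1}{2}u(y)}$. So the analytic-continuation arguments behind injectivity, the immersion property and the matching of images do not go through. You name this as the main obstacle but give no mechanism.

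The paper's device is the rescaled kernel $K_i(x,y)=\tilde G_i(x,y)/(P_i(x)P_i(y))$, where $P_i(x)^2=\int_\Gamma\rho\,\Tr\bigl(\tilde G_i(x,z)^*\tilde G_i(x,z)\bigr)\,dz$ over a piece $\Gamma$ of the \emph{extended} boundary. This kernel is invariant under conformal factors vanishing near $\Gamma$. Hence it locally coincides with the rescaled kernel of an analytic metric and is itself analytic (Lemma \ref{lemma: K_i is analytic}). It also agrees on $W$ and keeps the $d^{1-n}$ singularity.

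Three further points are missing from your sketch:
\begin{itemize}
\item Conformality of $\vp$ comes from analytic continuation of the conformal class, not of the metric (Lemma \ref{lemma: vp is conformal equivalence}).
\item Escape of limit points to $\d M_2\setminus\sU$ must be excluded. The paper does this with an MIT-specific argument: $\tilde G_2(x,x_2)$ has nontrivial kernel at such points.
\item Since the spin connections are not analytic, you cannot continue the intertwining of the twisted connections from $W$ directly. The paper instead takes the partial trace $\frac{1}{\rank\bS_1}\Tr_{\bS_1}\Phi$ and analytically continues a correction $1$-form that defines the flat $\nabla^L$.
\end{itemize}
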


The conclusion of (1) in Theorem \ref{main theorem} implies that the two connections $A_1$ and $A_2$ are locally gauge equivalent, and can only fail to be globally gauge equivalent in a way that is related to the non-uniqueness of spin structures on a spin manifold (see Example 4.8 in \cite{Valero2025}). Of course, as in \cite{Valero2025}, it follows that if we know a priori that the line bundle $L$ must be trivial, for example if the manifolds satisfy $H^1(M_i, \Z_2) = 0$ or if the isomorphism class of the bundles $E_i$ is known, then $\Phi_E$ is a global gauge equivalence betwen $A_1$ and $A_2$.

The proof of Theorem \ref{main theorem} that we give in this paper is an adaptation of the method used in \cite{LTS2022} for the conformal Laplacian, in which a conformal equivalence is constructed by using conformally rescaled Green's functions to embed both manifolds into a suitable common Sobolev space. This approach is itself an extension of the method used by Lassas, Taylor, and Uhlmann to solve the Calder{\'o}n problem for the Laplacian in the case of analytic data \cite{LassasUhlmann2001, LTU2003}, and which has also been used to much success in tackling the Calder{\'o}n problem for various other geometric operators in the presence of analytic data \cite{KLU2011, LTS2022, EV2024, Gabdurakhmanov2025, Valero2025}. The version of the argument that we present in this paper, however, is somewhat distinct from that given in \cite{LTS2022}. Unlike in \cite{LTS2022}, our proof is based on finding a suitable conformal factor in analytic coordinates near the boundary, so that after rescaling by said factor, the metrics can be analytically extended beyond the boundary, to a region in which they are isometric. This approach has a number of advantages. First, the local nature of this argument allows us to deal with the case of partial data, where the BC map is known only on an open subset $\sU \sub \d M$, rather than restricting to global data as in \cite{LTS2022}. Second, our use of the conformal symmetry of the problem means that we can require less regularity of the metric on the boundary; indeed, our definition of LCRA manifolds is slightly weaker than the one given in \cite{LTS2022} (see Remark \ref{remark: LCRA definition is more general}). Finally, we note that our method of finding such a conformal factor near the boundary relies on solving a local Cauchy problem, and appears easier to generalize to other conformal invariant operators, including the conformal Laplacian.



The structure of the paper is as follows. In Section \ref{sec: Preliminaries}, we review some of the basic definitions and notions needed in the statement and proof of Theorem \ref{main theorem}. In particular, Section \ref{subsec: spinors} covers the basics of spinors and Dirac operators; Section \ref{subsec: MIT BC map} introduces the MIT boundary value problem for Dirac operators on bounded domains along with the corresponding BC map; and Section \ref{subsec: LCRA} formally introduces locally conformally real-analytic (LCRA) manifolds and discusses some of their important properties. In Section \ref{sec: proof full section} we give the proof of Theorem \ref{main theorem}, which is broken into 3 parts: in Section \ref{sec: extension}, we extend our manifolds in a real-analytic fashion after a suitable conformal rescaling; in Section \ref{sec: Greens functions determined} we show that the BC map determines a conformally rescaled version of the Green's kernels on the extended manifold; and finally in Section \ref{sec: reconstruction} we explicitly construct the isomorphism and conformal equivalence of Theorem \ref{main theorem} by using the rescaled Green's kernels to embed our bundles into a common Sobolev space.




\vspace{10pt}

\section{Preliminaries}\label{sec: Preliminaries}

In this section, we briefly review some of the basic definitions and results necessary to understand the statement and proof of Theorem \ref{main theorem}. In Section \ref{subsec: spinors} we give an overview spinors and Dirac operators, as well as some of their important properties. We also define local boundary conditions for the Dirac operator and define the associated boundary conjugation map $\Theta_{g,A,\sU}$. In Section \ref{subsec: LCRA} we define locally conformally real-analytic manifolds, and discuss some of their properties.

\vspace{10pt}
\subsection{Spinors and twisted Dirac operators}\label{subsec: spinors}

We first recall some fundamental ideas from the theory of Clifford algebras, spin representations, and spinor bundles. For details, we refer the reader to Chapters I and II of \cite{LawsonMichelsohn}.

\begin{definition}
    Let $(V,g)$ be a real inner product space. The {\em Clifford algebra} $\Cl(V,g)$ is the real associative algebra with unit $1$ generated by all $v \in V$ subject to the relations 
    \begin{equation}
        v_1 v_2 + v_2 v_1 = -2 g(v_1, v_2)
    \end{equation}
    for $v_1, v_2 \in V$. We let $\Cl_n := \Cl(\R^n, g_n)$ where $g_n$ is the standard inner product on $\R^n$. Of course, every Clifford algebra $\Cl(V)$ is non-canonically isomorphic to $\Cl_n$ where $n = \dim{V}$.
\end{definition}

Note that we have $V \sub \Cl(V,g)$, and $v^2 = -|v|^2$ for all $v \in V$. Moreover, if $\dim{V} = n$, then $\Cl(V,g)$ is also generated by an orthonormal basis $e_1, \dots, e_n$ of $V$ subject to the relations
\begin{equation}
    e_i^2 = -1,
\end{equation}
\begin{equation}
    e_i e_j + e_j e_i = 0, \ \ i \neq j.
\end{equation}
Note also that $\Cl_n$ has dimension $2^n$ and is isomorphic as a vector space to the exterior algebra.

\vspace{5pt}
\begin{remark}\label{remark: spin group}
    The {\em Spin group} $\Spin(n) \sub \Cl_n$ is defined to be the group multiplicatively generated by all elements of the form $v_1 \cdots v_{2k}$ where $|v_i| = 1$. The significance of the spin group lies in the fact that one can define a two-to-one Lie group covering $\lambda : \Spin(n) \to SO(n)$. In particular, for $n \geq 3$, the group $\Spin(n)$ is simply connected and isomorphic to the universal cover of $SO(n)$.
\end{remark}
\vspace{5pt}

A (complex) {\em representation} of $\Cl(V,g)$ is a {\em real} homomorphism $F : \Cl(V,g) \to \End{W}$, where $W$ is a finite-dimensional complex vector space. An irreducible representation of $\Cl_n$ is called a {\em spinor representation}. There is a unique spinor representation modulo equivalence when $n$ is even, which we denote by $\Delta_n$, and two inequivalent ones when $n$ is odd, denoted by $\Delta_n^{\pm}$. The spinor representations have dimension $2^{\left[ \frac{n}{2} \right]}$. In fact, the Clifford algebra $\Cl_n$ is isomorphic to $\End{\Delta_n}$ when $n$ is even, and $\End{\Delta_n^+} \oplus \End{\Delta_n^{-}}$ when $n$ is odd. We also note that it always possible to equip a representation $W$ of $\Cl_n$ with a Hermitian inner product such that multiplication by unit vectors is unitary. We shall assume such a Hermitian inner product has been chosen.

\vspace{5pt}
\begin{remark}\label{remark: spin representations for spin group}
    The {\em spinor representations} $\Delta^{\Spin}_n$ of $\Spin(n)$ are defined by restricting the spinor representations of $\Cl_n$ to $\Spin(n) \sub \Cl_n$. When $n$ is odd, restricting either of $\Delta_n^{\pm}$ to $\Spin(n)$ yields equivalent representations of $\Spin(n)$. When $n$ is even, restricting $\Delta_n$ to $\Spin(n)$ yields the spinor representation for $\Spin(n)$, which is reducible: we have $\Delta^{\Spin}_n = \Delta^{\Spin, +}_n \oplus \Delta^{\Spin, -}_n$, where $\Delta^{\Spin, \pm}_n$ is an irreducible representation for $\Spin(n)$. We mention this here as it is more common to consider spinor representations from the point of view of $\Spin(n)$.
\end{remark}
\vspace{5pt}

We now want to take the above linear-algebraic setting and transplant it onto a manifold:

\begin{definition}\label{def: spinor bundles}
    Let $(M,g)$ be an oriented Riemannian manifold. The {\em Clifford bundle} $\Cl(M,g)$ is defined as the bundle of real associative algebras whose fiber at $x \in M$ is the Clifford algebra $\Cl(T_xM, g_x)$. We say that $(M,g)$ is a {\em spin manifold} if there exists on $M$ a complex vector bundle $\bS$, called a {\em spinor bundle}, and a smooth map $\gamma : \Cl(M,g) \to \End(\bS)$ such that for each $x \in M$, we have that $\gamma_x : \Cl(M,g)_x \to \End{\bS_x}$ is isomorphic to a fixed spinor representation of $\Cl_n$.
\end{definition}

 Note that a map $\gamma : \Cl(M,g) \to \End(\bS)$ as defined above is uniquely determined by its values on $TM \sub \Cl(M,g)$. In particular, the specification of such a map is essentially equivalent to a giving a map $\gamma : TM \to \End(\bS)$, denoted by the same symbol, such that
 \begin{equation}
     \gamma(X)\gamma(Y) + \gamma(Y)\gamma(X) = -2g(X,Y) \id_{\bS}.
 \end{equation}
 We refer to $\gamma$ as {\em Clifford multiplication,} and shall sometimes write $\gamma(X)\psi$ as $X \cdot \psi$ for $X \in T_xM$ and $\psi \in \bS_x$. Due to the isomorphism $TM \to T^*M$ induced by the metric, we also have an associated map $\gamma : T^*M \to \End(\bS)$, which we shall also denote by the same symbol. We also mention that it is always possible to equip $\bS$ with a Hermitian metric $\< \cdot, \cdot \>$ such that
\begin{equation}
    \< \gamma(e) \psi_1, \psi_2 \> = -\< \psi_1, \gamma(e) \psi_2 \>
\end{equation}
for all $e \in TM$ with $|e| = 1$. In other words, Clifford multiplication by unit vectors is unitary. We will assume henceforward that our spinor bundles are equipped with such a Hermitian metric. 

\vspace{5pt}
\begin{remark}\label{remark: spin structure construction}
    A spinor bundle as defined in Definition \ref{def: spinor bundles} does not necessarily exist for any oriented manifold, and when it does it is not necessarily unique. In fact, the obstruction to the existence of such a bundle is topological: a spinor bundle exists if and only if the second Stieffel--Whitney class $w_2(M) \in H^2(M, \Z_2)$ vanishes. On such manifolds, it is possible to find a principal $\Spin(n)$-bundle $P_{\Spin}$ and a double-covering map $\xi$ from $P_{\Spin(n)}$ to the orthonormal frame bundle $P_{\SO}$ of the metric $g$, such that $\xi$ intertwines with the double-covering $\lambda : \Spin(n) \to SO(n)$ from Remark \ref{remark: spin group}. The pair  $(P_{\Spin}, \x)$ is called a {\em spin structure}. The set of inequivalent spin structures is in one-to-one correspondence with $H^1(M,\Z_2)$. Given a choice of spin structure $P_{\Spin}$, the spinor bundle $\bS$ can be defined as the associated bundle $P_{\Spin} \times_{\rho} \Delta_n^{\Spin}$, where $\rho : \Spin(n) \to GL(\Delta_n^{\Spin})$ is the spinor representation of $\Spin(n)$. In particular, we note that given a local orthonormal frame $(e_i)$, which is a local section of $P_{\SO}$, there are two local orthonormal frames for $\bS$ determined by $(e_i)$. These are denoted $(\sigma_{\a})$ and $-(\sigma_{\a})$, and satisfy $\xi(\pm (\sigma_{\a})) = (e_i)$, where $\xi : P_{\Spin} \to P_{\SO}$ is as above and we think of $P_{\Spin}$ as lying in the orthonormal frame bundle of the associated bundle $\bS$. We shall refer to $\pm (\sigma_{\a})$ as the {\em canonical spinor frames} determined by $(e_i)$.
\end{remark}
\vspace{5pt}

The spinor bundle $\bS$ carries a special connection $\nabla^s$ called the {\em spin connection}, which is induced by the Levi-Civita connection as follows. Let $(e_i)$ be a local orthonormal frame on $U \subseteq M$ with respect to which the Levi-Civita connection $1$-form $\omega^i_{\ j} \in \Omega^1(U, \mathfrak{so}(n))$ is given by
\begin{equation}
    \nabla_X e_j = \omega^i_{\ j}(X) e_i \label{levi civita one form}
\end{equation}
for $X \in TM|_U$, where we employ the summation convention. In a canonical spinor frame determined by $(e_i)$ (see Remark \ref{remark: spin structure construction}), the spin connection $\nabla^s$ on $\bS$ is locally given by
\begin{equation}
    \nabla^s_X \sigma_{\a} = -\frac{1}{2} \sum_{i < j} \omega^i_{\ j}(X) \gamma(e_i) \gamma(e_j) \sigma_{\a},
\end{equation}
where $\omega^i_{\ j}$ is the connection $1$-form determined by $(e_i)$ as in \eqref{levi civita one form}. Here we have explicitly written out the sum, which we shall do on occasion in the interest of clarity. Note that the above definition of the spin connection does not depend on the choice of canonical spinor frame associated with $(e_i)$. The spin connection $\nabla^s$ has the property that it is compatible with Clifford multiplication and the Levi-Civita connection on $TM$: that is, for $X,Y \in \Gamma(TM)$ and $\psi \in \Gamma(\bS)$ we have
\begin{equation}
    \nabla^s_X \left( \gamma(Y)\psi \right) = \gamma(\nabla_X Y) \psi + \gamma(Y) \nabla^s \psi.
\end{equation}
The spin connection on $\bS$ allows us to define the Dirac operator as follows:

\begin{definition}\label{def: dirac operator}
    The {\em Dirac operator} $D : \Gamma(\bS) \to \Gamma(\bS)$ is the first-order differential operator defined as follows: for any local orthonormal frame $(e_i)$ over $U \sub M$, 
    \begin{equation}
        D \psi := \sum_{i=1}^n \gamma(e_i) \nabla^s_{e_i} \psi
    \end{equation}
    for $\psi \in \Gamma(\bS|_U)$. This definition does not depend on the choice of orthonormal frame.
\end{definition}

In fact, we want to consider a more general setting, in which we twist the Dirac operator acting on spinors by an auxiliary bundle and unitary connection. Let us thus assume that $(E, \eta)$ is a Hermitian vector bundle over $M$, and let $\nabla^A$ be a unitary connection on $E$. Then we can endow the bundle $\bS_E := \bS \otimes E$ with the connection $\nabla^s \otimes \nabla^A$, which we shall continue to denote by $\nabla^A$ when there is no confusion. Note also that Clifford multiplication extends to $\bS_E$ by acting only on the first factor. We continue to denote this map by $\gamma$. Moreover, $\bS_E$ inherits a Hermitian inner product from $\eta$ and the one on $\bS$. We may thus define the {\em twisted Dirac operator} $D_A$, which acts on sections of $\bS_E$, by formally replacing $\nabla^s$ by $\nabla^A$ in Definition \ref{def: dirac operator}. 

\vspace{5pt}
\begin{remark}
    We will often simply use the symbol $A$ to denote both the unitary connection $\nabla^A$ on $E$ as well as its local representative, a $\fu(N)$-valued $1$-form, in a local trivialization.
\end{remark}
\vspace{5pt}

The twisted Dirac operator $D_A$ has principal symbol $\sigma(D_A)(x,\x) = i \gamma(\x)$. In particular, $D_A$ is elliptic, and its square $D_A^2$ has scalar principal symbol given by $|\x|^2$. In fact, we have the following formula for $D_A^2$, known as the Lichnerowicz--Weitzenb{\"o}ck formula:
\begin{equation}\label{Lichnerowicz formula}
    D_A^2 = \left( \nabla^A \right)^* \nabla^A + \frac{1}{4} S_g + \fF_A,
\end{equation}
where $\left( \nabla^A \right)^*$ denotes the formal adjoint of $\nabla^A$, $S_g$ denotes the scalar curvature of the metric $g$, and the Weitzenb{\"o}ck curvature operator $\fF_A \in \End(\bS_E)$ is defined by
\begin{equation}
    \fF_A := \sum_{j < k} \gamma(e_j) \gamma(e_k) F_A(e_j, e_k),
\end{equation}
where $F_A \in \Omega^2(\End{E})$ is the curvature $2$-form of the connection $A$.

We note that all of the above definitions carry over straightforwardly to spin manifolds with boundary, which one can define as smooth domains in spin manifolds without boundary. In this setting, we have the following Green's identity for the twisted Dirac operator \cite[\S II.5]{LawsonMichelsohn}:
\begin{equation}\label{Green's identity for D_A}
    \int_M \<  D_A  \psi_1, \psi_2 \> - \<  \psi_1,  D_A \psi_2 \> = -\int_{\d M} \< \gamma(\nu) \psi_1, \psi_2 \>
\end{equation}
for $\psi_1, \psi_2 \in \Gamma(\bS_E)$, where $\nu$ is the inner unit normal to $\d M$. In particular, for a unitary connection $A$, the twisted Dirac operator $D_A$ is formally self-adjoint.

Finally, we recall how the Dirac operator behaves under conformal transformations, which shall play an important role in the present paper. The main subtlety with comparing Dirac operators corresponding to different metrics is that the spinor bundle is defined with respect to a given metric, and there is not an obvious way of identifying them. When the two metrics are conformal, however, there a canonical identification, which we recall here. We focus on the untwisted Dirac operator on $\bS$, though everything carries over to the twisted Dirac operator on $\bS_E$, since conformal scaling does not affect the $E$ factor. For details on the following, see \cite{Hijazi1986}.

Recall that a metric $\bar{g}$ on $M$ is conformal to another metric $g$ if $\bar{g} = e^{2u} g$ for some smooth function $u \in C^{\8}(M)$. In this case, there is a canonical map from the orthonormal frame bundle of $g$ to that of $\bar{g}$ obtained by sending an $g$-orthonormal frame $(e_i)$ to the $\bar{g}$-orthonormal frame $(\bar{e}_i)$, where $\bar{e}_i := e^{-u} e_i$. This induces an isomorphism between corresponding spin structures, and hence between the corresponding spinors bundles $\bS \to \bar{\bS}$. We shall denote this canonical isomorphism simply by $\psi \mapsto \bar{\psi}$. Once we identify the spinor bundles of $g$ and $\bar{g}$ via this isomorphism, we can compare the Dirac operators $D_{g}$ and $D_{\bar{g}}$. The Clifford multiplication maps of $g$ and $\bar{g}$ are related by $\bar{\gamma}(X) = e^u \gamma(X)$ for $X \in TM$, while the spin connections satisfy
\begin{equation}\label{spin connection under conformal change}
    \overline{\nabla}^s_X \bar{\psi} = \overline{\nabla^s_X \psi} - \frac{1}{2} \bar{\gamma}(X) \bar{\gamma}(du) \bar{\psi} - \frac{1}{2} X(u) \bar{\psi}.
\end{equation}
As a result, one finds that the Dirac operators $D_g$ and $D_{\bar{g}}$ are related by
\begin{equation}\label{Dirac operator under conformal change v1}
    D_{\bar{g}} \bar{\psi} = e^{-u} \left( \overline{D_g \psi} + \frac{n-1}{2} \bar{\gamma}(du) \bar{\psi} \right),
\end{equation}
where $n$ is the dimension of $M$. Note that the above is equivalent to
\begin{equation}\label{Dirac operator under conformal change exp version}
    \overline{D_g \psi} = e^{\frac{n+1}{2} u} D_{\bar{g}} \left( e^{- \frac{n-1}{2} u} \bar{\psi} \right).
\end{equation}
Recall that we call a spinor $\psi$ {\em harmonic} if $D \psi = 0$. Note that \eqref{Dirac operator under conformal change exp version} implies that if $\psi$ is harmonic for the metric $g$, then $e^{-\frac{n-1}{2} u} \bar{\psi}$ is harmonic for the metric $\bar{g} = e^{2u} g$. As remarked above, under the induced isomorphism $\bS_E \to \bar{\bS}_E$, equation \eqref{Dirac operator under conformal change exp version} continues to hold with $D$ replaced by $D_A$.

\vspace{10pt}
\subsection{MIT boundary conditions and the boundary conjugation map}\label{subsec: MIT BC map}

Let us now consider boundary conditions for the Dirac operator on a spin manifold with boundary. We begin our discussion by focusing on the untwisted Dirac operator for simplicity, but all that we consider here applies equally well to twisted Dirac operators. 

The most common boundary conditions for the Dirac operator considered in the literature are the fully non-local APS boundary conditions introduced in \cite{APS1975I, APS1975II, APS1976III}, which are defined by a $0$-th order pseudodifferential $L^2$-projector on the boundary, and play an important role in physics and geometry. However, since we are interested in the Calder{\'o}n problem not only for global data, but also for partial data on some subset $\sU \sub \d M$, it is more convenient for us to consider local boundary conditions for $D$. The most common {\em local} boundary conditions considered for the Dirac operator are the chiral boundary conditions (see \cite{HMZ2001, HMR2002, Raulot2011, Valero2025}, for example), and the MIT boundary conditions; the latter were introduced in \cite{Johnson1975} to model the confinement of quarks. While the chiral boundary conditions have the advantage of being self-adjoint for the Dirac operator, the MIT boundary conditions have the notable advantage that they can be defined on any spin manifold with boundary, unlike chiral boundary conditions, which are subject to the existence of a chirality operator (see \cite{HMZ2001, HMR2002}). We shall therefore focus on the MIT conditions in this paper.

Let $M$ be a compact spin manifold with boundary $\d M$, endowed with a spinor bundle $\bS$. Let $\nu$ be the inward unit normal defined on the boundary. Note that $i \gamma(\nu)$ is an involution on $\bS|_{\d M}$. In particular, $\bS|_{\d M}$ decomposes as the direct sum $\bW^+ \oplus \bW^-$, where $\bW^{\pm}$ is the $\pm 1$-eigenbundle of the endomorphism $i \gamma(\nu)$. Let $\bP^{\pm} : \bS|_{\d M} \to \bW^{\pm}$ be the corresponding orthogonal projectors:
\begin{equation}
    \bP^{\pm} := \frac{1}{2} \left( 1 \pm i \gamma(\nu) \right).
\end{equation}
Then $\bP^{\pm}$ defines an elliptic boundary condition for the Dirac operator. In particular, the restriction of $D$ to the space of all $\psi \in H^1(M, \bS)$ such that $\bP^{\pm} \psi = 0$ is a Fredholm operator whose spectrum consists entirely of isolated complex eigenvalues $\lambda_k^{\pm}$ of finite multiplicity satisfying $|\lambda_k^{\pm}| \to \infty$ as $k \to \infty$ \cite{HMZ2001, HMR2002}. We call the set of eigenvalues $\lambda_k^{\pm}$ the MIT$^\pm$ spectrum. We have the following \cite{HMR2002}: 

\begin{proposition}
    The MIT$^\pm$ spectrum lies entirely in $\C \setm \R$. In particular, the Dirac operator with MIT boundary conditions is invertible. 
\end{proposition}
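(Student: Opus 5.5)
The argument is an integration by parts: take an eigenspinor of $D$ with MIT boundary condition, pair it with itself via the Green's identity \eqref{Green's identity for D_A}, and show that a nonzero real part of the eigenvalue is impossible.

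Suppose $\psi \in H^1(M,\bS)$, $\psi \neq 0$, satisfies $D\psi = \lambda \psi$ and $\bP^{+}\psi|_{\d M} = 0$; the case $\bP^-\psi|_{\d M}=0$ is identical. By density and elliptic regularity for this elliptic boundary problem, we may treat $\psi$ as smooth up to the boundary, or simply work with $H^1$ traces, for which the Green's identity still holds.

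Apply \eqref{Green's identity for D_A} with $\psi_1 = \psi_2 = \psi$:
\begin{equation*}
(\lambda - \bar\lambda)\|\psi\|_{L^2}^2 = -\int_{\d M} \<\gamma(\nu)\psi,\psi\>.
\end{equation*}
Since $\bP^+\psi = 0$, the boundary value lies in $\bW^-$, so $i\gamma(\nu)\psi = -\psi$, that is, $\gamma(\nu)\psi = i\psi$. The boundary integrand is then $\<i\psi,\psi\> = i|\psi|^2$, taking the inner product complex-linear in the first slot. Hence
\begin{equation*}
2i\,\mathrm{Im}(\lambda)\,\|\psi\|^2_{L^2(M)} = -i\|\psi\|^2_{L^2(\d M)},
\end{equation*}
so $\mathrm{Im}\lambda = -\|\psi\|^2_{L^2(\d M)}/(2\|\psi\|^2_{L^2(M)}) \le 0$. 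For $\bP^-$ the sign flips, giving $\mathrm{Im}\lambda \geq 0$.

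It remains to exclude $\mathrm{Im}\lambda = 0$, which is the only substantive step. If $\mathrm{Im}\lambda=0$, the identity forces $\psi|_{\d M} = 0$. Then $\psi$ solves the first-order elliptic equation $(D-\lambda)\psi=0$ with vanishing Cauchy data on $\d M$. To conclude $\psi\equiv 0$, extend $\psi$ by zero across the boundary into a slightly larger open manifold $\widetilde M$ containing $M$. Because the trace vanishes, the extension is in $H^1$ and solves $(D-\lambda)\tilde\psi = 0$ weakly on $\widetilde M$: the distributional derivative picks up no boundary term. The extension vanishes on an open set, so unique continuation for Dirac-type operators gives $\tilde\psi\equiv 0$ on the connected component. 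Unique continuation holds because $D^2$ has scalar Laplace-type principal symbol, so Aronszajn's theorem applies to $(D+\lambda)(D-\lambda)$. Applied on each component of $M$, each of which meets the boundary since $M$ is compact with boundary, this gives $\psi \equiv 0$, a contradiction.

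Therefore every eigenvalue has nonzero imaginary part. In particular $0$ is not an eigenvalue. Since the operator is Fredholm with discrete spectrum, as recalled above, and the boundary conditions $\bP^\pm$ are adjoint to each other under the Green's identity, the kernel of the adjoint problem also vanishes by the same argument. Hence the MIT operator is invertible.
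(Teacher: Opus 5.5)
Your proof is correct and is essentially the paper's argument: pairing the eigenspinor with itself in the Green's identity forces $\psi|_{\d M}=0$ when the eigenvalue is real, and unique continuation then gives $\psi\equiv 0$. The paper just invokes unique continuation from the boundary, whereas you justify it via zero extension and Aronszajn's theorem; your sign of $\mathrm{Im}\,\lambda$ and the adjoint-kernel argument for invertibility are harmless additions.

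One small point: being compact with boundary does not by itself force every component of $M$ to meet $\d M$. On a closed component $D$ is self-adjoint with real spectrum, so that step needs the standing connectedness assumption, which the paper also makes tacitly.
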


\begin{proof}
    Indeed, if $D \psi = m \psi$ with $m \in \R$, and $i \gamma(\nu) \psi = \pm \psi$, then \eqref{Green's identity for D_A} gives
    \begin{equation}
        0 = i \int_{\d M} |\psi|^2,
    \end{equation}
    which implies that $\psi|_{\d M} = 0$. But since Dirac operators satisfy a unique continuation principle from the boundary \cite{BW1993}, it follows that we must have $\psi = 0$ on $M$. 
\end{proof}

\begin{remark}
    The above discussion carries over to the setting of twisted Dirac operators $D_A$ when $A$ is a unitary connection. In this case, we have $\bS_E := \bW^+_E \oplus \bW^-_E$, where $\bW^{\pm}_E := \bW^{\pm} \otimes E$. As with Clifford multiplication, the projectors $\bP^{\pm}$ act only on the $\bS$ factor.
\end{remark}
\vspace{5pt}

Since $D_A$ with MIT boundary conditions is invertible, it follows that we may uniquely solve
\begin{equation}\label{MIT BVP}
    \begin{cases}
        D_A \psi = 0, \\
        \bP^+ \psi = f,
    \end{cases}
\end{equation}
for any $f \in C^\8(\d M, \bW_E^+)$. This motivates the following definition:

\begin{definition}\label{def: BC map}
    Let $(M,g)$ be a compact spin manifold with boundary and $(E,h,A)$ a Hermitian bundle over $M$ with unitary connection as above. We define the {\em boundary conjugation (BC)} map $\Theta_{g,A} : C^{\8}(\d M, \bW_E^+) \to C^{\8}(\d M, \bW_E^-)$ by
    \begin{equation}
        \Theta_{g,A}(f) := \bP^{-} \psi|_{\d M},
    \end{equation}
    where $\psi$ is the unique solution to \eqref{MIT BVP} and $\bP^- : \bS_E|_{\d M} \to \bW_E^{-}$ is the orthogonal projector defined above. If $\sU$ is an open subset of the boundary, we define the {\em local} BC map $\Theta_{g,A,\sU}$ by restricting $\Theta_{g,A}$ to smooth sections of $\bW_E^{+}$ that have compact support in $\sU$.
\end{definition}

\begin{remark}
    For simplicity, we shall often assume that $\sU \sub \d M$ is connected.
\end{remark}

The boundary conjugation map is to the Dirac operator what the Dirichlet-to-Neumann map is for the Laplacian, and like the Dirichlet-to-Neumann map, the BC map characterizes the Cauchy data of harmonic spinors on $M$, which, since $D_A$ is a first-order operator, consists of their boundary values. As shown in \cite{Valero2025} for chiral boundary conditions, for real $m$ not in the MIT$^+$ spectrum, one can relate the BC map for $D_A - m$ to the Dirichlet-to-Neumann map for $D^2_A - m^2$, so long as the latter is defined; that is, when $m^2$ is not in the Dirichlet spectrum of $D_A^2$. As we have seen, $0$ is never in the MIT spectrum. In fact this is also true for the Dirichlet spectrum of $D_A^2$:

\begin{proposition}\label{prop: D_A^2 invertible}
    The Dirac Laplacian $D_A^2$ with Dirichlet boundary conditions is invertible.
\end{proposition}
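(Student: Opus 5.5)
Because $D_A^2$ with Dirichlet conditions is a self-adjoint elliptic realization (the Friedrichs extension of a nonnegative operator), its spectrum is real and discrete. It is therefore enough to show that its kernel is trivial; Fredholm theory of index zero for the Dirichlet problem then gives invertibility. So let $\psi \in H^1_0(M,\bS_E)\cap H^2$ (smooth up to the boundary by elliptic regularity) with $D_A^2\psi = 0$ and $\psi|_{\d M} = 0$. The aim is to show $\psi = 0$, using the two tools already available: Green's identity \eqref{Green's identity for D_A} and unique continuation from the boundary for Dirac operators \cite{BW1993}.

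First, apply \eqref{Green's identity for D_A} with $\psi_1 = D_A\psi$ and $\psi_2 = \psi$:
\begin{equation*}
    \int_M \< D_A^2\psi, \psi\> - \|D_A\psi\|^2_{L^2} = -\int_{\d M}\<\gamma(\nu) D_A\psi, \psi\>.
\end{equation*}
The boundary term vanishes because $\psi|_{\d M}=0$, and the first integral vanishes because $D_A^2\psi = 0$. Hence $D_A\psi = 0$ on $M$, and $\psi$ is a harmonic spinor with $\psi|_{\d M}=0$.

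Next, the unique continuation principle from the boundary for $D_A$ (as invoked in the proof of the preceding proposition) shows that a harmonic spinor with vanishing boundary values vanishes identically on each connected component of $M$ meeting $\d M$. Components of $M$ do not need separate treatment, since every component of a compact manifold with boundary in this setting has nonempty boundary (or we assume $M$ connected). Thus $\ker D_A^2 = 0$ under Dirichlet conditions. Since the Dirichlet problem for the Laplace-type operator $D_A^2 = (\nabla^A)^*\nabla^A + \tfrac14 S_g + \fF_A$ is Fredholm of index zero (the operator is a compact perturbation of $(\nabla^A)^*\nabla^A$, which is invertible on $H^1_0$), injectivity implies invertibility.

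The only delicate point is the unique continuation step. Vanishing of $\psi$ on $\d M$ is exactly Cauchy data for the first-order operator $D_A$, so the cited result applies directly; here this is a citation rather than an argument. If needed, one can instead extend $\psi$ by zero across $\d M$ into a slightly larger manifold. The extension lies in $H^1$ and is $D_A$-harmonic there, because the jump term $\gamma(\nu)\psi|_{\d M}$ vanishes. Weak unique continuation for $D_A$ (via $D_A^2$ having scalar principal symbol and Aronszajn's theorem) then gives $\psi\equiv 0$.
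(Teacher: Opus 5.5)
Your proof is correct and is essentially the paper's argument: Green's identity with $\psi_1 = D_A\psi$, $\psi_2 = \psi$ forces $\|D_A\psi\|_{L^2} = 0$, and unique continuation from the boundary then gives $\psi = 0$. The points you add are correct details that the paper leaves implicit: that injectivity suffices (by self-adjointness and Fredholm index zero), the connectedness caveat, and the zero-extension route to unique continuation.
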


\begin{proof}
    Suppose $D_A^2 \psi = 0$ and $\psi|_{\d M} = 0$. Then, by the Green's identity \eqref{Green's identity for D_A}, we have
    \begin{equation}
        \int_{M} |D_A \psi|^2 = 0,
    \end{equation}
    which implies that $D_A \psi = 0$ on $M$. But we also have $\psi|_{\d M} = 0$. Therefore, it follows from the unique continuation principle \cite{BW1993} that $\psi = 0$ on $M$.
\end{proof}

Since $0$ is not in the Dirichlet spectrum of $D_A^2$, the Dirichlet-to-Neumann map for $D_A^2$ can be defined, and by relating this DN map to the BC map as in \cite{Valero2025}, one can compute the full symbol of the BC map in a local trivialization using the same method. In fact, the symbol of the MIT BC map is formally identical to that of the chiral BC map considered in \cite{Valero2025}, replacing all instances of $\gamma(\nu)$ by $\pm i$ whenever it acts on $\bW^{\mp}$. Hence, one finds in particular the following analogue of \cite[Proposition 3.4]{Valero2025} for the MIT boundary conjugation map: 

\begin{proposition}\label{prop: principal symbol of BC map}
    The BC map is a $0$-th order pseudodifferential operator with principal symbol
    \begin{equation}
        \vartheta_0(x, \x) = \frac{\gamma(\x)}{|\x|}.
    \end{equation}
\end{proposition}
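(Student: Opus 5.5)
We prove Proposition \ref{prop: principal symbol of BC map} by factoring the problem near the boundary and reading off the Calder{\'o}n projector. The plan is to construct a microlocal parametrix for the harmonic extension $\psi$ of $f$ in boundary normal coordinates, and then read off $\bP^-\psi|_{\d M}$ in terms of $\bP^+\psi|_{\d M}=f$.

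\textbf{Local form of $D_A$.} Work in a collar $x=(x',x_n)$ with $x_n$ the distance to $\d M$, so $\nu=\d_{x_n}$, and trivialize $\bS_E$ by a parallel frame in the normal direction. Up to zeroth-order terms,
\[
D_A=\gamma(\nu)\bigl(\d_{x_n}+\mathcal{D}_{x_n}\bigr),\qquad \mathcal{D}_{x_n}:=-\gamma(\nu)\sum_{j<n}\gamma(e_j)\nabla^A_{e_j}.
\]
Here $\mathcal{D}_{x_n}$ is a first-order elliptic family on the slices. Its principal symbol is $b(x,\x')=-i\gamma(\nu)\gamma(\x')$, which is self-adjoint and satisfies $b^2=|\x'|^2$. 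Hence $b$ has eigenvalues $\pm|\x'|$, with spectral projectors $\Pi_\pm=\frac12\bigl(1\pm b/|\x'|\bigr)$.

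\textbf{Factoring the equation.} Solutions of $(\d_{x_n}+\mathcal{D})\psi=0$ that are smooth (non-growing) into the interior $x_n>0$ have boundary values lying, microlocally, in the range of the Calder{\'o}n projector. As in \cite{Valero2025}, this is done via a heat-type factorization (or via the DN map of $D_A^2$ from Proposition \ref{prop: D_A^2 invertible}). The Calder{\'o}n projector is a $0$-th order $\Psi$DO with principal symbol $\Pi_+$, corresponding to the decaying modes $e^{-x_n|\x'|}$. Thus, modulo smoothing and lower order, the Cauchy data $h=\psi|_{\d M}$ satisfy $\Pi_-h=0$.

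\textbf{Solving the boundary condition.} Write $h=f+\Theta f$ with $f\in\bW^+$ and $\Theta f\in\bW^-$. Since $\gamma(\nu)$ anticommutes with $\gamma(\x')$, the operator $\gamma(\x')$ maps $\bW^+\to\bW^-$ and vice versa. In the splitting $\bW^+\oplus\bW^-$, $b$ is off-diagonal. On $\bW^+$ we have $\gamma(\nu)=-i$, so $b|_{\bW^+}=-i(-i)\gamma(\x')=-\gamma(\x')$; similarly $b|_{\bW^-}=-\gamma(\x')$ up to the sign conventions.

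The condition $\Pi_-h=0$ means $bh=|\x'|h$. Taking the $\bW^-$ component gives $\pm\gamma(\x')f=|\x'|\Theta f$, so $\vartheta_0=\pm\gamma(\x')/|\x'|$. The $\bW^+$ component is then automatically consistent, because $\gamma(\x')^2=-|\x'|^2$.

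\textbf{The main obstacle.} The remaining work is twofold. First, track the sign conventions (inward normal, the $\pm i$ eigenbundles, the orientation of the Green's identity) to fix the sign as $+$, matching the statement. Second, justify rigorously that the MIT problem's solution operator has principal part given by this Calder{\'o}n projector. The latter follows from ellipticity of the MIT condition, since $\bP^+$ restricted to the range of $\Pi_+$ is an isomorphism onto $\bW^+$, together with the invertibility of $D_A$ under MIT conditions. The symbolic identification of $\Theta$ with a $0$-th order $\Psi$DO is then standard (Seeley/Boutet de Monvel). Lower-order symbols are not needed here.
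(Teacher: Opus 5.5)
Your route is sound and differs from the paper's, though it ends in the same linear algebra. The paper does not build a Calder\'on projector for the first-order system. Instead it uses Proposition \ref{prop: D_A^2 invertible} to define the Dirichlet-to-Neumann map of $D_A^2$. As in \cite{Valero2025}, it then uses the Dirac equation, which expresses $\nabla_\nu\psi$ through tangential derivatives, to turn that DN map into a pseudodifferential constraint on the Cauchy data. Finally it solves for the $\bW^-$ part, with $\gamma(\nu)$ replaced by $\pm i$ on $\bW^{\mp}$. At principal level this constraint is exactly your $\Pi_-h=0$. Your version is more economical for the principal symbol: it needs only ellipticity and invertibility of the MIT problem, not Dirichlet invertibility of $D_A^2$. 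The paper's version gives an algorithm for the \emph{full} symbol, via the factorization of the DN map of an operator with scalar principal symbol. That is what is needed later, in Theorem \ref{thm: boundary determination inductive form} and Appendix \ref{sec: appendix L}. Your appeal to Seeley can be made explicit. If $\mathcal{C}$ is the Calder\'on projector, then $\bP^-(1-\mathcal{C})\bP^-\,\Theta=\bP^-\mathcal{C}\bP^+$. Since $b$ is off-diagonal in $\bW^+\oplus\bW^-$, the operator $\bP^-(1-\mathcal{C})\bP^-$ is elliptic on $\bW^-$ with principal symbol $\tfrac12$. Hence, modulo smoothing operators, $\Theta$ is a $0$-th order $\Psi$DO with principal symbol $2\,\bP^-\Pi_+\bP^+=\bP^-b\,\bP^+/|\x'|$.

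The actual error in your write-up is in the sign, which is exactly what the statement asserts and what you deferred. For $f\in\bW^+$, the factor $\gamma(\nu)$ in $bf=-i\gamma(\nu)\gamma(\x')f$ acts on $\gamma(\x')f\in\bW^-$, where $\gamma(\nu)=+i$. So $b|_{\bW^+}=+\gamma(\x')$, while $b|_{\bW^-}=-\gamma(\x')$. The two blocks must have opposite signs, because $b^2=|\x'|^2$ but $\gamma(\x')^2=-|\x'|^2$. With the restrictions you wrote (both equal to $-\gamma(\x')$) one gets $b^2=-|\x'|^2$, and the $\bW^+$ component of $bh=|\x'|h$ is \emph{not} consistent, contrary to your claim. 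With the correct blocks, the $\bW^-$ component reads $\gamma(\x')f=|\x'|\,\Theta f$, which gives $\vartheta_0=+\gamma(\x')/|\x'|$ at once. No further convention-tracking is needed: the inward normal, the convention $\sigma(D_A)=i\gamma(\x)$, and $\bW^\pm$ as the $\pm1$-eigenbundles of $i\gamma(\nu)$ are already built into your setup. The Green's identity plays no role at the principal level. A half-space check confirms this: the decaying mode $e^{ix'\cdot\x'-x_n|\x'|}\psi_0$ is harmonic if and only if $i\gamma(\x')\psi_0=|\x'|\gamma(\nu)\psi_0$, and the $\bW^-$ component of that equation is the same identity.
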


\vspace{5pt}
\begin{remark}\label{remark: principal symbol determines conformal class}
    The principal symbol $\vartheta_0(x,\x)$ determines the conformal class of the pullback metric $g|_{\d M}$ on the boundary. Indeed, by the Clifford relations, we have for $\x,\eta \in T_x^*(\d M) \setm 0$
\begin{equation}\label{theta_0 determines conformal class}
    \vartheta_0(x,\x)\vartheta_0(x,\eta) +  \vartheta_0(x,\eta)\vartheta_0(x,\x) = - \frac{2 g(\x,\eta)}{|\x||\eta|}.
\end{equation}
The right-hand-side of \eqref{theta_0 determines conformal class} is the cosine of the angle between the rays determined by $\x$ and $\eta$. This implies that $\vartheta_0$ uniquely determines the conformal class of $g|_{\d M}$. On the other hand, by the discussion preceding \eqref{spin connection under conformal change}, one can see that $\vartheta_0$ is invariant under conformal scaling.
\end{remark}
\vspace{5pt}

\begin{remark}
    One could of course, instead of the ``plus-to-minus" BC map defined above, consider the opposite ``minus-to-plus" BC map $\Theta_{g,A,\sU}^{-} : C^{\8}(\sU, \bW_E^{-}) \to C^{\8}(\sU, \bW_E^+)$, defined in complete analogy with Definition \ref{def: BC map}, interchanging the roles of $\bP^{\pm}$. It is not difficult to see that these two maps are inverses of one another. Moreover, Proposition \ref{prop: principal symbol of BC map} implies that the BC maps extend to bounded maps $L^2(\sU, \bW_E^{\pm}) \to L^2(\sU, \bW_E^{\mp})$. Using the Green's identity \eqref{Green's identity for D_A}, it is easy to see that $\Theta_{g,A,
    \sU}^{-}$ must in fact also be the adjoint of $\Theta_{g,A,\sU}$. It therefore follows that the BC maps are in fact {\em unitary} isomorphisms $L^2(\sU, \bW_E^{\pm}) \to L^2(\sU, \bW_E^{\mp})$.
\end{remark}
\vspace{5pt}


We now want to consider the question of how to compare BC maps arising from different metrics and connections. Let us first consider a boundary-preserving isometry $\vp : (M, g) \to (M',g')$ such that $\vp|_{\sU} = \id$. Such a map induces an isomorphism between the oriented orthonormal frame bundles of $g$ and $g'$, which lifts to a map between corresponding spin structures. We thus obtain an induced map $\hat{\vp} : \bS \to \bS'$ between the corresponding spinor bundles. It is easy to see that twisted Dirac operators satisfy $D_{g',A'} = \hat{\vp} \circ D_{g,A} \circ \hat{\vp}^{-1}$, where $(E', \eta', A')$ is the pullback of $(E,\eta,A)$ by $\vp$. Note that we also have $\hat{\vp} \circ \bP^{\pm} \circ \hat{\vp}^{-1}$. Therefore, the BC maps satisfy
\begin{equation}
    \Theta_{g',A',\sU} = \hat{\vp} \circ \Theta_{g,A,\sU} \circ \hat{\vp}^{-1}.
\end{equation}
A similar result holds for {\em gauge transformations}, by which we simply mean unitary automorphisms of $(E,\eta,A)$. More precisely, if $\Phi : (E,\eta,A) \to (E',\eta',A')$ is a unitary isomorphism between two Hermitian bundles with unitary connection covering $\id_M$, then we have
\begin{equation}
    \Theta_{g',A',\sU} = \Phi|_{\sU} \circ \Theta_{g,A,\sU} \circ \Phi^{-1}|_{\sU}.
\end{equation}

The above tells us how the BC map behaves under the natural symmetries of the structures involved, namely isometries and gauge transformations. In order to state the Calder{\'o}n problem for the Dirac operator, however, we need a way of comparing the BC maps of two a priori unrelated metrics on $\sU$. Motivated by the above discussion on how the BC map changes under isometries and gauge transformations, we make the following definition: 


\begin{definition}\label{def: equivalence}
    Suppose $(M, g)$ and $(M', g')$ are compact spin manifolds with boundary whose boundaries are identified along $\sU \sub \d M, \d M'$. Let $E$ and $E'$ be Hermitian vector bundles over $M$ and $M'$, respectively. Then we say that $\Theta_{g,A,\sU}$ and $\Theta_{g', A', \sU}$ are {\em (canonically) equivalent} if there is a unitary isomorphism $\chi : E|_{\sU} \to E'|_{\sU}$, and a map $\hat{F} : \bS|_{\sU} \mapsto \bS'|_{\sU}$ induced by an isomorphism of the oriented orthonormal frame bundles of $g$ and $g'$ over $\sU$, such that
    \begin{equation}\label{definition of eqivalence eqn}
        \Theta_{g', A', \sU} = \left( \hat{F} \otimes \chi \right) \circ \Theta_{g, A, \sU} \circ \left( \hat{F} \otimes \chi \right)^{-1}.
    \end{equation}
    We shall write $\Theta_{g, A, \sU} \sim \Theta_{g', A', \sU}$ to mean that $\Theta_{g, A, \sU}$ is (canonically) equivalent to $\Theta_{g', A', \sU}$.
\end{definition}

\vspace{3pt}
\begin{remark}
    We call the above definition {\em canonical} equivalence only to distinguish it from {\em linear} equivalence that we introduce below in Remark \ref{remark: linear equivalence}. Throughout this paper, we shall use the term equivalence without qualifiers to mean canonical equivalence.
\end{remark}
\vspace{5pt}

The discussion preceding Definition \ref{def: equivalence} thus means that if $g$ and $g'$ are related by an isometry, and if $A$ and $A'$ are related by a gauge equivalence, then $\Theta_{g,A,\sU} \sim \Theta_{g', A', \sU}$. Another way to think of equivalence is the following: two BC maps are equivalent if one can choose orthonormal frames for $E|_{\sU}$ and $E'|_{\sU}$, along with orthonormal frames for $g$ and $g'$ over $\sU$, such that with respect to the induced canonical spinor frames, the matrix representatives of $\Theta_{g,A,\sU}$ and $\Theta_{g', A', \sU}$ are equal.


Finally, we want to consider how the BC map behaves under conformal scaling of the metric. Recall from the discussion preceding \eqref{spin connection under conformal change} that if $\bar{g} = e^{2u} g$ for some $u \in C^{\8}(M)$, then we have a canonical identification $\bS \to \bar{\bS}$ of the spinor bundles, which is induced by by an isomorphism of the orthonormal frame bundles, and which we denote simply by $\psi \mapsto \bar{\psi}$. Note that under this identification, we have $\bar{\gamma}(\bar{\nu}) = \gamma(\nu)$. In particular, the MIT boundary projectors $\bP^{\pm}$ are conformally invariant. Using this, as well as the conformal covariance of the Dirac operator \eqref{Dirac operator under conformal change exp version}, one finds that the boundary conjugation maps of $g$ and $\bar{g}$ are related by
\begin{equation}\label{conformal change of BC map}
    \overline{{\Theta}_{g,A,\sU}(f)} = e^{\frac{n-1}{2} u} \Theta_{\bar{g}, A, \sU} \left( e^{-\frac{n-1}{2} u} \bar{f} \right).
\end{equation}
We extend the notation introduced in Definition \ref{def: equivalence} in an obvious way by writing
\begin{equation}\label{conformal change of BC map equivalence}
    \Theta_{g,A,\sU} \sim e^{\frac{n-1}{2} u} \Theta_{\bar{g}, A, \sU} e^{- \frac{n-1}{2} u}.
\end{equation}
Note in particular that only $u|_{\sU}$ appears in the above, and moreover, if $u|_{\sU}$ is constant then the BC map of $\bar{g}$ is equivalent to the that of $g$. This conformal gauge symmetry of the boundary conjugation map shall play a crucial role in the proof of Theorem \ref{main theorem}. 

\vspace{5pt}
\begin{remark}\label{remark: linear equivalence}
    The notion of equivalence (what we call canonical equivalence) given in Definition \ref{def: equivalence} is motivated by the usual way that spinor bundles arising from different metrics are compared in the literature (see \cite{Maier1997} for a more detailed exposition); one typically wants to keep track of the unique structure of spinor bundles as objects arising from the underlying geometry. As mentioned above, \eqref{conformal change of BC map equivalence}, implies that this notion of equivalence distinguishes BC maps of conformally related metrics unless the conformal factor is constant on the boundary. One could instead ask for a weaker notion of equivalence, such as conjugation by a linear isomorphism. We will say that two BC maps $\Theta_{g,A,\sU}$ and $\Theta_{g',A',\sU}$ are {\em linearly equivalent} if \eqref{definition of eqivalence eqn} holds with $\hat{F} : \bS|_{\sU} \to \bS'|_{\sU}$ some arbitrary linear isomorphism, not necessarily induced by a map of orthonormal frame bundles. Note that by \eqref{conformal change of BC map}, two conformally related metrics always have linearly equivalent BC maps, regardless of what values the conformal factor takes on the boundary. Indeed, the required linear isomorphism in this case is provided by $e^{\frac{n-1}{2} u}$ times a canonical equivalence. While the notion of linear equivalence seems a priori to be a much more general relation, it turns out that this conformal scaling is the only thing this notion adds, as the following proposition shows.
\end{remark}

\begin{proposition}\label{prop: linear vs canonical equivalence}
        Suppose that $\Theta_{g,A,\sU}$ and $\Theta_{g',A',\sU}$ are linearly equivalent via a vector bundle isomorphism $F : \bS|_{\sU} \to \bS'|_{\sU}$. Then there is an isomorphism $\hat{Q} : \bS|_{\sU} \to \bS'|_{\sU}$ induced by a map of orthonormal frame bundles over $\sU$ and a smooth non-vanishing function $f \in C^{\8}(\sU)$ such that
        \begin{equation}
            F = f \hat{Q}.
        \end{equation}
        Moreover, $g'|_{\sU} = e^{2u} g|_{\sU}$ for some $u \in C^{\8}(\sU)$. If $n \geq 3$, then $f = e^{-\frac{n-1}{2} u}$ up to a constant.
\end{proposition}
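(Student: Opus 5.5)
We compare symbols. Both BC maps are $0$-th order pseudodifferential operators (Proposition \ref{prop: principal symbol of BC map}), and $F$ (tensored with $\chi$) is a smooth bundle map, i.e.\ a $0$-th order multiplication operator. Thus the equivalence $\Theta' = (F\otimes\chi)\Theta(F\otimes\chi)^{-1}$ gives, at the level of principal symbols,
\begin{equation*}
    \frac{\gamma'(\x)}{|\x|_{g'}} = F\,\frac{\gamma(\x)}{|\x|_g}\,F^{-1}, \qquad \x \in T^*_x\sU\setm 0,
\end{equation*}
with $\chi$ dropping out because Clifford multiplication acts only on the $\bS$ factor. The first step is the conformality of $g$ and $g'$ on $\sU$. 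I would take anticommutators as in Remark \ref{remark: principal symbol determines conformal class}: conjugation by $F$ preserves them, so $g(\x,\eta)/(|\x|_g|\eta|_g) = g'(\x,\eta)/(|\x|_{g'}|\eta|_{g'})$ for all covectors. Hence the $g$- and $g'$-angles coincide, so $g'|_{\sU} = e^{2u}g|_{\sU}$ for a smooth $u$, since an angle-preserving linear map is a similarity.

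Next I would produce $\hat Q$. Let $\hat G:\bS|_{\sU}\to\bS'|_{\sU}$ be induced by the frame map $e_i\mapsto e^{-u}e_i$, restricted to the boundary. Then $\hat G\gamma(\x)\hat G^{-1}|\x|_g^{-1} = \gamma'(\x)|\x|_{g'}^{-1}$, because rescaling the metric and the Clifford map cancel in the normalized symbol. The map $T := \hat G^{-1}F$ therefore commutes with $\gamma(\x)$ for every tangential covector $\x$, and by the assumption that $F$ intertwines with the boundary projectors, it also preserves the $\pm i$-eigenbundles of $\gamma(\nu)$. Hence $T$ commutes with all of $\gamma(T^*M|_{\sU})$, i.e.\ with the full Clifford action $\Cl_n$. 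For $n$ even the representation is irreducible, so Schur gives $T = f\cdot\id$. For $n$ odd, $\Cl_n \cong \End\Delta^+\oplus\End\Delta^-$ but only one summand acts, so again $T$ is scalar. Here I must double-check that only tangential $\x$ appear, generating $\Cl_{n-1}$; if $F$ is required to map $\bW^\pm\to\bW'^\pm$ then $\gamma(\nu)$ is included, and I would argue it this way. The result is $F = f\hat G$ with $\hat Q = \hat G$ and $f$ smooth and nonvanishing.

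The final step, for $n\geq 3$, uses the lower-order symbols and is the main obstacle. By \eqref{conformal change of BC map equivalence}, $\Theta_{g'}$ is equivalent via $\hat G$ to $e^{\frac{n-1}{2}u}\Theta_{\bar g}e^{-\frac{n-1}{2}u}$ with $\bar g = e^{2u}g$. Combining this with $F = f\hat G$ gives that $\Theta_{\bar g}$ commutes with multiplication by the function $h := f e^{\frac{n-1}{2}u}$, up to the frame identification on the two sides. The commutator $[\Theta_{\bar g}, h]$ has principal symbol $-i\{\vartheta_0, h\}$, or more precisely $-i\,\d_\x\vartheta_0\cdot d_x h$, which must vanish. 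Since $\d_\x(\gamma(\x)/|\x|)$ applied to $dh$ is $\gamma(dh)/|\x| - \gamma(\x)\langle\x,dh\rangle/|\x|^3$, this vanishes for all $\x$ only if $dh=0$. This requires choosing $\x$ independent of $dh$, which is possible exactly when $\dim\sU = n-1\geq 2$. Therefore $h$ is locally constant, so constant on connected $\sU$, giving $f = c\,e^{-\frac{n-1}{2}u}$. When $n=2$, $\x$ and $dh$ are always parallel in a $1$-dimensional cotangent space and the argument fails, consistent with the statement. The delicate point is justifying that the two BC maps (for $\bar g$ and $g'$, both in the same canonical frames) coincide, so that the commutator really is zero rather than merely being identified up to lower order.
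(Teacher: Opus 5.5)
\textbf{Steps 1--2.} These match the paper. Intertwining the principal symbols $\gamma(\x)/|\x|$ and taking anticommutators gives $g'|_{\sU}=e^{2u}g|_{\sU}$. Schur's lemma applied to $\hat Q^{-1}F$ then gives $F=f\hat Q$. Two small corrections:
\begin{itemize}
\item Since $g$ and $g'$ live on different manifolds and $u$ exists only on $\sU$, the frame map must be $(e_a,\nu)\mapsto(e^{-u}e_a,\nu')$, not $e_i\mapsto e^{-u}e_i$.
\item $F(\bW^{\pm})=\bW'^{\pm}$ is not an extra hypothesis. It is forced by the domains and ranges in $F\Theta F^{-1}=\Theta'$, as the paper notes.
\end{itemize}

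\textbf{The gap is in the last step.} Extend $u$ into $M$ and set $\bar g=e^{2u}g$. Then \eqref{conformal change of BC map} and $F=f\hat Q$ give $\hat J^{-1}\Theta_{g'}\hat J=h\,\Theta_{\bar g}\,h^{-1}$, where $h=fe^{\frac{n-1}{2}u}$ and $\hat J$ is the canonical identification over $\sU$ (available because $\bar g|_{\sU}=g'|_{\sU}$). To get $[\Theta_{\bar g},h]=0$ you would need $\hat J^{-1}\Theta_{g'}\hat J=\Theta_{\bar g}$, and this is false in general. The metric $\bar g$ lives on $M$ and $g'$ on $M'$; they share only their boundary metric, so their BC maps agree only at principal order.

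At order $-1$ you get $\vartheta^{g'}_{-1}-\vartheta^{\bar g}_{-1}=i\,\d_\x\vartheta_0\cdot d\log h$. The left side depends on the normal derivatives of the two metrics, which is exactly what Theorem \ref{thm: boundary determination inductive form} exploits. Nothing in your argument prevents this difference from cancelling the commutator term. So ``the delicate point'' you flag is not a technicality: the claimed equality simply does not hold.

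\textbf{The missing idea is a parity splitting in $\x$.} The term $\d_\x\vartheta_0\cdot dh$ is even under $\x\mapsto-\x$. The second fundamental form enters only the odd part of $\vartheta_{-1}$. Hence the even part $(\vartheta_{-1})^{\fe}$ is governed by the boundary metric and frame alone.

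The paper uses this by importing from \cite{Valero2025} the explicit transformation law \eqref{appendix L equiv subprincipal comparisson} for $(\vartheta_{-1})^{\fe}$ under $g_2|_{\sU}=e^{2u}g_1|_{\sU}$. It then matches the coefficients of $\x_\a\x_\b$ and contracts with $(g_1)_{\a\b}$ to obtain $dv=-\frac{n-1}{2}du$.

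If you add the qualitative fact that $(\vartheta_{-1})^{\fe}$ depends only on $g|_{\sU}$, your route closes and is arguably cleaner. Because $\bar g|_{\sU}=g'|_{\sU}$, the even parts cancel, leaving $\d_\x\vartheta_0\cdot dh=0$. Your choice of $\x\perp dh$ then gives $dh=0$ for $n\geq 3$, with no need for the tensor $\cT^{\a\b}$. But that fact comes from the symbol computation, and your proposal does not supply it.
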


We defer the proof of Proposition \ref{prop: linear vs canonical equivalence} to Appendix \ref{sec: appendix L}. Throughout this paper, we will mostly stick with the more usual notion of canonical equivalence, though we refer to Remark \ref{remark: linear equivalence extension result} below for an example of how some results are changed if one uses the notion of linear equivalence instead of canonical equivalence to compare BC maps, especially in dimension $2$.


\vspace{10pt}
\subsection{Locally conformally real-analytic manifolds}\label{subsec: LCRA}

We first give the definition of an LCRA manifold that we shall use:

\begin{definition}\label{def: LCRA manifold}
    \begin{enumerate}[(a)]
        \item A {\em locally conformally real-analytic} (LCRA) manifold {\em without boundary} is a Riemannian manifold $(M,g)$ without boundary such that about every point $p \in M$ there exists a coordinate chart $\phi : U \to \Omega \subseteq \R^n$ in which the metric $g$ takes the form
        \begin{equation}\label{LCRA metric form interior}
            g_{ij} = e^{2\alpha(x)} h_{ij}(x),
        \end{equation}
        where $\alpha \in C^{\8}(\Omega)$ and $h_{ij} \in C^{\omega}(\Omega)$ is a real-analytic metric on $\Omega$.

        \item A {\em locally conformally real-analytic manifold} {\em with boundary} is a Riemannian manifold $(M,g)$ with boundary whose interior is an LCRA manifold in the above sense, and moreover about any boundary point $p \in \d M$ there exists a boundary coordinate chart $\phi : U \to \overline{\Omega} \sub \R^n_{+}$ with $\phi(\d M \cap U) = \Gamma := \overline{\Omega} \cap \{ x_n = 0 \}$ in which the metric takes the form
        \begin{equation}\label{LCRA metric form boundary}
            g_{ij} = e^{2\alpha(x',x_n)} h_{ij}(x',x_n),
        \end{equation}
        where $\alpha \in C^{\8}(\overline{\Omega})$ and $h_{ij} \in C^{\omega}(\overline{\Omega})$ is real-analytic {\em up to the boundary} $\Gamma$. 
    \end{enumerate}
\end{definition}

\begin{remark}\label{remark: LCRA definition is more general}
    The above definition is slightly more general than the one given in \cite{LTS2022}, wherein it is required that the function $\a$ in \eqref{LCRA metric form boundary} additionally satisfies $\alpha(x',0) \in C^{\omega}(\Gamma)$. For the purposes of the present paper, the more general Definition \ref{def: LCRA manifold} shall suffice.
\end{remark}
\vspace{5pt}

By an LCRA coordinate chart, we mean any coordinate chart in which the metric $g$ takes the form \eqref{LCRA metric form interior} or \eqref{LCRA metric form boundary}; the latter we sometimes refer to as an LCRA boundary chart. Note that in Definition \ref{def: LCRA manifold}, we do not assume that $M$ has a real-analytic structure. However, it does indeed turn out that the interior of $M$ can be endowed with a real-analytic structure, which moreover contains LCRA coordinate charts in its atlas \cite[Proposition A.1]{LTS2022}:

\begin{proposition}\label{prop: LCRA manifolds have analytic structure}
    An LCRA manifold without boundary can be endowed with a real-analytic structure compatible with the original smooth structure. This real-analytic structure can be given by transforming from LCRA coordinates to harmonic coordinates. The metric continues to have the form \eqref{LCRA metric form interior} in the coordinates of the new atlas. 
\end{proposition}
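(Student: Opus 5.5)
The plan is to show that the metric in an LCRA chart, after passing to suitable coordinates, admits an atlas whose transition maps are real-analytic. The natural candidates are harmonic coordinates for the real-analytic representative metric $h$, not for $g$ itself.

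First I would fix, about a point $p$, an LCRA chart $\phi : U \to \Omega$ with $g = e^{2\alpha} h$ and $h$ real-analytic on $\Omega$. By standard elliptic theory for $\Delta_h$ (a second-order elliptic operator with real-analytic coefficients), I can solve $\Delta_h y^k = 0$ near $\phi(p)$ with $y^k(x) = x^k + O(|x - \phi(p)|^2)$. Concretely, one prescribes Dirichlet data on a small ball; for instance, $y^k = x^k - w^k$, with $w^k$ solving $\Delta_h w^k = \Delta_h x^k$ and $w^k = 0$ on the boundary, is small in $C^{1}$ on a small enough ball. By the Morrey/Petrowsky analytic regularity theorem, these $h$-harmonic functions are real-analytic in $x$. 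Hence $x \mapsto y$ is a real-analytic diffeomorphism near $\phi(p)$ by the inverse function theorem. In the $y$ coordinates, $h$ is still real-analytic, being the pushforward under a real-analytic map. Also $g = e^{2\tilde\alpha} \tilde h$ with $\tilde\alpha = \alpha \circ x(y)$ smooth, so the form \eqref{LCRA metric form interior} is preserved.

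The main step is that the transition maps between two such charts are real-analytic. Let $(y, e^{2\alpha_1} h_1)$ and $(z, e^{2\alpha_2} h_2)$ be overlapping charts. On the overlap, $h_2$ pulled back equals $e^{2(\alpha_1 - \alpha_2)} h_1$, so $h_1$ and $h_2$ are conformal with a merely smooth factor $e^{2\beta}$. This is the obstacle: $z$ is $h_2$-harmonic, but not obviously $h_1$-harmonic.

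To handle it, I would show the conformal factor is itself real-analytic, which makes the transition real-analytic by the following argument. In dimension $n \geq 3$, the conformal change law for the conformal Laplacian, or equivalently the scalar curvature equation $S_{h_2} = e^{-2\beta}(S_{h_1} - 2(n-1)\Delta\beta - (n-2)(n-1)|d\beta|^2)$, gives a nonlinear elliptic PDE satisfied jointly by the map and the factor. Rather than that route, I would use the harmonic-map formulation. The identity map $(U, h_1) \to (U, h_2)$, written $z = z(y)$, satisfies the harmonic-map-type system obtained from $\Delta_{h_2} z^k = 0$ rewritten in terms of $h_1$ and $\beta$. Combining this with the defining relation $z^* h_2 = e^{2\beta} h_1$ yields an elliptic system, in the Douglis–Nirenberg sense, for $(z, \beta)$ with real-analytic coefficients. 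Morrey's analytic regularity theorem for nonlinear elliptic systems then gives analyticity of $z(y)$.

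In dimension $2$, conformal maps between real-analytic metrics are holomorphic in isothermal coordinates, and those coordinates depend analytically on $h_i$, which yields analyticity directly. Since the statement is quoted from \cite[Proposition A.1]{LTS2022}, I would ultimately defer the delicate elliptic-system regularity there. I expect verifying the ellipticity of that coupled system to be the hardest point.
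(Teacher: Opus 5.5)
Your reduction is correct. Building $h$-harmonic charts is routine, and the whole content of the statement is that a transition map $z=z(y)$ between two such charts is real-analytic. Such a map is a smooth diffeomorphism with $z^*h_2=e^{2\beta}h_1$, where $h_1,h_2$ are real-analytic but $\beta$ is only smooth. The paper gives no argument of its own here; it simply refers to \cite[Proposition A.1]{LTS2022}.

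This is exactly the step you leave unproved, and as formulated it does not yet go through. The equations $\Delta_{h_1}z^k+(n-2)\langle d\beta,dz^k\rangle_{h_1}=0$ together with the full relation $z^*h_2=e^{2\beta}h_1$ give $n+\tfrac{n(n+1)}{2}$ equations for $n+1$ unknowns. That is not a square Douglis--Nirenberg system, so Morrey's theorem cannot be applied to it as it stands. You also assert ellipticity rather than check it. Deferring this to the citation leaves your argument without its only nontrivial step.

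The step can be closed by a short computation. Taking the $h_1$-trace of the conformal relation gives $n e^{2\beta}=Q:=h_1^{ij}(h_2)_{kl}(z)\,\d_iz^k\,\d_jz^l$. Hence $\beta=\tfrac12\log(Q/n)$ is an analytic function of $(y,z,\d z)$, and eliminating it yields the determined quasilinear system
\begin{equation*}
\Delta_{h_1}z^k+\frac{n-2}{2Q}\,h_1^{ab}\,\d_aQ\,\d_bz^k=0,\qquad k=1,\dots,n,
\end{equation*}
which is analytic in $(y,z,\d z,\d^2 z)$.

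Write $w^k:=h_1^{ab}\xi_a\d_bz^k$ and $v_m:=(h_2)_{ml}w^l$. The linearized principal symbol is $-|\xi|_{h_1}^2\,\id-\frac{n-2}{Q}\,w\,v^{T}$. Since $v^{T}w=|dz(\xi^\sharp)|_{h_2}^2=e^{2\beta}|\xi|_{h_1}^2$ and $Q=ne^{2\beta}$, its determinant is $(-|\xi|_{h_1}^2)^n\,\frac{2(n-1)}{n}\neq0$. So the system is elliptic for every $n\geq2$, and Morrey's theorem gives analyticity of $z(y)$. Alternatively, you can keep $\beta$ as an unknown and adjoin only the trace equation; this also gives a square, Douglis--Nirenberg elliptic system.

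Two simplifications are worth noting. For $n=2$ the coupling term vanishes, so $z^k$ is $h_1$-harmonic outright and the isothermal-coordinate detour is unnecessary. For $n\geq3$ you could avoid the nonlinear system entirely by using $n$-harmonic coordinates. The equation $\delta_g(|du|_g^{n-2}du)=0$ is conformally invariant, so, granted the existence of such coordinates, the same argument as in dimension two makes the transition maps analytic.
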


We shall often assume that the interiors of our LCRA manifolds with boundary are endowed with the real-analytic structure given by Proposition \ref{prop: LCRA manifolds have analytic structure}. Doing so avoids the need to constantly work in coordinates when employing arguments that rely on analyticity.

We also have the following result, which says that if $M$ is an LCRA manifold with boundary such that $\alpha(x',0) \in C^{\omega}(\Gamma)$ in \eqref{LCRA metric form boundary} (that is, if $M$ is an LCRA manifold in the sense of \cite{LTS2022}), then the boundary $\d M$ inherits a real-analytic structure \cite[Proposition A.2]{LTS2022}: 

\begin{proposition} \label{prop: LCRA metric gives analytic structure on boundary}
    Suppose that $g$ is an LCRA metric such that in an LCRA boundary chart, the function $\alpha(x',x_n)$ given in \eqref{LCRA metric form boundary} additionally satisfies $\alpha(x',0) \in C^{\omega}(\Gamma)$. Then $\d M$ inherits a real-analytic atlas induced by harmonic coordinate charts for $g|_{\d M}$. The transition functions from coordinates on $\d M$ induced by LCRA coordinates to harmonic coordinates for $g|_{\d M}$ are real-analytic.  
\end{proposition}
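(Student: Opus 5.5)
The plan is to reduce to the known interior result, Proposition \ref{prop: LCRA manifolds have analytic structure}, by a reflection/extension argument. Since $\alpha(x',0) \in C^{\omega}(\Gamma)$, we can remove the boundary conformal factor analytically. Fix an LCRA boundary chart $\phi : U \to \overline{\Omega}$ with $g_{ij} = e^{2\alpha} h_{ij}$ and $h$ real-analytic up to $\Gamma$. The pullback metric on the boundary is
\begin{equation}
    g|_{\Gamma} = e^{2\alpha(x',0)} h_{ab}(x',0)\, dx^a dx^b, \qquad 1 \le a,b \le n-1 .
\end{equation}
Both $\alpha(x',0)$ and $h_{ab}(x',0)$ are real-analytic on $\Gamma$. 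Hence the induced boundary coordinates $x'$ present $g|_{\d M}$ as a genuinely real-analytic metric on the $(n-1)$-manifold $\d M$, with no conformal factor left.

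\textbf{Step 1: analytic atlas on $\d M$.} For each such chart, consider the real-analytic Riemannian metric $k := g|_{\Gamma}$ on the open set $\Gamma \sub \R^{n-1}$. Near any point, solve $\Delta_k y^j = 0$ with prescribed value and differential, giving harmonic coordinates $y = (y^1,\dots,y^{n-1})$ for $k$. The Laplacian $\Delta_k$ is an elliptic operator with real-analytic coefficients in the $x'$ coordinates. By analytic elliptic regularity (Morrey–Nirenberg), the functions $y^j(x')$ are therefore real-analytic, so the map $x' \mapsto y$ is a real-analytic diffeomorphism. This gives the last claim of the proposition, that LCRA-induced-to-harmonic transition maps are analytic. (If $n-1 = 1$, harmonic coordinates are just arclength, obtained by integrating $e^{\alpha}\sqrt{h_{11}}$, which is analytic.)

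\textbf{Step 2: compatibility of harmonic charts.} Let $y$ and $\tilde y$ be two harmonic coordinate systems for $g|_{\d M}$ with overlapping domains. Each $\tilde y^j$ is $g|_{\d M}$-harmonic, so in the $y$ coordinates it satisfies $\Delta \tilde y^j = 0$. The coefficients of this Laplacian are analytic in $y$, because the metric components in $y$ are obtained from analytic data through the analytic change of variables of Step 1. Analytic elliptic regularity again shows that $y \mapsto \tilde y$ is real-analytic. The harmonic charts therefore form a real-analytic atlas compatible with the smooth structure, and Step 1 shows it is also compatible with all the LCRA-induced boundary charts.

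\textbf{Main obstacle.} The one point needing care is Step 2. There the metric expressed in harmonic coordinates must be seen to be analytic, independent of which LCRA chart was used to construct those coordinates. This follows once Step 1 is established for every LCRA chart, since analyticity is preserved under composition of analytic maps.
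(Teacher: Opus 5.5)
Your proof is correct and is the standard direct argument; the paper gives no proof of its own and simply defers to \cite[Proposition A.2]{LTS2022}. The hypothesis on $\alpha(x',0)$ makes $g|_{\d M} = e^{2\alpha(x',0)}h_{ab}(x',0)\,dx^a dx^b$ real-analytic in LCRA-induced boundary coordinates, so analytic elliptic regularity makes harmonic coordinates for $g|_{\d M}$ analytic functions of these, and the harmonic charts are then mutually analytic by composition. One cosmetic point: your opening sentence announces a reflection/extension reduction to Proposition \ref{prop: LCRA manifolds have analytic structure}, but the argument neither uses nor needs it, so remove that sentence.
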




We refer the reader to Appendix A in \cite{LTS2022} for proofs of the above propositions. For simplicity, we shall henceforth assume that the interiors of the LCRA manifolds that we consider are equipped with such a real-analytic structure. This will allow us to speak about analytic objects without the constant need to work in coordinates. Moreover, we have the following:

\begin{proposition}\label{prop: spinor bundle is analytic bundle}
    The spinor bundle of an LCRA manifold can be given the structure of a real-analytic bundle with respect to the real-analytic structure given by Proposition \ref{prop: LCRA manifolds have analytic structure}.
\end{proposition}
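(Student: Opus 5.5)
The plan is to build analytic transition functions for $\bS$ out of canonical spinor frames attached to analytic orthonormal frames, using the analytic atlas of Proposition \ref{prop: LCRA manifolds have analytic structure}. Fix that atlas on the interior of $M$. In each chart $\phi : U \to \Omega$ the metric has the form \eqref{LCRA metric form interior}, $g = e^{2\a} h$ with $h$ real-analytic. The smooth conformal factor $e^{2\a}$ is exactly what obstructs analyticity, so I will rescale it away rather than work with $g$-orthonormal frames directly.

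\textbf{Step 1: analytic frames on each chart.} On $U$, apply Gram--Schmidt to the coordinate frame $\d_1,\dots,\d_n$ with respect to $h$. This produces an $h$-orthonormal frame $(\tilde e_i)$ whose components are real-analytic, since Gram--Schmidt is built from analytic operations on the analytic functions $h_{ij}$. Set $e_i := e^{-\a}\tilde e_i$; these form a $g$-orthonormal frame. By the discussion of the canonical identification $\bS_h \cong \bS_g$ preceding \eqref{spin connection under conformal change}, a canonical spinor frame $(\sigma_\a)$ for $h$ attached to $(\tilde e_i)$ goes to a canonical spinor frame for $g$ attached to $(e_i)$. Shrinking $U$ to be contractible, a choice of lift exists, and I declare $(\sigma_\a)$ to be an analytic local frame of $\bS$.

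\textbf{Step 2: analytic transition functions.} On an overlap $U\cap V$, let the two frames be $e_i = e^{-\a}\tilde e_i$ and $e'_j = e^{-\b}\tilde e'_j$. The $SO(n)$-valued transition matrix is $g(e_i,e'_j) = e^{-\a-\b}g(\tilde e_i,\tilde e'_j)$. Expressing both sides through $h$ on $U$ gives $h(\tilde e_i, e^{\b-\a}\tilde e'_j)$. The key point is that $e^{2\a}h = e^{2\b}h'$ on the overlap. Hence $e^{2(\a-\b)} = h'/h$, a ratio of analytic metrics, which is analytic in the coordinates of either chart (these coordinates are analytically related). So $e^{\a-\b}$ is analytic, and therefore so is the transition matrix $O(x) \in SO(n)$.

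The spinor transition function is a continuous lift $\tilde O : U\cap V \to \Spin(n)$ with $\lambda\circ\tilde O = O$. Since $\lambda$ is a local analytic diffeomorphism of Lie groups, any continuous lift of an analytic map is analytic. Applying the spin representation $\rho$, which is analytic, gives analytic transition functions $\rho\circ\tilde O$ for $\bS$.

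\textbf{Step 3: compatibility.} The frames from Step 1 are smooth frames of the original $\bS$, so the analytic bundle structure is compatible with the smooth one. Clifford multiplication in these frames is given by constant matrices $\gamma(e_i)$. The Hermitian metric is also constant in them.

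\textbf{Main obstacle.} The delicate point is the analyticity of $e^{\a-\b}$ on overlaps, i.e.\ that the conformal factor is well defined up to analytic ambiguity. A global issue is irrelevant, since the lifts only need to be continuous on contractible overlaps.
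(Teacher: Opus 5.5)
Your proposal is correct and follows the same route as the paper: take analytic $h$-orthonormal frames in the LCRA charts, pass to the associated canonical spinor frames, and transport them to $\bS_g$ via the canonical conformal identification $\bS_h \cong \bS_g$. Your Step 2 usefully spells out the overlap check that the paper leaves implicit---namely that $e^{2(\a-\b)} = h'/h$ is analytic, so the $\SO(n)$ transition matrices $e^{\a-\b}h(\tilde e_i,\tilde e'_j)$ are analytic (your exponent sign is flipped, which is harmless), and hence so are their continuous lifts to $\Spin(n)$.
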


\begin{proof}
    Let $h$ be a real-analytic metric locally conformal to $g$. We can take as a local trivialization of $\bS_h$ the one induced by a local $h$-orthonormal frame, which is analytic on the base manifold. On the other hand, since $g$ is conformal to $h$, we have that $\bS_g$ is canonically isomorphic to $\bS_h$ (see the discussion at the end of Section \ref{subsec: spinors}). We thus obtain a system of local trivializations of $\bS_g$ whose transition functions are real-analytic on the base.
\end{proof}

When considering twisted Dirac operators, we shall assume that our auxiliary data $(E,\eta,A)$ are real-analytic in the interior of the manifolds considered. That is to say, $E$ has a system of local trivializations whose transition functions are real-analytic with respect to the $C^{\omega}$-structure on $M$ given by Proposition \ref{prop: LCRA manifolds have analytic structure}, and moreover the Hermitian metric $\eta$ and connection $1$-form $A$ are real-analytic in these trivializations. All that remains is to specify the behaviour of $(E,\eta,A)$ at the boundary, which motivates the following definition:

\vspace{5pt}
\begin{definition}\label{def: admissible}
    Let $(M,g)$ be an LCRA manifold with boundary, and let $(E,\eta)$ be a Hermitian bundle over $M$ endowed with a unitary connection $A$. We say that $(E,\eta,A)$ are {\em admissible data} on $(M,g)$ if $(E,\eta,A)$ are real-analytic in the interior of $M$ (with respect to the $C^{\omega}$-structure given by Proposition \ref{prop: LCRA manifolds have analytic structure}), and if the Hermitian metric $\eta$ and connection $1$-form $A$ are real-analytic up to the boundary with respect to a local trivialization of $E$ and an LCRA boundary chart.
\end{definition}

The requirements near the boundary are technical hypotheses on the auxiliary data $(E,\eta,A)$ that ensure our proof of Theorem \ref{main theorem} applies in the setting of twisted Dirac operators. Note that if $A$ is real-analytic up the boundary with respect to an LCRA boundary chart and local trivialization, one can find a real-analytic trivialization in which $A_n = 0$ near a point $p$ on the boundary. Indeed, this amounts to finding a smooth unitary isomorphism $\Phi : E \to E$ that solves
\begin{equation}
    \begin{cases}
        \d_n \Phi = [\Phi, A_n], \\
        \Phi|_{x_n = 0} = \id_E,
    \end{cases}
\end{equation}
in LCRA coordinates near $p$. In particular, since $A_n$ is analytic in these coordinates, so too is the solution $\Phi$ in a neighbourhood near $p$ by the Cauchy--Kovalevskaya theorem \cite[\S 16.4]{Taylor2011}. Applying the gauge transformation $\Phi$ then gives us a trivialization in which $A_n = 0$ near $p$. The orthonormal frame $(v_i)$ of $E$ corresponding to such a trivialization then satisfies $\nabla_{\d_n} v_i = 0$ near $p$. We call such a frame a {\em boundary normal frame} for $E$, and shall often work in such a frame near the boundary.

\vspace{10pt}
\section{Proof of the main result}\label{sec: proof full section}

In this section we prove Theorem \ref{main theorem}. First, in Section \ref{sec: extension}, we show that one can conformally rescale two metrics with equivalent BC maps in such a way that both metrics can be analytically extended past $\sU \sub \d M$ in boundary normal coordinates; further, the extended metrics are equal in the extended region. If our Dirac operators are twisted by admissible data, then the bundles and connections can be extended as well. We then show in Section \ref{sec: Greens functions determined} that the Green's kernels for the Dirac operators are equal in the extended region. Finally, in Section \ref{sec: reconstruction}, we show that we can embed both manifolds into a common Sobolev space using conformally rescaled Green's kernels. We finish the proof by showing that these embeddings have common image, and can be used to construct a conformal isomorphism between the two manifolds.


\vspace{10pt}
\subsection{Extending the manifolds in an analytic conformal gauge}\label{sec: extension}

As noted in Remark \ref{remark: principal symbol determines conformal class}, the principal symbol of the BC map determines the conformal class of the pullback metric on the boundary. In fact, much more is true: as was shown in \cite[Theorem A]{Valero2025}, in dimensions $n \geq 3$ the BC map determines the metric on the boundary up to a locally constant conformal factor. Moreover, if the mean curvature $H$ and all of its normal derivatives are given, then the BC map determines the Taylor series of the metric in boundary normal coordinates. Note that $H$ and its normal derivatives can be fixed by a conformal scaling. We quote here the following inductive form of Theorem A from \cite{Valero2025} that we shall use below:

\begin{theorem}\label{thm: boundary determination inductive form}
    Let $(M_1, g_1)$ and $(M_2, g_2)$ be compact spin manifolds with boundary of dimension $n \geq 3$ that are identified along $\sU \sub \d M_1,  \d M_2$. Suppose that $\Theta_{g_1, A_1, \sU} \sim \Theta_{g_2, A_2, \sU}$. Then $g_1|_{\sU}$ and $g_2|_{\sU}$ are related by a locally constant conformal factor. Moreover, let $H_i(x',x^n)$ denote the mean curvature of surfaces of constant $x^n$ is boundary normal coordinates for $g_i$ near $\sU$. Then, if
    \begin{equation}\label{derivatives of mean curvatures equal}
        \d_{n_1}^{k} H_1|_{\sU} = \d_{n_2}^{k} H_2|_{\sU}
    \end{equation}
    for all $0 \leq k \leq N$, it follows that
    \begin{equation}
        \d_{n_1}^{N+1} g^{\a \b}_1|_{\sU} = \d^{N+1}_{n_2} g^{\a \b}_2|_{\sU}.
    \end{equation}
    where $n_i$ denotes the normal coordinate with respect to $g_i$. Moreover, if \eqref{derivatives of mean curvatures equal} holds we also have
    \begin{equation}
        \d_{n_1}^{\l} A_1|_{\sU} = \d_{n_2}^{\l} A_2|_{\sU}
    \end{equation}
    for $0 \leq \l \leq N+1$, in trivializations of $E_1$ and $E_2$ where $(A_1)_n = (A_2)_n = 0$ near $\sU$.
\end{theorem}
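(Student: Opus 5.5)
This theorem is the inductive form of Theorem A of \cite{Valero2025}, which treats chiral rather than MIT boundary conditions. The plan is therefore to transplant that boundary-determination argument to the MIT setting, where only the symbol calculus changes. The approach is the usual one of Lee--Uhlmann type. Work in boundary normal coordinates $(x',x^n)$ for $g_i$ near a point of $\sU$. Choose a $g_i$-orthonormal frame adapted to the boundary, with $e_n=\d_n$ parallel along normal geodesics, and use the induced canonical spinor frame together with a boundary normal frame for $E_i$, so that $(A_i)_n=0$. In these frames, factor the Dirac operator as
\[
D_A=\gamma(\nu)\Bigl(\d_n - \tfrac{1}{2}H + \gamma(\nu)\widetilde{D}_{x^n}\Bigr),
\]
where $\widetilde{D}_{x^n}$ is a first-order tangential operator on the level sets $\{x^n=\text{const}\}$, built from the induced metric, the second fundamental form and $A$.

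Following \cite{Valero2025} (as indicated before Proposition \ref{prop: principal symbol of BC map}), I would construct a pseudodifferential factorization. Find a family $Q(x^n)$ of classical order-one pseudodifferential operators in $x'$ with
\[
D_A \equiv \gamma(\nu)\bigl(\d_n - \tfrac{1}{2}H + Q(x^n)\bigr)\bigl(\text{harmless factor}\bigr) \pmod{\text{smoothing}},
\]
such that harmonic spinors satisfy $\d_n\psi = (\tfrac{1}{2}H - Q)\psi$ near the boundary, modulo smoothing. The MIT Cauchy data $\bP^-\psi|_{\d M} = \Theta\,\bP^+\psi|_{\d M}$ then identify the full symbol of $\Theta$ with an explicit algebraic expression in the symbol $q$ of $Q(0)$, restricted to $\bW^\pm$. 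Here $\gamma(\nu)$ acts by $\pm i$ on $\bW^{\mp}$, so $\Theta$ is a Cayley-type transform of $q$. This relation is invertible at the symbol level, so $\Theta_{g_1}\sim\Theta_{g_2}$ gives equality of the full symbols of $Q_1(0)$ and $Q_2(0)$ in the identified frames. The equivalence map $\hat F\otimes\chi$ is a frame map and hence only relabels the frames.

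Next, derive the Riccati-type recursion for the symbol terms $q_{1-j}(x',x^n,\x')$. Plugging the factorization into $D_A^2$, or directly into $D_A$, gives an equation of the form
\[
\d_n Q = Q^2 - \widetilde{D}^2 + (\text{terms in } H,\ \d_n H) \text{ acting on } Q.
\]
Expanding it in homogeneity shows that $q_{1-j}|_{x^n=0}$ equals a universal expression in
\[
\d_n^{j-1}\bigl(\widetilde{D}\text{'s symbol}\bigr)\big|_{x^n=0}, \qquad \d_n^{k}H\big|_{x^n=0}\ (k\le j-1),
\]
plus terms involving only lower normal derivatives. The key computation is to isolate, in $q_{1-j}$, the leading new term
\[
c_j\,\frac{\d_n^{j}g^{\a\b}\,\x_\a\x_\b}{|\x'|^{?}}\,(\cdots)\quad\text{(and } \d_n^{j-1}A_\a\text{ in the lower homogeneity slot)}.
\]
The principal part of $\d_n^{N+1}g^{\a\b}$ then appears in $q_{-N}$ (or $q_{-N-1}$, depending on the indexing) through a coefficient that can be inverted. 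The one place it might fail is the trace part, since traces are tied to $H$; this is exactly why the hypothesis on $\d_n^k H$ enters. I would show that the traceless part of $\d_n^{N+1}g^{\a\b}$ is read off from the symbol, while the trace part is controlled by $\d_n^{N}H$, which is assumed equal. The term $\d_n^{\l}A$ is recovered from the antisymmetric, $\x$-linear part of the next symbol, as in \cite{Valero2025}.

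The base case is Remark \ref{remark: principal symbol determines conformal class}, which gives the conformal class of $g|_\sU$, combined with the subprincipal symbol. The subprincipal symbol contains the spinor connection terms of $\widetilde D$, and these change under a boundary conformal factor $e^{2u}$ by $\gamma(du)$ terms, as in \eqref{conformal change of BC map}. In canonical equivalence these must vanish, forcing $du|_\sU=0$, that is, a locally constant factor; this needs $n\ge3$ for the coefficient $\frac{n-1}{2}$-type combination to be nondegenerate against the tangential Clifford terms. After rescaling by that constant, induction on $N$ gives the statement. The main obstacle is the bookkeeping in the recursion: showing that the new top-order term in $q_{-N}$ is an injective function of the traceless part of $\d_n^{N+1}g^{\a\b}$, modulo terms already known, uniformly for MIT projectors. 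I would handle it by computing in the frozen-coefficient model $\d_n^{N+1}g^{\a\b}=\epsilon\,h^{\a\b}(x')(x^n)^{N+1}$ and taking the first variation, as in \cite{Valero2025}.
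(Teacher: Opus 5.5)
Your plan of transplanting Theorem A of \cite{Valero2025} to MIT conditions is the paper's route. The paper does not reprove the statement; it quotes that theorem and notes that the full symbol of the MIT BC map is formally the chiral one with $\gamma(\nu)$ replaced by $\pm i$ on $\bW^{\mp}$. However, one step of your sketch is false, and the rest of your induction rests on it. Equivalence of the BC maps does \emph{not} give equality of the full symbols of $Q_1(0)$ and $Q_2(0)$, where $Q$ is the Riccati factor of $D_A^2$ described by your equation $\d_n Q = Q^2-\widetilde D^2+\cdots$. By \eqref{conformal change of BC map}, $\Theta$ is unchanged up to canonical equivalence under $g\mapsto e^{2u}g$ with $u|_{\sU}=0$, and all normal derivatives of $u$ remain free. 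But $D_A^2$ is not conformally covariant, so the symbol of its Dirichlet-to-Neumann factor does change; already its order-zero part sees $\d_n u$ through the mean curvature. What $\Theta$ encodes is only, microlocally, the kernel of $Q(0)-\gamma(\nu)\widetilde D(0)$ (the Cauchy data space), not $Q(0)$ itself. Your own recursion exposes the inconsistency. There $q_{1-j}$ sees the full quadratic form $\d_n^{j}g^{\a\b}\xi_\a\xi_\b$, so if $Q(0)$ were determined you would also recover its trace part. The hypothesis on $\d_n^k H$ would then be vacuous, contradicting the gauge freedom just described.

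The induction has to be run on the symbol $\vartheta$ of $\Theta$ itself, computed from $q$ and the symbol of $\widetilde D(0)$. You must show that its new homogeneous term at each step depends on $\d_n^{N+1}g^{\a\b}$ only through the trace-free part, injectively, together with $\d_n^N H$. In other words, the trace contribution entering through $q$ has to combine with the $H$-terms coming from the Dirac equation into a function of $\d_n^N H$. This cancellation, which reflects the conformal gauge freedom, is the real content of the inductive step, so your frozen-coefficient first variation should be carried out on $\vartheta$, not on $q$. A smaller issue of the same kind occurs in the base case. The subprincipal symbols of $\Theta_{g_1,A_1,\sU}$ and $\Theta_{g_2,A_2,\sU}$ also contain the second fundamental forms, which are still unrelated at that stage. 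You must therefore first isolate the $\xi$-even part, where only boundary data and the $\gamma(du)$-terms enter (compare \eqref{appendix L equiv subprincipal comparisson}), before concluding $du|_{\sU}=0$.
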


Note that it does not follow from \cite[Theorem A]{Valero2025} that $\Theta_{g,A,\sU}$ determines $g|_{\sU}$ up to a locally constant conformal factor when the dimension $n = 2$. Moreover, when $n = 2$, it also does not follow from \cite[Theorem A]{Valero2025} the Taylor series of $A$ is determined by the BC map. This is the reason for the assumptions made in Theorem \ref{main theorem} in dimension $2$.

We now prove the following result, which shows that if two metrics have equivalent BC maps, then we can find suitable conformal rescalings of each so that they are real-analytic in boundary normal coordinates and have equal Taylor series on $\sU$. In particular, this will allow us to extend the rescaled metrics analytically to a larger manifold.

\begin{proposition}\label{prop: extension in analytic gauge}
    Let $(M_1, g_1)$ and $(M_2, g_2)$ be $n$-dimensional LCRA manifolds whose boundaries are identified over $\sU \sub \d M_1, \d M_2$, and whose BC maps are equivalent over $\sU$. If $n = 2$, then let us further assume that $g_1|_{\sU} = \lambda g_2|_{\sU}$ for some constant $\lambda$. Then there exist smooth positive functions $c_i$ on $M_i$ with $c_i|_{\sU}$ constant such that $c_i g_i$ is real-analytic up to the boundary in boundary normal coordinates and have equal Taylor series on $\sU$. In particular, there is a diffeomorphism $F : U_1 \to U_2$ near $\sU$ with $F|_{\sU} = \id$ such that $F^*(c_2 g_2) = c_1 g_1$.
\end{proposition}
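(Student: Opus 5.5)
We construct the conformal factors by prescribing mean curvature data, then transfer analyticity and Taylor-series information through Theorem \ref{thm: boundary determination inductive form} and the conformal covariance \eqref{conformal change of BC map equivalence}.

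\textbf{Step 1: an analytic conformal gauge with prescribed mean curvature.} Work locally near a point $p \in \sU$ in an LCRA boundary chart, where $g_i = e^{2\a_i} h_i$ with $h_i$ analytic up to $\Gamma$. Every boundary metric is conformal to an analytic one there, but we need a conformal factor constant on $\sU$. So we seek $c_i = e^{2w_i}$ with $w_i|_{\sU}$ constant, and we choose $w_i$ so that the mean curvatures of the level sets of the normal coordinate of $\tilde g_i := c_i g_i$ vanish identically; call this condition $(\star)$. Since $w$ enters the conformal change $\tilde g = e^{2(w+\a)}h$ only through $v := w + \a_i$, the condition $(\star)$ is a nonlinear PDE in $v$ for the metric $e^{2v}h$. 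This PDE couples $v$ with the boundary normal coordinates of $e^{2v}h$, i.e.\ with the eikonal function $\rho$ solving $|d\rho|_{e^{2v}h}=1$, $\rho|_\Gamma = 0$.

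We view this as a Cauchy problem in $x_n$ for the pair $(v,\rho)$. The zero-mean-curvature condition is a first-order equation in $\d_n v$: under $h \mapsto e^{2v}h$, the mean curvature $H$ changes by a term of the form $e^{-v}\bigl(H_h + (n-1)\d_\nu v\bigr)$. Together with the eikonal equation, this gives an analytic Cauchy--Kovalevskaya system with coefficients analytic up to $\Gamma$. The Cauchy data is $v|_\Gamma = \a_i|_\Gamma + \text{const}$, and this is the main obstacle: $\a_i|_\Gamma$ is only smooth, not analytic (cf.\ Remark \ref{remark: LCRA definition is more general}).

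\textbf{Step 2: handling the non-analytic Cauchy data.} To get around the obstacle, we first replace $h_i$ by $e^{2\a_i(x',0)}h_i$. The metric $g_i|_\Gamma$ is determined up to a constant, and in dimension $n\ge 3$, Theorem \ref{thm: boundary determination inductive form} shows the two boundary metrics agree up to a locally constant factor. We then need a chart in which the boundary metric $g_i|_{\sU}$ is analytic. The plan is to take a boundary chart whose tangential coordinates are harmonic for $g_i|_\Gamma$. The tangential coordinates can be kept but reparametrized, and one argues as in Proposition \ref{prop: LCRA metric gives analytic structure on boundary} that the boundary metric is analytic in its own harmonic coordinates, e.g.\ via an elliptic regularity argument for the conformal structure. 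This step is where I expect real work, and in $n=2$ it is exactly where the extra hypothesis $g_1|_{\sU}=\lambda g_2|_{\sU}$ enters. Granting it, Cauchy--Kovalevskaya yields $v_i$ analytic near $p$. Then $c_i g_i$ is analytic in its boundary normal coordinates (which are themselves analytic), $c_i|_{\sU}$ is constant, and $H_i \equiv 0$. Uniqueness of the Cauchy problem lets the local solutions patch along $\sU$, and we extend $c_i$ smoothly and positively to all of $M_i$.

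\textbf{Step 3: equal Taylor series and the map $F$.} Since $c_i|_{\sU}$ is constant, \eqref{conformal change of BC map equivalence} gives $\Theta_{c_ig_i,A_i,\sU}\sim\Theta_{g_i,A_i,\sU}$, so the rescaled BC maps remain equivalent. We normalize the constants so that $c_1g_1|_{\sU}=c_2g_2|_{\sU}$. By construction all $\d_n^k H_i$ vanish, so Theorem \ref{thm: boundary determination inductive form} applies inductively for every $N$ and gives equal Taylor series of $c_ig_i$ in boundary normal coordinates. In $n=2$ this step instead follows from the hypothesis, since the metric in normal coordinates is determined by its mean-curvature data. Two metrics analytic in boundary normal coordinates with equal Taylor series along $\sU$ coincide near $\sU$. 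Hence $F$, defined as the identity in the respective boundary normal coordinates, satisfies $F^*(c_2g_2)=c_1g_1$ and $F|_{\sU}=\id$.
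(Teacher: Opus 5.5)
\textbf{There is a genuine gap in your Step 2, and it cannot be closed while you insist that $c_i|_{\sU}$ be constant.} Replacing $h_i$ by $e^{2\alpha_i(x',0)}h_i$ only moves the non-analyticity into the metric. The appeal to Proposition \ref{prop: LCRA metric gives analytic structure on boundary} is also not available: that proposition assumes $\alpha(x',0)\in C^{\omega}(\Gamma)$, which is exactly the hypothesis of \cite{LTS2022} that Definition \ref{def: LCRA manifold} drops (Remark \ref{remark: LCRA definition is more general}).

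Under the present definition, $g_i|_{\sU}=e^{2\alpha_i(x',0)}h_i|_{\Gamma}$ with $\alpha_i(x',0)$ an arbitrary smooth function. So the boundary metric need not be real-analytic in any coordinates. For instance, when $n\ge 3$, take $h$ Euclidean and $\alpha(\cdot,0)$ a nonzero bump function; the curvature of $g|_{\sU}$ then vanishes on an open subset of the connected set $\sU$ without vanishing identically. A metric that is analytic up to the boundary in boundary normal coordinates restricts to an analytic metric on $\sU$, and $c_ig_i|_{\sU}$ is a constant multiple of $g_i|_{\sU}$. Hence no factor that is constant on $\sU$ can work in this example, even with $M_1=M_2$ and $g_1=g_2$.

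For $n=2$, the hypothesis $g_1|_{\sU}=\lambda g_2|_{\sU}$ has nothing to do with analyticity. It only replaces the first conclusion of Theorem \ref{thm: boundary determination inductive form}, which is unavailable in dimension $2$.

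So the literal requirement that $c_i|_{\sU}$ be constant cannot be achieved in general. The paper's own construction gives $c_i|_{\sU}$ proportional to $e^{-2\alpha_i|_{\sU}}$, which is not constant. All that is used afterwards is that $c_1|_{\sU}/c_2|_{\sU}$ is constant.

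\textbf{The missing idea} is that \eqref{conformal change of BC map equivalence} preserves equivalence of BC maps under any pair of rescalings $e^{2u_i}$ for which $u_1|_{\sU}-u_2|_{\sU}$ is constant, however non-constant the $u_i|_{\sU}$ themselves are. Both maps are then conjugated by the same multiplier. The paper uses this to rescale first by $e^{-2\beta_i}$ with $\beta_i|_{\sU}=\alpha_i|_{\sU}$ and $\alpha_1|_{\sU}=\alpha_2|_{\sU}$. Then $\tilde g_i$ is represented by the analytic $h_i$, the two boundary metrics coincide and are analytic, and all subsequent Cauchy data are analytic.

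\textbf{With that modification, your Step 1 is a workable alternative}, once you take $v|_{\Gamma}=0$, though not for the reason you state. The formula $e^{-v}\bigl(H_h+(n-1)\d_\nu v\bigr)$ concerns a fixed hypersurface, whereas your level sets move with $v$. Eliminating $v$ through the eikonal equation $e^{v}=|d\rho|_h$, condition $(\star)$ becomes $\Delta_{e^{2v}h}\rho=e^{-nv}\,\mathrm{div}_h\bigl(|d\rho|_h^{n-2}\nabla_h\rho\bigr)=0$. This is the $n$-Laplace equation for $\rho$, which is elliptic where $d\rho\neq 0$. Cauchy--Kovalevskaya then applies with data $\rho|_{\Gamma}=0$ and $\d_{\nu_h}\rho|_{\Gamma}=e^{v|_{\Gamma}}$.

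By comparison, the paper solves the scalar problem \eqref{BVP for conformal factor}, making $\bar g_i$ scalar-flat with $H_{\bar g_i}|_{\sU}=0$. It then uses $S=2(n-1)\d_nH+\cF_1(g,\d_n g)$ and its normal derivatives to propagate only the equalities $\d_{n_1}^kH_{\bar g_1}|_{\sU}=\d_{n_2}^kH_{\bar g_2}|_{\sU}$ through Theorem \ref{thm: boundary determination inductive form}. Your gauge makes all $\d_n^kH$ vanish outright, at the price of a nonlinear degenerate-elliptic Cauchy problem.
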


\begin{proof}
    Let us first prove the proposition for $n \geq 3$. Since $g_1$ and $g_2$ are LCRA metrics, there exist coordinate charts $(U_i, \phi_i)$ for $M_i$ near $\sU \sub M_i$ so that $g_i$ is represented by $e^{2 \alpha_i} h_i$, where $h_i$ is a real-analytic metric. By Theorem \ref{thm: boundary determination inductive form}, we can rescale $g_1$ and $g_2$ by a constant so that $\alpha_1|_{\sU} = \a_2|_{\sU}$. This scaling preserves the equivalence of the BC maps. Now, shrinking $U_i$ if necessary, let $\beta_i \in C^{\8}(M_i)$ be such that $\beta_i|_{U_i} = \alpha_i$ and $\beta_i|_{\d M_i \setm \sU} = 0$ outside of $\sU$. In particular, we have $\b_1|_{\sU} = \b_2|_{\sU}$. Let $\tilde{g}_i := e^{-2\beta_i} g_i$. Since the BC maps of $g_1$ and $g_2$ are equivalent, we have
    \begin{align}
    \Theta_{\tilde{g}_1, A_1, \sU} &\sim e^{-\frac{n-1}{2} \beta_1|_{\sU}}  \Theta_{g_1, A_1, \sU} e^{\frac{n-1}{2} \beta_1|_{\sU}} \nonumber \\
    &\sim e^{-\frac{n-1}{2} \beta_2|_{\sU}} \Theta_{g_2, A_2, \sU} e^{\frac{n-1}{2} \beta_2|_{\sU}} \sim \Theta_{\tilde{g}_2, A_2, \sU}.
\end{align}
    where we have used equation \eqref{conformal change of BC map equivalence}. So the BC maps of $\tilde{g}_1$ and $\tilde{g}_2$ are also equivalent.

    Note that $\tilde{g}_i$ is real-analytic up to the boundary in coordinates $\phi_i$, as it is represented by $h_i$. Moreover, we have $\tilde{g}_1|_{\sU} = \tilde{g}_2|_{\sU}$. By Proposition \ref{prop: LCRA metric gives analytic structure on boundary}, we may choose coordinates $\phi^{\sU}$ on $\sU$ such that $\tilde{g}_1|_{\sU} = \tilde{g}_2|_{\sU}$ are real-analytic in coordinates $\phi^{\sU}$, and moreover the transition functions from $\phi^{\sU}$ to coordinates induced on $\sU$ by $\phi_i$ are real-analytic. Finally, we let $\tilde{\phi}_i$ be boundary normal coordinates for $\tilde{g}_i$ on $U_i$ induced by coordinates $\phi^{\sU}$ on $\sU$. It is easy to see that the transition functions $\tilde{\phi}_i \circ \phi_i^{-1}$ are real-analytic up to the boundary. We shall henceforth work in boundary normal coordinates $\tilde{\phi}_i : U_i \to \Omega$ for $\tilde{g}_i$, which are induced by common coordinates $\phi^{\sU}$ on $\sU$. Note that we assume without loss of generality that $\tilde{\phi}_1(U_1) = \tilde{\phi}_2(U_2) =  \Omega \subseteq \R^n_+$. Let $\Gamma := \Omega \cap \{ x_n = 0 \}$. The metrics $\tilde{g}_i$ are represented in coordinates $\tilde{\phi}_i$ by metrics $\tilde{h}_i \in C^{\omega}(\overline{\Omega})$.

Now, let $u_i \in C^{\8}(\Omega)$ be defined in coordinates $\tilde{\phi}_i$ as the solution to
\begin{equation}\label{BVP for conformal factor}
    \begin{cases}
        &2(n-1)\Delta_{\tilde{h}_i} u_i + (n-1)(n-2) |d u|^2_{\tilde{h}_i} -  S_{\tilde{h}_i} = 0, \\
        &u_i|_{\Gamma} = 0, \\
        &\d_{n_i} u_i |_{\Gamma} = H_{\tilde{h}_i}, 
    \end{cases}
\end{equation}
where $n_i$ is the normal to $\Gamma$ with respect to $\tilde{h}_i$ and $H_{\tilde{h}_i}$ is the mean curvature of $\Gamma$ with respect to $\tilde{h}_i$. Such a solution exists locally by the Cauchy--Kovalevskaya theorem since $\tilde{h}_i$ is real-analytic up to the boundary, and moreover the solution $u_i$ is also real-analytic up to the boundary \cite[\S 16.4]{Taylor2011}. Now let $v_i \in C^{\8}(M_i)$ be a smooth extension of $\tilde{\phi}_i^*u_i$ from $U_i$ to the rest of $M_i$, such that $v_i|_{\d M_i} = 0$ (where we shrink and relabel $U_i$ if necessary). Let $\bar{g}_i := e^{2 v_i} \tilde{g}_i$. We thus have by construction that
\begin{equation}
    \Theta_{\bar{g}_1, A_1, \sU} \sim \Theta_{\bar{g}_2, A_2, \sU}.
\end{equation}
Note that in coordinates $\tilde{\phi}_i$, the metric $\bar{g}_i$ is real-analytic up to the boundary $\Gamma$, since $u_i$ and $\tilde{h}_i$ are real-analytic up to the boundary in $\Omega$. Moreover, the scalar curvatures satisfy $S_{\bar{g}_i} = 0$ in $U_i$, and also the mean curvatures of $\sU$ with respect to $\bar{g}_i$ satisfy $H_{\bar{g}_i} = 0$. Indeed, the scalar and mean curvatures transform under conformal scaling according to \cite[1.J]{Besse1987}
\begin{equation}
    S_{\bar{g}_i} = e^{-2u_i} \left( S_{g_i} - 2(n-1) \Delta_{g_i} u_i - (n-1)(n-2) |du_i|^2_{g_i} \right),
\end{equation}
\begin{equation}
    H_{\bar{g}_i} = e^{-u_i}\left( H_{g_i} - \d_{n_i} u_i \right).
\end{equation}
Now, note that by the inductive form of the boundary determination result, Theorem \ref{thm: boundary determination inductive form}, since we have $\bar{g}_1|_{\sU} = \bar{g}_2|_{\sU}$ and $H_{\bar{g}_1}|_{\sU} = H_{\bar{g}_2}|_{\sU}$, it follows that  
\begin{equation}
    \d_{n_1} \bar{g}_1|_{\sU} = \d_{n_2} \bar{g}_2|_{\sU}
\end{equation}
in their respective boundary normal coordinates (here $n_i$ denotes the normal with respect to $\bar{g}_i$). It also follows from Theorem \ref{thm: boundary determination inductive form} that we have
\begin{equation}
    A_1|_{\sU} = A_2|_{\sU}, \ \ \ \ \ \d_{n_1} A_1|_{\sU} = \d_{n_2} A_2|_{\sU}, 
\end{equation}
with respect to boundary normal frames for $E_1$ and $E_2$.

Now note that in boundary normal coordinates for $g$, the scalar curvature $S_g$ takes the form
\begin{align}
    S_{g} &= g^{ij} \left( \d_{k} \Gamma^k_{\ ij} - \d_{j} \Gamma^k_{\ ik} + \Gamma^k_{\ ij} \Gamma^{\l}_{\ \l k} - \Gamma^{k}_{\ i \l} \Gamma^{\l}_{\ j k} \right) \nonumber \\
    &= -\d_n \Gamma^{\a}_{\ n \a} + g^{\a \b} \d_n \Gamma^n_{\ \a \b} + \cF_1\left( g, \d_n g \right) \nonumber \\
    &= 2 (n-1) \d_n H + \cF_1\left( g, \d_n g \right),
\end{align}
where $\cF_1$ is a scalar quantity depending tensorially on its arguments. Therefore, since $S_{\bar{g}_i} = 0$,
\begin{align}
    \d_{n_1} H_{\bar{g}_1}|_{\sU} &= -\frac{1}{2(n-1)} \cF_1\left( \bar{g}_1|_{\sU}, \d_{n_1} \bar{g}_1 |_{\sU} \right) \nonumber \\
    &= -\frac{1}{2(n-1)} \cF_1\left( \bar{g}_2|_{\sU}, \d_{n_2} {\bar{g}_2}|_{\sU} \right) = \d_{n_2} H_{\bar{g}_2}|_{\sU}.
\end{align}
From Theorem \ref{thm: boundary determination inductive form}, it then follows that 
\begin{equation}
    \d^2_{n_1} {\bar{g}_1}|_{\sU} = \d^2_{n_2} {\bar{g}_2}|_{\sU}, \ \ \ \ \ \d^2_{n_1} A_1|_{\sU} = \d^2_{n_2} A_2|_{\sU},
\end{equation}
in boundary normal coordinates. Now we proceed by induction. Suppose that
\begin{equation}
    \d^{k+1}_{n_1} {\bar{g}_1}|_{\sU} = \d^{k+1}_{n_2} {\bar{g}_2}|_{\sU}, \ \ \ \ \ \d^{k+1}_{n_1} A_1|_{\sU} = \d^{k+1}_{n_2} A_2|_{\sU},
\end{equation}
for all $k \leq N$. Differentiating the above expression for the scalar curvature, we find
\begin{align}
\d^N_{n_i} S_{\bar{g}_i} &= 2(n-1) \d^{N+1}_{n_i} H_{\bar{g}_i} + \cF_{N+1}\left( \bar{g}_i, \d_{n_i} {\bar{g}_i}, \dots, \d_{n_i}^{N+1} {\bar{g}_i} \right)
\end{align}
where $\cF_{N+1}$ is a scalar quantity depending tensorially on its arguments. Therefore, by the inductive hypothesis and the fact that $S_{\bar{g}_1} = S_{\bar{g}_2} = 0$ near $\sU$, we have
\begin{align}
    \d^{N+1}_{n_1} H_{\bar{g}_1}|_{\sU} &= -\frac{1}{2(n-1)} \cF_{N+1}\left( \bar{g}_i|_{\sU}, \d_{n_i} {\bar{g}_i}|_{\sU}, \dots, \d_{n_i}^{N+1} {\bar{g}_i}|_{\sU} \right) \nonumber \\
    &= -\frac{1}{2(n-1)} \cF_{N+1}\left( \bar{g}_i|_{\sU}, \d_{n_i} {\bar{g}_i}|_{\sU}, \dots, \d_{n_i}^{N+1} {\bar{g}_i}|_{\sU} \right) =\d^{N+1}_{n_2} H_{\bar{g}_2}|_{\sU}.
\end{align}
And so, by Theorem \ref{thm: boundary determination inductive form}, it follows that
$$\d_{n_1}^{N+2} {\bar{g}_1}|_{\sU} = \d_{n_2}^{N+2}{\bar{g}_2}|_{\sU}, \ \ \ \ \ \d_{n_1}^{N+2} A_1|_{\sU} = \d_{n_2}^{N+2} A_2|_{\sU}.$$
As this holds for all $N$, it follows that the Taylor series of the rescaled metrics $\bar{g}_1$ and $\bar{g}_2$ in their respective boundary normal coordinates are equal, as are the Taylor series of $A_1$ and $A_2$. Proposition \ref{prop: extension in analytic gauge} therefore follows with the diffeomorphism $F$ mapping one set of boundary normal coordinates to the other. As both are induced by the same coordinates on $\sU$, we have $F|_{\sU} = \id$. 

Let us now consider the case $n = 2$. In this case, by assumption we have that $g_1|_{\sU} = \lambda g_2|_{\sU}$ for some constant $\lambda$. We can therefore rescale as above to obtain $g_1|_{\sU} = g_2|_{\sU}$ while preserving the equivalence of BC maps. The rest of the proof is as above, except that the BC maps play no role in the boundary determination. That is, we solve the boundary value problem \eqref{BVP for conformal factor} (which amounts here to locally prescribing the Gaussian curvature and geodesic curvature of the boundary) just as in the case $n \geq 3$, from which it follows inductively that the rescaled metrics have equal Taylor series on the boundary and equivalent BC maps (alternatively, one could also appeal to the fact that after conformal rescaling, both metrics are isometric to the Euclidean metric on $\R^2$).
\end{proof}

\begin{remark}\label{remark: linear equivalence extension result}
    Note that if we were to require linear equivalence of BC maps in dimension $2$, then we would not need the assumption that $g_1|_{\sU} = \lambda g_2|_{\sU}$ for constant $\lambda$, as we could always conformally rescale our metrics to achieve this, and the notion of linear equivalence is preserved by arbitrary conformal rescalings regardless of their behaviour on the boundary (see Remark \ref{remark: linear equivalence}). On the other hand, little would be lost in higher dimensions, since Proposition \ref{prop: linear vs canonical equivalence} implies that if two metrics have linearly equivalent BC maps, then we can always conformally rescale our metrics on the boundary so that the BC maps become (canonically) equivalent. 
\end{remark}
\vspace{5pt}

With the preceding proposition, we can extend our LCRA manifolds as follows. First, let us conformally rescale our metrics by positive functions $c_i$ as in Proposition \ref{prop: extension in analytic gauge}, so that the new metrics $\bar{g}_i$ are real-analytic in boundary normal coordinates and have equal Taylor series on $\sU$. In particular, after identifying $M_1$ and $M_2$ near $\sU$ via the diffeomorphism $F$ from Proposition \ref{prop: extension in analytic gauge}, we have that $\bar{g}_1 = \bar{g}_2$ near $\sU$. We extend the manifold $M_i$ to a larger manifold $\tilde{M}_i$ by attaching a small ball $B_{\varepsilon} \sub \R^n$ of radius $\varepsilon$ in boundary normal coordinates of $\bar{g}_i$ so that points of $B_{\varepsilon} \cap \R^n_+$ are identified with points in $M_i$. Let $W_i = \tilde{M}_i^{\mathrm{int}} \setm M_i$. By choosing $\varepsilon$ small enough we can assume that $W_1$ and $W_2$ are identified via a diffeomorphism $j : W_1 \to W_2$, which extends the diffeomorphism $F$ from Proposition \ref{prop: extension in analytic gauge}. We can therefore identify $W_1$ and $W_2$ via $j$, and denote the extended region by $W$. We will usually identify $W_1$ and $W_2$ in this way, though sometimes in the interest of clarity we may explicitly recall the diffeomorphism $j : W_1 \to W_2$. 

Shrinking $W$ if necessary, we also have that the metrics $\bar{g}_i$ extend to metrics $\tilde{g}_i$ on $\tilde{M}_i$ such that $\tilde{g}_1|_W = \tilde{g}_2|_{W}$. Moreover, over $W$ we have the identification $j_S : \tilde{\bS}_1|_W \to \tilde{\bS}_2|_W$, which extends the identification of spinor bundles over $\sU$ given by the equivalence of BC maps. The auxiliary bundles $E_i$ also extend to bundles $\tilde{E}_i$ trivially over $W$, such that by identifying boundary normal frames we have an isomorphism $j_E : \tilde{E}_1|_{W} \to \tilde{E}_2|_{W}$ that extends the isomorphism $\chi : E_1|_{\sU} \to E_2|_{\sU}$. Since the connections $A_1, A_2$ are real-analytic in such frames and have equal Taylor series, it follows that they too extend to connections $\tilde{A}_1, \tilde{A}_2$ such that $\tilde{A}_1 = j_E^* \tilde{A}_2$. Note that by construction all of these isomorphisms hold not only over the extended region $W$, but also over a neighbourhood $U$ of $\sU$ in the original manifolds $M_i$, as given by Proposition \ref{prop: extension in analytic gauge}.

We shall use the notation $\cE_i := \bS_i \otimes E_i$ and $\tilde{\cE}_i := \tilde{\bS}_i \otimes \tilde{E}_i$ for the twisted spinor bundles over $M_i$ and $\tilde{M}_i$, respectively. Since $\tilde{\cE}_1|_{W}$ and $\tilde{\cE}_2|_W$ are identified via $j_{\cE} := j_S \otimes j_E$ as described above, we will often denote both simply by $\tilde{\cE}|_W$. We also remark that the $\tilde{M}_i$ is LCRA, and so by Proposition \ref{prop: LCRA manifolds have analytic structure} its interior has a real-analytic structure, which extends the real-analytic structure on the interior of $M_i$. In particular, it makes sense to consider $\tilde{E}_i$, $\tilde{\bS}_i$, and $\tilde{\cE}_i$ as real-analytic bundles over the interior of $\tilde{M}_i$, and speak of real-analytic sections of these bundles.

\vspace{5pt}

\begin{remark}
    It is worth taking a moment to compare the above approach to that found in \cite{LTS2022} for the conformal Laplacian. In \cite{LTS2022}, the metrics are first conformally rescaled so that the quantities in \eqref{derivatives of mean curvatures equal} are null. There is, however, no guarantee that the metrics so obtained will be represented by analytic metrics in boundary normal coordinates; indeed, in general they will not be. Thus, in order to determine the metrics near the boundary, the authors employ a clever approach in which they use their (slightly stronger) hypothesis of local conformal real-analyticity to define what they dub $Z$-coordinates, and work within these to show that the DN map of the conformal Laplacian determines the metrics near the boundary, up to a suitable conformal factor. The approach we have taken above for the Dirac operator, on the other hand, appears to be simpler, more straightforward, and more closely resembles the approach based on extending the manifold that has found much success in other Calder{\'o}n-type problems for analytic metrics \cite{LTU2003, KLU2011, EV2024, Gabdurakhmanov2025, Valero2025}. One reason for this is our judicious use of the conformal symmetry present in our problem: a key idea in the proof of Proposition \ref{prop: extension in analytic gauge} given above is that it suffices to have equality in \eqref{derivatives of mean curvatures equal}, rather than the stronger condition that both sides be $0$. We also point out that since the Dirac operator is first-order, equivalence of BC maps is sensitive only to the boundary values of the conformal factor, but not its higher derivatives. In contrast, equality of DN maps for the conformal Laplacian is preserved by conformal scalings by $u \in C^{\8}(M)$ only if both $u|_{\d M}$ and $\d_n u|_{\d M}$ vanish. Hence, the method of using the second-order boundary value problem \eqref{BVP for conformal factor} to find a suitable conformal scaling that leads to analytic metrics with equal Taylor series is not directly available for the conformal Laplacian, since being able to freely specify the normal derivative of the conformal factor is a key part of the argument. However, since the argument is entirely local, and since the DN map of the conformal Laplacian determines the mean curvature \cite{LTS2022}, a higher-order version of \eqref{BVP for conformal factor} obtained by taking normal derivatives is likely sufficient to proceed with the extension argument given above, and so treat the Calder{\'o}n problem for the conformal Laplacian, as well as other conformally covariant operators, on LCRA manifolds with partial data.


\end{remark}


\vspace{10pt}
\subsection{Conformally rescaled Green's kernels}\label{sec: Greens functions determined}

Having shown in Section \ref{sec: extension} that we can extend suitable conformal representatives of $(M_i, g_i)$ to $(\tilde{M}_i, \tilde{g}_i)$ so that $\tilde{g}_1$ and $\tilde{g}_2$ agree in $\tilde{M}_i \setm M_i$, we now want to consider the MIT Green's kernels for the Dirac operators on the extended manifolds $\tilde{M}_i$ and show that they agree in the extended region. We will then define a rescaled version of these Green's kernels in such a way to obtain a good analytic object that we will need to finish the Proof of Theorem \ref{main theorem} in Section \ref{sec: reconstruction}. 


We first introduce the Green's kernel for the Dirac operator with MIT boundary conditions. Let $\pi_L : M \times M \to M$ and $\pi_R : M \times M \to M$ be the left and right projections, respectively. For vector bundles $\cV$ and $\cW$ over $M$, we define the {\em outer tensor product} $\cV \boxtimes \cW := \pi_L^* \cV \otimes \pi_R^* \cW$ over $M \times M$. In particular, the fiber of $\cE \boxtimes \cE^*$ over $(x,y) \in M \times M$ is equal to $\Hom(\cE_y, \cE_x)$, the space of linear maps from $\cE_y$ to $\cE_y$. 

\begin{definition}
    Let $(M,g)$ be a compact spin manifold with boundary $\d M$, and let $D_A$ be a twisted Dirac operator on $\cE := \bS \otimes E$ over $M$. The {\em Green's kernel} for $D_A$ with MIT$^+$ boundary conditions is the distributional section $G$ of $\cE \boxtimes \cE^*$ over $M \times M$ satisfying
    \begin{equation}\label{Greens kernel BVP definition}
    \begin{cases}
        D_A G(x,y) = \delta_y \id_{\cE_y}, \\
        \bP^+ G(x,y)|_{\d M} = 0,
    \end{cases}
\end{equation}
for fixed $y \in M^{\mathrm{int}}$, where the operators $D_A$ and $\bP^+$ act on the first factor $\cE_x$. One can define a Green's kernel $G^{-}(x,y)$ for MIT$^-$ boundary conditions by replacing $\bP^{+}$ with $\bP^{-}$ in \eqref{Greens kernel BVP definition}.
\end{definition}

Note that by elliptic regularity, $G(x,y)$ is a smooth section of $\cE \boxtimes \cE^*$ away from the diagonal. Moreover, using the Green's identity for the Dirac operator \eqref{Green's identity for D_A}, one can show that the definition of $G(x,y)$ given above is equivalent to requiring that
\begin{equation}
    \int_M \< G(x,y) \psi_1(y), D_A \psi_2(x) \>\, dx = \< \psi_1(y), \psi_2(y) \>
\end{equation}
for all smooth sections $\psi_1, \psi_2$ of $\cE$ such that $\bP^+ \psi_i = 0$ (c.f. \cite{Raulot2011}).

One can show that the conformal covariance of the Dirac operator \eqref{Dirac operator under conformal change exp version} gives rise to a similar conformal covariance in the Green's kernel. Indeed, we have the following \cite{Raulot2011}:
\begin{proposition}\label{prop: Greens kernel under conformal scaling}
    Let $\bar{g} = e^{2u} g$. Then the Dirac Green's kernels are related by
    \begin{equation}\label{Greens kernels under conformal scaling}
    G_{\bar{g}}(x,y) = e^{-\frac{n-1}{2} u(x)} e^{-\frac{n-1}{2} u(y)} \bar{G}_{g}(x,y)
    \end{equation}
    where the bar on the right-hand-side indicates the canonical isomorphism $\bS_{g} \to {\bS}_{\bar{g}}$.
\end{proposition}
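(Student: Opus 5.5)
The plan is to verify directly that the right-hand side of \eqref{Greens kernels under conformal scaling} satisfies the defining boundary value problem \eqref{Greens kernel BVP definition} for $\bar{g}$, and then invoke uniqueness of the Green's kernel. Uniqueness follows from invertibility of $D_A$ with MIT$^+$ boundary conditions: the difference of two candidate kernels, for fixed $y$, is a smooth harmonic spinor in $x$ with $\bP^+$ of its boundary value vanishing, hence zero. Set $K(x,y) := e^{-\frac{n-1}{2}u(x)} e^{-\frac{n-1}{2}u(y)} \bar{G}_g(x,y)$, where the bar acts on both the $x$-factor $\bS_x$ and the dual factor $\bS_y^*$ via the canonical isomorphism. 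Since the canonical isomorphism is induced by $e_i \mapsto e^{-u}e_i$, it is fibrewise unitary; the dual identification $\bS_y^* \to \bar{\bS}_y^*$ is therefore just the inverse-transpose, equivalently the conjugate via the Hermitian metric.

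\textbf{Boundary condition.} Since $\bar{\gamma}(\bar{\nu}) = \gamma(\nu)$ under the identification, the projectors satisfy $\bar{\bP}^+ \bar{\psi} = \overline{\bP^+\psi}$. Scalar factors commute with $\bP^+$, so $\bar{\bP}^+ K(\cdot,y)|_{\d M} = 0$ follows immediately from $\bP^+ G_g(\cdot,y)|_{\d M} = 0$.

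\textbf{Equation.} Fix $y$ and apply \eqref{Dirac operator under conformal change exp version} in the $x$-variable, column by column (the $y$-factor is a constant vector for fixed $y$). This gives
\[
D_{\bar{g}}\bigl(e^{-\frac{n-1}{2}u(x)}\bar{G}_g(x,y)\bigr) = e^{-\frac{n+1}{2}u(x)}\,\overline{D_g G_g(x,y)} = e^{-\frac{n+1}{2}u(x)}\,\delta^g_y \,\id.
\]
The identity \eqref{Dirac operator under conformal change exp version} holds distributionally, because it is a pointwise identity between differential operators with smooth coefficients; one can check this by pairing against test sections, which is where the adjoint form of the covariance enters. The key step is converting the delta function: $\delta^g_y$ is defined relative to $dV_g$, and $dV_{\bar{g}} = e^{nu}\,dV_g$, so $\delta^g_y = e^{nu(y)}\delta^{\bar{g}}_y$. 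Multiplying by the remaining factor $e^{-\frac{n-1}{2}u(y)}$ then yields
\[
e^{-\frac{n+1}{2}u(y)}\, e^{nu(y)}\, e^{-\frac{n-1}{2}u(y)} = e^{0} = 1,
\]
so $D_{\bar{g}}K(\cdot,y) = \delta^{\bar{g}}_y\,\id$.

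\textbf{Main obstacle.} The delicate point is this distributional bookkeeping: one must keep the volume density convention for $\delta_y$ consistent and confirm that the exponents cancel exactly, which is precisely what fixes the symmetric weight $-\frac{n-1}{2}$ at $y$. As a cross-check, I would use the weak formulation $\int_M \langle G(x,y)\psi_1(y), D_A\psi_2(x)\rangle\,dx = \langle\psi_1(y),\psi_2(y)\rangle$. Substituting $\psi_2 = e^{-\frac{n-1}{2}u}\bar{\phi}$ and using \eqref{Dirac operator under conformal change exp version} together with $dV_{\bar g}=e^{nu}dV_g$, the powers of $e^{u}$ should cancel in the same way. The twisted case is identical, since conformal scaling does not touch the $E$ factor.
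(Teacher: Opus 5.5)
Your proposal is correct and follows essentially the standard argument behind this result; the paper gives no proof of its own and defers to Raulot's Proposition 3. That argument checks that the rescaled kernel solves the defining MIT$^+$ problem for $\bar g$, using the conformal covariance of $D_A$, the conformal invariance of $\bP^{\pm}$, and $dV_{\bar g}=e^{nu}\,dV_g$, and then concludes by uniqueness from the invertibility of $D_A$ under MIT conditions. Your weak-formulation cross-check is the cleanest way to make the delta-function bookkeeping rigorous: the $x$-exponents $-\frac{n-1}{2}-\frac{n+1}{2}+n$ cancel, and the leftover factor $e^{-\frac{n-1}{2}u(y)}$ reproduces $\psi_2(y)=e^{-\frac{n-1}{2}u(y)}\bar\phi(y)$.
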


We direct the reader to \cite[Proposition 3]{Raulot2011} for a proof of the above proposition. We also have the following expected asymptotic behaviour for the Dirac Green's kernels as $x \to y$, which follows from the study of fundamental solutions for Dirac systems carried out in \cite{DinhSon2012}:

\begin{proposition}\label{prop: Greens asymptotics}
    There exists $\delta > 0$ and constants $c_n, c_n' > 0$ such that as $x \to y$, we have
\begin{equation}\label{asymptotics of Greens kernel}
    G(x,y) =  \frac{c_n \gamma(e_r)}{d(x,y)^{n-1}} + O\left( d(x,y)^{1-n + \delta} \right),
\end{equation}
\begin{equation}\label{asymptotics of Greens kernel derivatives}
    \nabla_x G(x,y) =  \frac{c'_n \gamma(e_r)}{d(x,y)^{n}} + O\left( d(x,y)^{-n + \delta} \right),
\end{equation}
where $d(x,y)$ is the geodesic distance, and $e_r$ is the radial unit vector near $y$.
\end{proposition}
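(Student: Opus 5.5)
The plan is a parametrix construction near the diagonal, with the Euclidean Dirac fundamental solution as the model. Fix $y \in M^{\mathrm{int}}$ and work in Riemannian normal coordinates centred at $y$, in a small geodesic ball $B_\rho(y)$ that stays away from $\d M$. Over this ball we use the synchronous frame $(e_i)$, obtained by radial parallel transport of an orthonormal basis at $y$, and the induced canonical spinor frame. We also take a radially parallel frame of $E$.

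In these frames $g_{ij} = \delta_{ij} + O(r^2)$. The spin connection form and the connection form $A$ are both $O(r)$. Hence
\[
D_A = \sum_i \gamma_0(e_i)\d_i + P,
\]
where $\gamma_0$ is the constant Clifford multiplication and $P$ is a first-order operator with coefficients $O(r)$ on the derivatives and $O(1)$ on the zeroth-order part.

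The Euclidean fundamental solution of $\gamma_0(e_i)\d_i$ is
\[
E_0(x) = c_n\, \gamma_0(x/|x|)\, |x|^{1-n}, \qquad c_n = -\frac{1}{\omega_{n-1}}
\]
(up to the sign convention for Clifford multiplication). It satisfies $\gamma_0(e_i)\d_i E_0 = \delta_0$. Since $d(x,y) = |x|$ and $e_r = x/|x|$ in normal coordinates, $E_0$ is exactly the leading term claimed in \eqref{asymptotics of Greens kernel}.

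The construction proceeds in three steps.

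\textbf{Step 1: cut-off parametrix.} Let $\chi$ be a cutoff equal to $1$ near $y$ and supported in $B_\rho(y)$. Define $K(x) := \chi E_0(x)$, extended by zero to all of $M$. Then
\[
D_A K = \delta_y \id + R,
\]
where $R = \chi P E_0 + [D_A,\chi]E_0$. The first term is $O(r^{1-n})$, because $r\,\d E_0 = O(r^{1-n})$ and $E_0 = O(r^{1-n})$. The second term is smooth. In particular $R \in L^p$ for all $p < n/(n-1)$.

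\textbf{Step 2: correction term.} Write $G(\cdot,y) = K + w$. Then $w$ must solve
\[
D_A w = -R, \qquad \bP^+ w|_{\d M} = 0,
\]
where we use that $K$ vanishes near $\d M$. This problem is uniquely solvable because $D_A$ with MIT boundary conditions is invertible. By $L^p$ elliptic regularity for this elliptic boundary problem, $w \in W^{1,p}$.

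\textbf{Step 3: bound on the remainder.} The claim $w = O(r^{1-n+\delta})$ does not follow from the Sobolev embedding alone, so we bootstrap. Take one further term of the parametrix: solve $\gamma_0(e_i)\d_i K_1 = -\chi P E_0$ locally by convolution with $E_0$. The right-hand side is homogeneous of degree $1-n$ modulo smoother terms, so $K_1 = O(r^{2-n}\log r)$ and $\d K_1 = O(r^{1-n}\log r)$. Iterate this a few times. The remainder after $k$ steps is $O(r^{k+1-n})$, hence bounded and Hölder for large $k$, and Schauder theory then makes the corresponding correction $C^{1}$.

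Summing up, $G - E_0 = O(r^{2-n}|\log r|)$. This gives \eqref{asymptotics of Greens kernel} for any $\delta < 1$, and the same expansion differentiated gives \eqref{asymptotics of Greens kernel derivatives}. In the derivative statement, the leading term $\d E_0$ is homogeneous of degree $-n$ and has the angular structure written there.

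The main obstacle is controlling the correction terms uniformly and pointwise, rather than only in $L^p$, including their derivatives. The convolution estimates for kernels homogeneous of degree $1-n$ (the Hardy–Littlewood–Sobolev estimate and its pointwise version for homogeneous kernels) have to be applied carefully. Alternatively, the whole step can be quoted from the general theory of fundamental solutions for elliptic systems with smooth coefficients, as in \cite{DinhSon2012}. In that approach, conformal covariance (Proposition \ref{prop: Greens kernel under conformal scaling}) lets us assume the metric is as nice as needed locally, and boundary effects only contribute a smooth term since $y$ is interior.
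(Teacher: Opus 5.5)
Your proposal is correct in outline, and it takes a more self-contained route than the paper. The paper gives no argument beyond invoking the general theory of fundamental solutions for Dirac systems in \cite{DinhSon2012}. You instead build a Levi parametrix around the Euclidean kernel $E_0$ in normal coordinates with a radially parallel frame, where $\gamma(e_r)=\gamma(x/|x|)$ exactly, so $E_0$ is literally the claimed leading term. You then iterate until the remainder is H\"older, and close with invertibility of the MIT problem plus Schauder regularity.

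Your route makes explicit what the citation hides. It shows that any $\delta<1$ works, and that the boundary contributes only a smooth term for interior $y$. It also shows that $c_n=-1/\omega_{n-1}$, so with the paper's convention $\gamma(v)^2=-|v|^2$ the constant is actually negative. This is harmless, since only $c_n\neq 0$ is used later.

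The citation buys brevity, and it covers exactly the step you flag as the obstacle. $\partial K_j$ is a Calder\'on--Zygmund integral of the previous remainder, so size bounds $O(r^m)$ on the densities alone do not give pointwise bounds. You need to Taylor-expand the coefficients of $P$ so that each density is a finite sum of terms that are homogeneous and smooth on the sphere, plus a smoother remainder; alternatively, work in weighted H\"older spaces. The same structure is what makes the final remainder H\"older rather than merely bounded. Once the parametrix has been iterated this far, the $L^p$ theory for the MIT problem in your Step 2 is unnecessary: $H^1$ invertibility together with elliptic regularity suffices.

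One correction. $\partial E_0$ does not have ``the angular structure written there.'' One has $\partial_j E_0=c_n r^{-n}\big(\gamma(e_j)-n\,\tfrac{x_j}{r}\,\gamma(e_r)\big)$, which is not a multiple of $\gamma(e_r)$. Only the radial derivative has the displayed form, namely $\nabla_{e_r}E_0=(1-n)c_n\,\gamma(e_r)\,r^{-n}$. Thus $c_n'=(1-n)c_n$, and the two constants have opposite signs. What your argument actually proves is $\nabla_x G=\nabla E_0+O(r^{1-n}|\log r|)$, and this is the correct way to read the second display.
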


Finally, we note that because the Dirac operator with MIT boundary conditions is not self-adjoint, we do not have a notion of symmetry for the Green's kernels under the interchange of $x$ and $y$, unlike the Green's kernel for chiral boundary conditions \cite{Raulot2011, Valero2025}. Rather, the adjoint of $D_A$ with MIT$^\pm$ conditions is $D_A$ with MIT$^{\mp}$ conditions. We thus have \cite{Raulot2011}:

\begin{proposition}\label{prop: symmetry of  MIT Greens kernel}
    Let $G^{-}(x,y)$ be the Green's kernel for MIT$^-$ conditions. Then we have
    \begin{equation}\label{symmetry of MIT Greens kernel}
    G(x,y)^* = G^{-}(y,x)
    \end{equation}
    as elements in $\Hom(\cE_x, \cE_y)$, whenever $x \neq y$.
\end{proposition}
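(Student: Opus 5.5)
The plan is to prove Proposition \ref{prop: symmetry of  MIT Greens kernel} by the standard Green's-identity argument: pair $G(\cdot,y)$ against $G^-(\cdot,z)$ and let the boundary term vanish because the two kernels satisfy complementary boundary conditions. Fix distinct interior points $y,z\in M^{\mathrm{int}}$ and vectors $a\in\cE_y$, $b\in\cE_z$, and set
\begin{equation}
\psi_1(x) := G(x,y)a, \qquad \psi_2(x) := G^-(x,z)b .
\end{equation}
These sections are smooth away from $y$ and $z$ respectively, satisfy $D_A\psi_1=0$ on $M\setminus\{y\}$ and $D_A\psi_2=0$ on $M\setminus\{z\}$, and obey $\bP^+\psi_1|_{\d M}=0$ and $\bP^-\psi_2|_{\d M}=0$.

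First I would apply the Green's identity \eqref{Green's identity for D_A} on $M_\epsilon := M\setminus(B_\epsilon(y)\cup B_\epsilon(z))$. The interior integrand vanishes, so only boundary terms remain: one over $\d M$ and one over each small sphere. On $\d M$ we have $\psi_1\in\bW_E^-$ and $\psi_2\in\bW_E^+$. Since $\gamma(\nu)$ preserves each eigenbundle, $\gamma(\nu)\psi_1\in\bW_E^-$, which is orthogonal to $\bW_E^+$ (the bundles are eigenbundles of the skew-adjoint, hence normal, operator $\gamma(\nu)$). Therefore $\<\gamma(\nu)\psi_1,\psi_2\>=0$ pointwise on $\d M$, and the $\d M$ term drops out.

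Next I would evaluate the small-sphere terms using the asymptotics of Proposition \ref{prop: Greens asymptotics}.
\begin{itemize}
\item[] On $\d B_\epsilon(y)$, the leading term $c_n\gamma(e_r)\epsilon^{1-n}a$ of $\psi_1$ is paired against $\gamma(\nu)$ with $\nu=\pm e_r$. Using $\gamma(e_r)^2=-1$ and the area $\sim\epsilon^{n-1}$ of the sphere, this converges to a constant multiple of $\<a,\psi_2(y)\>$. The $O(\epsilon^{1-n+\delta})$ error contributes $O(\epsilon^\delta)\to 0$.
\item[] The normalization is fixed, without computing $c_n$, by the defining identity $D_AG=\delta_y\id$. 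Equivalently, the same limit computed with an arbitrary smooth test section must reproduce $\<a,\psi_2(y)\>$ exactly, so the constant and sign are forced.
\item[] The sphere around $z$ gives, symmetrically, $-\<\psi_1(z),b\>$.
\end{itemize}
Letting $\epsilon\to0$ then gives $\<a,G^-(y,z)b\>=\<G(z,y)a,b\>$ for all $a,b$, that is,
\begin{equation}
G(z,y)^* = G^-(y,z).
\end{equation}
This is \eqref{symmetry of MIT Greens kernel} with $x=z$.

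The main obstacle is bookkeeping the signs and constants in the sphere limits: the orientation of the inner normal, the sign convention in \eqref{Green's identity for D_A}, and the fact that $\gamma(\nu)$ is skew. To avoid errors, I would not track $c_n$ explicitly. Instead I would use the distributional characterization: for $\varphi$ smooth with $\bP^-\varphi=0$, the weak identity $\int\<G(x,y)a,D_A\varphi\>=\<a,\varphi(y)\>$ holds (the $\bP^+$ analogue of the one stated after \eqref{Greens kernel BVP definition}). I would apply it with $\varphi=\chi\psi_2$, where $\chi$ is a cutoff equal to $1$ near $y$ and $\d M$ and vanishing near $z$. The remaining term, localized near $z$, is then handled the same way with the roles of $G$ and $G^-$ reversed. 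This trades the delicate sphere limits for the cleaner defining identities of the two kernels.
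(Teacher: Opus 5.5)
Your proof is correct and is essentially the argument the paper relies on. The paper gives no separate proof: it cites Raulot and notes that $D_A$ with MIT$^\pm$ conditions has adjoint $D_A$ with MIT$^\mp$ conditions, which is exactly your observation that $\langle\gamma(\nu)\psi_1,\psi_2\rangle$ vanishes on $\d M$ because $\psi_1\in\bW_E^-$ and $\psi_2\in\bW_E^+$ are orthogonal. Your cutoff version, with test sections satisfying $\bP^-\varphi=0$ (the correct condition for pairing against the MIT$^+$ kernel) and the skew-adjointness of $\gamma(d\chi)$, handles both singularities without tracking $c_n$.
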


Let us now return to the proof of Theorem \ref{main theorem}. In Section \ref{sec: extension}, we showed that we can extend our manifolds, metrics, bundles, and connections in an analytic fashion after a suitable conformal rescaling. Let $\tilde{G}_i(x,y)$ be the Green's kernel for the {\em extended} manifold $(\tilde{M}, \tilde{g}_i)$ and extended auxiliary data. We now want to show that the equivalence of BC maps on $\sU$ implies that $\tilde{G}_1(x,y)$ and $\tilde{G}_2(x,y)$ are in fact equal in the extended region $W$, once we make the usual identifications of our bundles over $W$. The following proposition is essentially the same as \cite[Proposition 4.3]{Valero2025}:

\begin{proposition}\label{prop: Greens functions agree in U}
    We have $\tilde{G}_1(x,y) = \tilde{G}_2(x,y)$ in $\Hom(\tilde{\cE}_y, \tilde{\cE}_x)$ for all $x,y \in W$ with $x \neq y$.
\end{proposition}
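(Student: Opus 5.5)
The plan is the standard argument of \cite[Proposition 4.3]{Valero2025}, adapted to MIT conditions. Fix $y \in W$ and set $\tilde G_i^y := \tilde G_i(\cdot,y)$, a section of $\tilde\cE_i$ over $\tilde M_i \setm \{y\}$. Since $y \notin M_i$, the restriction $\tilde G_i^y|_{M_i}$ is smooth up to $\d M_i$ and satisfies $D_{A_i}\tilde G_i^y = 0$ on $M_i$ (acting on each column $\tilde G_i^y w$, $w \in \tilde\cE_y$). Two preliminary reductions are needed. First, since $\tilde g_i$ restricted to $M_i$ is the rescaled metric $\bar g_i = c_i g_i$ with $c_i|_{\sU}$ constant, \eqref{conformal change of BC map equivalence} gives $\Theta_{\bar g_1,A_1,\sU} \sim \Theta_{\bar g_2,A_2,\sU}$, and the identifications $j_S, j_E$ over $W \cup U$ extend the maps $\hat F$, $\chi$ of the equivalence. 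Second, the Cauchy data of $D_{A_i}$-harmonic spinors on $M_i$ restricted to $\sU$ is the graph of $\Theta_{\bar g_i,A_i}$; for spinors not supported in $\sU$ the localization argument of \cite{Valero2025} applies. I will use the global BC map plus a Runge-type density argument, as below.

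The core construction runs as follows. Define $H^y := \tilde G_1^y - j_{\cE}^{-1}\tilde G_2^y j_{\cE}$ on $W \cup U$, where both kernels are defined and the bundles are identified. By construction $D_{\tilde A} H^y = 0$ on $(W\cup U)\setm\{y\}$, and the delta singularities cancel, so $H^y$ solves the Dirac equation across $y$ (in the distributional sense, hence smoothly). I want to show $H^y$ vanishes on the part of $W$ near $\sU$. The natural route is to show that $H^y$ has vanishing Cauchy data on $\sU$, i.e.\ $H^y|_{\sU} = 0$, and then invoke unique continuation \cite{BW1993} from $\sU$ into $W$. Real-analyticity of everything on $W$ then propagates $H^y = 0$ to all of the connected set $W \setm\{y\}$, and also to $x$ near $\sU$ in $U$.

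To get $H^y|_{\sU}=0$, I will use a Green's-identity pairing, which I expect to be the main obstacle. Take $\psi$ harmonic on $\tilde M_1$ near $M_1$ with $\bP^+\psi|_{\d\tilde M_1}=0$, i.e.\ $\psi = \tilde G_1^{-}(\cdot,z)w$ with $z \in W$, the MIT$^-$ kernel. Pairing via \eqref{Green's identity for D_A} on $M_1$ against $\tilde G_1^y$ reduces $\int_{\d M_1}\<\gamma(\nu)\tilde G_1^y w',\psi\>$ to boundary data. On $\d M_1\setm\sU$ both manifolds have unrelated boundaries, so this is where the difficulty lies. I would instead pair on $\tilde M_i \setm M_i$-side domains, or, more cleanly, use the representation formula: for $x \in W$,
\begin{equation*}
\tilde G_i(x,y) = -\int_{\d M_i} \tilde G_i(x,z)\gamma(\nu)\tilde G_i(z,y)\,dz + (\text{terms from } \d\tilde M_i),
\end{equation*}
and show that the boundary terms, which involve only the Cauchy data of $\tilde G_i(\cdot,y)|_{M_i}$ and of $\tilde G_i^-(\cdot,x)|_{M_i}$ on $\d M_i$, agree. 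Using $\bP^-(\tilde G_i^y)|_{\d M_i} = \Theta_{\bar g_i,A_i}(\bP^+\tilde G_i^y|_{\d M_i})$ together with the adjoint relation $\Theta^- = \Theta^*$ and Proposition \ref{prop: symmetry of  MIT Greens kernel}, the integrand $\<\gamma(\nu)\phi_1,\phi_2\>$ for two harmonic spinors reduces to $\<\bP^+\phi_1,\bP^+\phi_2\>$ and $\<\Theta\bP^+\phi_1,\Theta\bP^+\phi_2\>$-type terms. Unitarity makes these expressible through the $\bW^+$ data, so the pairing is an invariant of the BC map alone.

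The remaining gap is that the relevant data are supported on all of $\d M_i$, not only on $\sU$. I would handle this by localizing $\d\tilde M_i$ so that it differs from $\d M_i$ only over $\sU$, making $\tilde M_i \setm M_i = W$ a collar over $\sU$. Then the difference $\tilde G_i(\cdot,y) - G_{M_i\cup W}$ terms are controlled by Cauchy data on $\sU$, where the partial-data equivalence applies. The step to check carefully is exactly this localization, i.e.\ that compactly supported data in $\sU$ suffice, following \cite[Prop.~4.3]{Valero2025}.
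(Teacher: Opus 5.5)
There is a genuine gap, located exactly at the step you flag as the main obstacle: you never establish $H^y|_{\sU}=0$, and the route you sketch cannot establish it. Equivalence of the BC maps tells you, for each $i$ separately, that the $\bP^-$ trace of $\tilde G_i(\cdot,y)w$ on $\sU$ is $\Theta$ applied to its $\bP^+$ trace. It says nothing a priori about the $\bP^+$ traces of $\tilde G_1(\cdot,y)w$ and $\tilde G_2(\cdot,y)w$ coinciding, and that is precisely what $H^y|_{\sU}=0$ requires. Rewriting $\int\langle\gamma(\nu)\phi_1,\phi_2\rangle$ via unitarity in terms of $\bW^+$ data only shows that such pairings are determined once the $\bW^+$ data are known, so the argument is circular.

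The difficulty you anticipate on $\d M_i\setm\sU$ is also not real. That set is part of $\d\tilde M_i$, so the MIT$^+$ condition defining $\tilde G_i$ already gives $\bP^+\tilde G_i(\cdot,y)w=0$ there. Since $\tilde M_i^{\mathrm{int}}\setm M_i=W$ is attached only over $\sU$, no Runge density, representation formula, or further localization is needed.

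The missing idea is gluing plus uniqueness for the MIT problem. The paper lets $\Xi$ be the harmonic spinor on $M_2$ whose $\bP^+$ trace is $\bP^+\tilde G_1(\cdot,y)\psi$ on $\sU$ and $0$ on $\d M_2\setm\sU$. Because $\tilde G_1(\cdot,y)\psi|_{M_1}$ is harmonic with the same $\bP^+$ data, the equivalence $\Theta_{g_1,A_1,\sU}\sim\Theta_{g_2,A_2,\sU}$ forces the $\bP^-$ traces to agree as well, so $\Xi|_{\sU}=\tilde G_1(\cdot,y)\psi|_{\sU}$. Gluing $\Xi$ on $M_2$ to $\tilde G_1(\cdot,y)\psi$ on $W$ gives a distributional solution of $D\tilde\Xi=\psi\,\delta_y$ on $\tilde M_2$ with $\bP^+\tilde\Xi=0$ on $\d\tilde M_2$. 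Hence $\tilde\Xi=\tilde G_2(\cdot,y)\psi$ by uniqueness.

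You could also finish inside your own framework. $H^y|_W$ is harmonic, satisfies $\bP^+H^y=0$ on $\d\tilde M\cap\overline W$, and satisfies $\bP^-H^y=\Theta(\bP^+H^y)$ on $\sU$. Glue it to the solution on $M_1$ whose $\bP^+$ trace is $\bP^+H^y|_{\sU}$ (extended by zero). This produces an MIT$^+$ harmonic spinor on $\tilde M_1$, which vanishes by invertibility, so $H^y=0$ on $W$ directly, without any separate unique continuation from $\sU$.
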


\begin{proof}
    For $y \in W$ and $\psi \in \tilde{\cE}_y$, we define $\Xi_{y, \psi}$ to be the unique solution to the following boundary value problem on the unextended manifold $M_2$,
    \begin{equation}
        \begin{cases}
            D_{g_2, A_2} \Xi_{y, \psi} = 0 \\ 
            \bP^+ \Xi_{y, \psi}|_{\sU} = \bP^+ \tilde{G}_1(x,y)\psi, \\
            \bP^+ \Xi_{y, \psi}|_{\d M_2 \setm \sU} = 0.
        \end{cases}
    \end{equation}
On the other hand, since $y \in W$, we have that $\tilde{G}_1(x,y) \psi$ satisfies
\begin{equation}
        \begin{cases}
            D_{g_1, A_1} \tilde{G}_1(x,y) \psi = 0 \\ 
            \bP^+ \tilde{G}_1(x,y) \psi |_{\sU} = \bP^+ \tilde{G}_1(x,y) \psi, \\
            \bP^+ \tilde{G}_1(x,y) \psi|_{\d M_1 \setm \sU} = 0,
        \end{cases}
    \end{equation}
on $M_1$. Therefore, as the boundary conjugation maps $\Theta_{g_1, A_1, \sU}$ and $\Theta_{g_2, A_2, \sU}$ are equivalent, upon making the usual identifications of our extended bundles near $\sU$, we have
\begin{equation}
    \Theta_{g_2, A_2, \sU}\big(\, \bP^+ \Xi_{y, \psi}\big) = \Theta_{g_1, A_1, \sU}\big(\, \bP^{+} \tilde{G}_1(x,y) \psi \big) = \bP^{-} \tilde{G}_1(x,y) \psi.
\end{equation}
Therefore, $\Xi_{y, \psi}|_{\sU} = \tilde{G}_1(x,y)\psi|_{\sU}$. So we can define the following section of $\tilde{\cE}_2$ over $\tilde{M}_2$,
\begin{equation}
    \tilde{\Xi}_{y,\psi} = \begin{cases}
        \tilde{G}_1(x,y)\psi, & x \in \tilde{M}_2 \setm M_2, \\
        \Xi_{y,\psi}(x), & x \in M_2.
    \end{cases}
\end{equation}
Note that by construction, $\tilde{\Xi}_{y,\psi}$ is a distributional solution of 
\begin{equation}
    \begin{cases}
        D_{\tilde{g}_2, \tilde{A}_2} \tilde{\Xi}_{y, \psi} = \psi\, \delta_y, \\
        \bP^+  \tilde{\Xi}_{y,\psi} |_{\d \tilde{M}_2} = 0.
    \end{cases}
\end{equation}
Therefore, by uniqueness, we must have $\tilde{\Xi}_{y, \psi} = \tilde{G}_2(x,y) \psi$. Since this is true for all $\psi \in \tilde{\cE}_y$, we have $\tilde{G}_1(x,y) = \tilde{G}_2(x,y)$ for $x, y \in W$ with $x \neq y$. This completes the proof.
\end{proof}


Motivated by \cite{LTS2022}, we now want to define a rescaled version of these Green's kernels to give us something analytic in the interior of our manifolds. To this end, let $\Gamma \sub \d \tilde{M}_i \setm \d M_i$ be an open subset of the common extended boundary, and let $\rho : \Gamma \to \R$ be a smooth non-negative function with compact support in $\Gamma$, and which is positive on some open subset of $\Gamma$. We then define a scalar function $P_i(x)$ on the interior of $\tilde{M}_i$ by 
\begin{equation}\label{P_i definition}
    P_i(x) := \left( \int_{\Gamma} \rho(z)\, \Tr_{\tilde{\cE}_z} \left( \tilde{G}_i(x,z)^* \tilde{G}_i(x,z) \right)\, dz  \right)^{\frac{1}{2}}.
\end{equation}
Note that $P_i : \tilde{M}_i^{\mathrm{int}} \to \R$ is well-defined and smooth. Indeed, for $x \in \tilde{M}_i^{\mathrm{int}}$, the integrand in \eqref{P_i definition} is a smooth and positive function of $x$. Moreover, we claim that $P_i(x) > 0$ for all $x \in \tilde{M}_i^{\mathrm{int}}$. Indeed, if $P_i(x) = 0$ for some $x \in \tilde{M}_i^{\mathrm{int}}$, then it follows from \eqref{P_i definition} that we must have $\tilde{G}_i(x,z) = 0$ for all $z$ in some open subset of $\Gamma$. But by \eqref{symmetry of MIT Greens kernel}, $\tilde{G}_i(x,z)^*$ satisfies a Dirac equation with respect to $z$, and so by the unique continuation principle \cite{BW1993}, it must be that $\tilde{G}_i(x,z)$ is identically $0$ for $x \neq z$. This however contradicts the asymptotics \eqref{asymptotics of Greens kernel}. We therefore have $P_i(x) > 0$ for al $x \in \tilde{M}_i^{\mathrm{int}}$. Note that this argument continues to hold for $x \in \d \tilde{M}_i \setm \Gamma$.

We can therefore define the {\em rescaled Green's kernel},
\begin{equation}\label{rescaled Green's kernel first appearance}
    K_i(x,y) := \frac{\tilde{G}_i(x,y)}{P_i(x)P_i(y)},
\end{equation}
which like $\tilde{G}_i(x,y)$ is a smooth section of $\tilde{\cE} \boxtimes \tilde{\cE}^*$ for $x,y \in \tilde{M}_i^{\mathrm{int}}$ and $x \neq y$. Moreover, since the Green's kernels are equal near $\Gamma$ by Proposition \ref{prop: Greens functions agree in U}, it follows that $P_1 = P_2$ in $W$, and so we also have $K_1(x,y) = K_2(x,y)$ for $x,y \in W$ and $x \neq y$. Note that if we conformally rescale our metric $\bar{g}_i = e^{2u} \tilde{g}_i$ such that $u$ equals $0$ near $\Gamma$, then by \eqref{Greens kernels under conformal scaling} we have
\begin{equation}
   P_{\bar{g}_i}(x) = e^{-\frac{n-1}{2} u(x)} P_{g_i}(x),
\end{equation}
where we have written the dependence on the metric explicitly in the subscripts for clarity. Thus, under conformal scaling by such a conformal factor, by \eqref{Greens kernels under conformal scaling} the rescaled Green's kernels satisfy
\begin{equation}\label{K is invariant under conformal}
    K_{\bar{g}_i}(x,y) =  K_{g_i}(x,y),
\end{equation}
again writing the dependence of the rescaled Kernels on the metric explicitly.

We have the following important lemma:
\begin{lemma}\label{lemma: K_i is analytic}
    $K_i(x,y)$ is a real-analytic section of $\tilde{\cE} \boxtimes \tilde{\cE}^*$ for $x,y \in \tilde{M}_i^{\mathrm{int}}$ and $x \neq y$.
\end{lemma}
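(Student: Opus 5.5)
The plan is to reduce analyticity of $K_i$ to analyticity of Green's kernels for an auxiliary real-analytic metric in the same conformal class, using the conformal invariance \eqref{K is invariant under conformal}. The point is that $K_i$ is built to be invariant under conformal rescalings that are trivial near $\Gamma$, so locally it can be computed from any convenient conformal representative.

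First, fix $x_0,y_0 \in \tilde{M}_i^{\mathrm{int}}$ with $x_0 \neq y_0$. By Proposition \ref{prop: LCRA manifolds have analytic structure}, around $x_0$ and $y_0$ there are (disjoint, small) analytic charts $U_x, U_y$ in which $\tilde{g}_i = e^{2\a} h$ with $h$ real-analytic. We may assume $U_x \cup U_y$ is disjoint from a neighbourhood of $\Gamma$, shrinking if necessary. If one of the points lies in $W$ near $\Gamma$, the metric there is already analytic, so we take $\a = 0$ on that chart. Choose a smooth function $u$ on $\tilde{M}_i$ with $u = -\a$ on $U_x \cup U_y$ (after shrinking) and $u = 0$ near $\Gamma$ and near $\d\tilde{M}_i$. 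Set $\bar{g} = e^{2u}\tilde{g}_i$. Then $\bar{g} = h$ is real-analytic on $U_x\cup U_y$, and by \eqref{K is invariant under conformal}, $K_{\tilde{g}_i} = K_{\bar{g}}$. Here we use the canonical identification of spinor bundles, which is analytic in the structure of Proposition \ref{prop: spinor bundle is analytic bundle}. So it suffices to prove analyticity of $K_{\bar g}$ near $(x_0,y_0)$.

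Next, we show $\bar{G}(x,y)$ is analytic on $U_x\times U_y$. For fixed $y \in U_y$, $x\mapsto \bar G(x,y)$ solves $D_{\bar g,A}\bar G = 0$ on $U_x$. This is an elliptic system with real-analytic coefficients, since $\bar g = h$ and $A$ are analytic there, so $\bar G$ is analytic in $x$. By Proposition \ref{prop: symmetry of  MIT Greens kernel}, $y\mapsto \bar G(x,y)^* = G^-(y,x)$ solves a Dirac equation in $y$ on $U_y$, so $\bar G$ is analytic in $y$ as well. Joint analyticity follows by applying analytic elliptic regularity to the system $(D_x \oplus D_y^*)$, or equivalently to $D_x^2 + (D^*_y)^2$, which is elliptic with analytic coefficients on $U_x\times U_y$ (Morrey--Nirenberg). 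This step, obtaining \emph{joint} rather than separate analyticity, is the main technical obstacle. I would handle it by noting that $\bar G$ is annihilated by the operator $D_{x}^2 + D_{y}^2$ acting on the bundle $\tilde{\cE}\boxtimes\tilde{\cE}^*$ over the product. This operator has scalar principal symbol $|\x|^2+|\eta|^2$ and analytic coefficients, so the analytic hypoellipticity theorem applies.

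Finally, we show $P_{\bar g}$ is analytic and positive on $U_x$ (and likewise on $U_y$). Since $\Gamma$ is at positive distance from $U_x$, the map $(x,z)\mapsto \bar G(x,z)$ is analytic in $x$ uniformly for $z\in\operatorname{supp}\rho$. The Cauchy estimates on a fixed complex neighbourhood are uniform in $z$ by compactness and the interior analytic estimates. Hence $x\mapsto \int_\Gamma \rho\,\Tr(\bar G^*\bar G)\,dz$ extends holomorphically and is analytic. Since it is positive, as shown before the lemma, its square root $P_{\bar g}$ is analytic too. The quotient $K_{\bar g} = \bar G/(P(x)P(y))$ is then analytic near $(x_0,y_0)$, which proves the lemma.
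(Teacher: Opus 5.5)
Your proposal is correct and follows essentially the same route as the paper: use the invariance \eqref{K is invariant under conformal} to pass to a conformal representative that is real-analytic near the points in question, apply analytic elliptic regularity to the Green's kernel in $x$ and, via the adjoint symmetry $G(x,y)^* = G^{-}(y,x)$, in $y$, and show that $P_i$ is analytic by holomorphically extending the integrand uniformly over the compact set $\operatorname{supp}\rho$. Your additional steps fill in points the paper's proof passes over quickly: you rescale near $x_0$ and $y_0$ simultaneously and use analytic hypoellipticity of $D_x^2 + D_y^2$ on the product to get joint analyticity, which matters because separate real-analyticity in $x$ and in $y$ does not by itself imply it, and you use interior analytic estimates to justify the uniform radius of holomorphic extension.
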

\begin{proof}
    By definition of the $C^{\omega}$-structure on the interior of $\tilde{M}_i$, it suffices to verify that $K_i(x,y)$ is represented by an analytic matrix with respect to an LCRA coordinate chart, and real-analytic trivializations of $\tilde{E}_i$ and $\tilde{\bS}_i$, about a point $x_0$, when $x \neq y$. In such a local representation, the metric $\tilde{g}_i$ is represented by $e^{2 \alpha_i} h_i$, where $h_i$ is real-analytic. However, by equation \eqref{K is invariant under conformal} we have (upon making the canonical identification of our spinor bundles, as usual)
    \begin{equation}\label{K_i is analytic eqn}
        K_{\tilde{g}_i}(x,y) = K_{g_i'}(x,y),
    \end{equation}
    where $g_i' := e^{-2u_i} g_i$ with $u_i$ defined to be $0$ near $\Gamma$, and equal to $\alpha_i$ near $x$. By definition, however, the rescaled Green's kernel $K_{g_i'}(x,y)$ takes the form 
    \begin{equation}
        K_{g_i'}(x,y) = \frac{G_{g_i'}(x,y)}{P_{g'_i}(x) P_{g'_i}(y)}.
    \end{equation}
    Note that the Green's kernel $G_{g_i'}(x,y)$ satisfies the equation
    \begin{equation}
        D_{g_i', A_i} G_{g_i'}(x,y) = 0,
    \end{equation}
    in $x$, which in a local presentation as above is a linear elliptic system with real-analytic coefficients. Therefore, it follows from elliptic regularity that $G_{g_i'}(x,y)$ is real-analytic in $x$. From \eqref{symmetry of MIT Greens kernel}, a similar argument shows that $G_{g_i'}(x,y)$ is also real-analytic in $y$. Now we claim that the function $P_{g_i'}(x)$ given by \eqref{P_i definition} is real-analytic in $x$. Indeed, let us consider the restriction of $P_{g_i'}$ to a suitably small neighbourhood $V$ of $x_0$. The integrand in \eqref{P_i definition} is
    \begin{equation}
        f(x,z) := \rho(z)\, \Tr_{\tilde{\cE}_z} \left( \tilde{G}_{g_i'}(x,z)^* \tilde{G}_{g_i'}(x,z) \right),
    \end{equation}
    considered as a function on $V \times \overline{\Gamma}$ where $\Gamma \sub \d \tilde{M}_i \setm M_i$ and $\rho \in C^{\8}(\Gamma)$ are given in the definition of $P_i$ above. Since $G_{g_i'}(x,y)$ is real-analytic in $x$, it follows that $f(x,z)$ is analytic in $x$ for each fixed $z \in \bar{\Gamma}$. Choosing coordinates centered at $x_0$, it therefore suffices to show that
    \begin{equation}
        F(x) := \int_{\bar{\Gamma}} f(x,z)\, dz
    \end{equation}
    is a real-analytic function in a neighbourhood of $0 \in \R^n$. This follows from the theory of analytic functions, for which we refer the reader to \cite{Krantz2002primer, range1998holomorphic}, for example. Here we only sketch the proof. Since $f(x,z)$ is real-analytic for each fixed $z \in \bar{\Gamma}$, it follows that $f(\cdot,z)$ has a holomorphic extension $f_{\C}(\cdot, z)$ to a neighbourhood of $0 \in \C^n$ for each $z \in \bar{\Gamma}$. Since $z$ varies over a compact set, we can take $f_{\C}(\cdot, z)$ to be a family of functions on a fixed neighbourhood $\cN$ of $0$ in $\C^n$. If we define
    \begin{equation}
        F_{\C}(x) := \int_{\bar{\Gamma}} f_{\C}(x,z)\, dz
    \end{equation}
    for $x \in \cN$, then $F$ is the restriction of $F_{\C}$ to $\R^n$. It thus suffices to show that $F_{\C}$ is holomorphic on $\cN$. By Hartog's theorem, it suffices to show that it is separately holomorphic in each complex variable. By Morera's theorem, it suffices to show that 
    \begin{equation}
        \int_{\gamma} F_{\C}(x_1, \dots, x_n)\, dx_1 = 0
    \end{equation}
    for all closed curves $\gamma \sub \C$ and fixed $x_2, \dots, x_n$ in $\cN$. By Fubini's theorem and the holomorphicity of $f_{\C}(\cdot, z)$ for each $z \in \bar{\Gamma}$, we may deduce that
    \begin{equation}
        \int_{\gamma} F_{\C}(x)\, dx_1 = \int_{\gamma} \int_{\bar{\Gamma}} f_{\C}(x,z)\, dz\, dx_1 = \int_{\bar{\Gamma}} \int_{\gamma} f_{\C}(x,z)\, dx_1\, dz = 0
    \end{equation}
    so long as $|f_{\C}(x,z)|$ is jointly integrable over $\cN \times \bar{\Gamma}$. Since this is clearly true of of $|f(x,z)|$, by shrinking $\cN$ if necessary, it is also true of $f_{\C}(x,z)$. Thus, $F_{\C}$ is holomorphic, and hence we conclude that $F$ is real-analytic in a neighbourhood of $0$. Therefore, $P_{g_i'}(x)$ is indeed real-analytic near $x_0$. Since one can make the same argument for $y$, it follows that the rescaled Green's kernel $K_{g_i'}(x,y)$ is real-analytic when $x \neq y$, and so the lemma is proved.
\end{proof}

Finally, we have the following result regarding the asymptotics of the rescaled Green's kernels, which follows immediately from Proposition \ref{prop: Greens asymptotics}:

\begin{proposition}\label{prop: rescaled asymptotics}
    For $x \in \tilde{M}_i^{\mathrm{int}}$, we have
    \begin{equation}\label{rescaled asymptotics}
        \lim_{x \to y} \frac{K_i(x,y)}{d_i(x,y)^{1-n}} = \frac{c_n \gamma(e_r)}{P_i(x)^2},
    \end{equation}
    where $d_i(x,y)$ is the distance between $x$ and $y$ with respect to $\tilde{g}_i$, $c_n$ is the constant from Proposition \ref{prop: Greens asymptotics}, and $e_r$ is the radial unit vector in geodesic normal coordinates about $x$.
\end{proposition}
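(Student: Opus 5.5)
We read the limit as $x \to y$ with $y$ fixed and the right-hand side evaluated at the limit point, so the denominator is $P_i(y)^2 = \lim_{x\to y} P_i(x)P_i(y)$. The plan is to divide the asymptotic expansion \eqref{asymptotics of Greens kernel} of Proposition \ref{prop: Greens asymptotics} by the normalizing factor and take the limit. First, note that Proposition \ref{prop: Greens asymptotics} applies to $\tilde{G}_i$: it is the MIT Green's kernel on the compact spin manifold with boundary $(\tilde{M}_i, \tilde{g}_i)$ with smooth twisting data, and the asymptotics are local near an interior point $y$. So for $x$ near $y$ we have
\begin{equation*}
    \tilde{G}_i(x,y) = \frac{c_n \gamma(e_r)}{d_i(x,y)^{n-1}} + O\left(d_i(x,y)^{1-n+\delta}\right).
\end{equation*}
Here $e_r$ is the radial unit vector in normal coordinates centered at the singular point.

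Second, I would use the definition \eqref{rescaled Green's kernel first appearance} to write
\begin{equation*}
    \frac{K_i(x,y)}{d_i(x,y)^{1-n}} = \frac{1}{P_i(x)P_i(y)}\left( c_n \gamma(e_r) + O\left(d_i(x,y)^{\delta}\right) \right).
\end{equation*}
Since $P_i$ is smooth and strictly positive on $\tilde{M}_i^{\mathrm{int}}$ (shown after \eqref{P_i definition}), $P_i(x)P_i(y) \to P_i(y)^2 > 0$ as $x \to y$. The error term is $O(d_i^{\delta})$ and tends to zero, which gives \eqref{rescaled asymptotics}.

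The only point needing care is the meaning of $\gamma(e_r)$, which depends on the direction of approach, so the limit is not literally a single endomorphism. I would state the result along each geodesic ray: as $x \to y$ along the direction $e_r$, the quotient converges to $c_n\gamma(e_r)/P_i(y)^2$. Equivalently, the difference between the two sides tends to zero. The identification of $\tilde{\cE}_x$ with $\tilde{\cE}_y$ needed to compare them is given by parallel transport along the short geodesic. Using this identification, the $O(d_i^\delta)$ bound from Proposition \ref{prop: Greens asymptotics} holds uniformly in direction, so no further estimate is required. I expect this bookkeeping to be the only subtle step. The rest is immediate once the positivity and continuity of $P_i$ are in hand.
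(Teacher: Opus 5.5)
Your proof is correct and follows the same route as the paper, which only says the statement "follows immediately" from Proposition \ref{prop: Greens asymptotics}: divide the kernel asymptotics by $P_i(x)P_i(y)$ and use that $P_i$ is continuous and strictly positive in the interior. Your reading of the denominator as $P_i$ evaluated at the limit point, and of $\gamma(e_r)$ direction-wise, is the sensible one and matches how the paper later uses the result in the proof of Lemma \ref{lemma: sequence lemma}.
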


\vspace{5pt}
\subsection{Constructing a conformal map from embeddings into a Sobolev space}\label{sec: reconstruction}

In this section, we want to use the rescaled Green's functions to construct an embedding of our bundles into a common Sobolev space. These embeddings can then be used to construct a conformal equivalence between the interiors of the manifolds. It will then follow from our choice of functions $P_i$ that this equivalence extends smoothly to the boundaries of the original manifolds. To this end, for $i = 1,2$ and $\varepsilon > 0$, we define the following sets,
\begin{equation}
    \widehat{M}_i := M_i \cup \{ x \in \tilde{M}_i  \ | \ d_i(x, \d \tilde{M}_i) \geq \varepsilon \},
\end{equation}
and choose $\varepsilon$ small enough so that $\widehat{M}_i$ is connected and has nonempty intersection with the open set $W$ defined at the end of Section \ref{sec: extension}. Note that $\widehat{M}_i$ so defined has as its boundary $\d M_i \setm \sU$ plus a part lying entirely in $W$. We also use $\widehat{\bS}_i$ to denote the restriction of $\tilde{\bS}_i$ to $\widehat{M}_i$, and similarly for $\widehat{E}_i$ and $\widehat{\cE}_i$. For $s < -\frac{n}{2}$ we then define the map
\begin{align}\label{embedding definition}
\begin{split}
    \cK_i : \widehat{\cE}_i &\to H^s(W, \tilde{\cE}_i|_W) \\
    \psi_y &\mapsto K_i(\cdot,y)\psi_y
\end{split}
\end{align}
where $\psi_y$ is an element in the fiber of $\widehat{\cE}_i$ over $y \in \widehat{M}_i$. Such a map is well-defined by our choice of $s$, and since the functions $P_i$ are smooth up to the boundary away from $\Gamma$. Moreover, $\cK_i$ a $C^1$ map into $H^s(W, \tilde{\cE}_i|_W)$ (see \cite{Valero2025} for example). In the following, we let $\widehat{\cE}_i^0$ denote the restriction of $\widehat{\cE}_i$ to $\widehat{M}_i^{\mathrm{int}}$ with the zero section removed. We then have the following:

\begin{lemma}
    For each $y \in \widehat{M}^{\mathrm{int}}_i$, the restriction of $\cK_i : \widehat{\cE}_i \to H^s(W, \tilde{\cE}_i|_W)$ to the fiber $(\widehat{\cE}_i)_y$ is a linear embedding. Moreover, $\cK_i$ is an injective $C^1$ immersion of $\widehat{\cE}^0_i$ into $H^s(W, \tilde{\cE}_i|_W)$.
\end{lemma}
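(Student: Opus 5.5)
We will establish three properties in turn: that $\cK_i$ is injective on each fiber, that it is injective on all of $\widehat{\cE}_i^0$, and that its differential is injective there. Each reduces to unique continuation for the Dirac operator combined with the singular asymptotics of Proposition \ref{prop: Greens asymptotics} and Proposition \ref{prop: rescaled asymptotics}.

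\textbf{Fiber injectivity.} Linearity on $(\widehat{\cE}_i)_y$ is clear from \eqref{embedding definition}. Suppose $K_i(\cdot,y)\psi_y = 0$ on $W$. Since $P_i>0$, this gives $\tilde{G}_i(x,y)\psi_y = 0$ for $x\in W$. The section $x\mapsto \tilde{G}_i(x,y)\psi_y$ solves $D_{\tilde g_i,\tilde A_i}(\cdot)=0$ on the connected set $\tilde{M}_i^{\mathrm{int}}\setm\{y\}$. By unique continuation \cite{BW1993} (or by Lemma \ref{lemma: K_i is analytic}), it therefore vanishes there. This contradicts \eqref{asymptotics of Greens kernel}, because $\gamma(e_r)$ is invertible and so $\gamma(e_r)\psi_y\neq 0$ when $\psi_y\ne0$.

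\textbf{Global injectivity on $\widehat{\cE}^0_i$.} Suppose $K_i(\cdot,y)\psi = K_i(\cdot,y')\psi'$ on $W$ with $\psi,\psi'\neq 0$. Both sides are harmonic spinors away from $\{y,y'\}$, so by unique continuation they agree on $\tilde{M}_i^{\mathrm{int}}\setm\{y,y'\}$. If $y\neq y'$, then the left side blows up like $d(x,y)^{1-n}$ as $x\to y$ (Proposition \ref{prop: rescaled asymptotics}), while the right side stays smooth near $y$. This is a contradiction, so $y=y'$, and fiber injectivity then gives $\psi=\psi'$.

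\textbf{Immersion.} Differentiability is quoted from \cite{Valero2025}. In a local trivialization near $(y,\psi)$, a tangent vector is a pair $(Y,\phi)$ with $Y\in T_y\widehat M_i$ and $\phi\in(\widehat\cE_i)_y$, and
$$d\cK_i(Y,\phi)=\nabla_{Y}^{(y)}K_i(\cdot,y)\psi+K_i(\cdot,y)\phi .$$
Suppose this vanishes on $W$. By unique continuation it vanishes on $\tilde M_i^{\mathrm{int}}\setm\{y\}$. We then compare singularities as $x\to y$. The $Y$-derivative term is of order $d^{-n}$. By Proposition \ref{prop: symmetry of  MIT Greens kernel} together with the adjoint versions of \eqref{asymptotics of Greens kernel derivatives}, its leading part is $c'_n$ times a derivative of the model kernel $\gamma(e_r)d^{1-n}$ in the $y$-direction, applied to $\psi$, plus lower-order terms. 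The $\phi$-term is only of order $d^{1-n}$. Multiplying by $d^{n}$ and letting $x\to y$ along different directions forces the leading homogeneous term $\partial_Y\big(\gamma(x-y)|x-y|^{-n}\big)\psi$ to vanish identically in the direction. This requires $Y=0$. With $Y=0$, the remaining term is $K_i(\cdot,y)\phi = 0$, and fiber injectivity gives $\phi=0$.

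The main obstacle is making this leading-order computation rigorous. One must check that the $y$-derivative of the model kernel applied to a nonzero spinor cannot vanish for $Y\ne0$. In Euclidean normal coordinates, the relevant expression is $\big(-\gamma(Y)|x|^{2}+n\langle x,Y\rangle\gamma(x)\big)|x|^{-n-2}$ applied to $\psi$. Choosing $x\perp Y$ leaves $-\gamma(Y)\psi\neq0$. We must also control the derivatives in $y$ of the smooth factors $P_i(y)^{-1}$, and of the $O(d^{1-n+\delta})$ error, uniformly enough that they stay below order $d^{-n}$. We would obtain this from the $y$-version of \eqref{asymptotics of Greens kernel derivatives}, supplied via \eqref{symmetry of MIT Greens kernel}.
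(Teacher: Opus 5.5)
Your proposal is correct and follows the paper's argument in essence: continuation from $W$ combined with the singular asymptotics at the pole. You use unique continuation for the Dirac operator where the paper uses the real-analyticity of $K_i$ (Lemma \ref{lemma: K_i is analytic}); either works. Your leading-order computation for $d\cK_i$ also fills in the immersion step, which the paper only defers to \cite{Valero2025}.

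One small correction concerns the second and third steps. There $K_i(\cdot,y)\psi$ and its $y$-derivatives are not themselves harmonic; each is $P_i(x)^{-1}$ times a $\tilde g_i$-harmonic spinor, for instance $\tilde G_i(\cdot,y)\psi/P_i(y)$. So multiply through by $P_i(x)>0$ before invoking unique continuation, exactly as you did in the fiber step.
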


\begin{proof}
    First we show that $\cK_i$ is a linear embedding on each fiber. Indeed, suppose that $\cK_i(\psi_y) = 0$. Then we must have $K_i(x,y)\psi_y = 0$ for all $x \in (W \cap \widehat{M}_i) \setm \{y\}$. By analytic continuation, this must hold for all $x \in \widehat{M}_i \setm \{y\}$. But this contradicts the asymptotics \eqref{rescaled asymptotics}. Hence, we have that the restriction of $\cK_i$ to each fiber is a linear embedding.

    We now prove that the restriction of $\cK_i$ to $\widehat{\cE}_i^0$ is an injective immersion. The proof that the derivative $d \cK_i$ is injective at every point in $\widehat{\cE}_i^{0}$ is completely analogous to the corresponding result in \cite{Valero2025} and also follows from the asymptotics and real-analyticity of the rescaled Green's kernels. Here we shall only prove the injectivity of $\cK_i$ to give an idea of the proof. Thus, suppose that $\cK_i(\psi) = \cK_i(\psi')$ for $y,y' \in \widehat{M}_i$ and non-zero $\psi, \psi'$ in the fiber of $\widehat{\cE}_i$ over $y,y'$, respectively. Then we have $K_i(x,y)\psi = K_i(x,y')\psi'$ for all $x \in (W \cap \widehat{M}_i) \setm \{y,y'\}$. By real-analyticity of the rescaled Green's kernels, we must thus have $K_i(x,y)\psi = K_i(x,y')\psi'$ for all $x \in \widehat{M}_i \setm \{y,y'\}$. This contradicts the asymptotics \eqref{rescaled asymptotics} unless $y = y'$. Since $K_i(\cdot, y)$ is a linear embedding on $(\widehat{\cE}_i)_y$, it follows that we have $\psi = \psi'$. Therefore $\cK_i$ is injective, and the lemma follows.
\end{proof}

As mentioned after equation \eqref{rescaled Green's kernel first appearance}, Proposition \ref{prop: Greens functions agree in U} and the construction of the functions $P_i$ imply that $K_1(x,y) = K_2(x,y)$ for $x,y \in W$ and $x \neq y$, upon making the usual identification of our bundles. In particular, it follows that $\cK_1|_W = \cK_2|_W$ as maps into $H^s(W, \tilde{\cE}_i|_W)$. More precisely, writing out the isomorphism $j_{\cE} : \widehat{\cE}_i|_W \to \widehat{\cE}_i|_W$ explicitly, we have
\begin{equation}\label{intertwining relation on W}
    \cK_2|_W = j_{\cE} \circ \cK_1|_W \circ j_{\cE}^{-1}.
\end{equation}

Given the above, we now want to prove the following theorem, which is similar to corresponding theorems in \cite{LTU2003, KLU2011, LTS2022, EV2024, Gabdurakhmanov2025, Valero2025}, and from which Theorem \ref{main theorem} shall follow. In fact, the proof that we give below is essentially an adaptation of the proof given in \cite{Valero2025} to the LCRA setting.

\begin{theorem}\label{thm: reconstruction step}
    Let $\cK_i : \widehat{\cE}_i \to H^s(W, \tilde{\cE}_i|_W)$ be as above. Then we have $\cK_2(\widehat{\cE}_2) = j_{\cE} (\cK_1(\widehat{\cE}_1))$, and the map $\cK_2^{-1} \circ j_{\cE} \circ \cK_1 : \widehat{\cE}^0_1 \to \widehat{\cE}_2^0$ extends to a unitary isomorphism $\Phi : \widehat{\cE}_1 \to \widehat{\cE}_2$ that is real-analytic in $\widehat{M}_i^{\mathrm{int}}$, and which covers a conformal equivalence $\vp : (\widehat{M}_1, \tilde{g}_1) \to (\widehat{M}_2, \tilde{g}_2)$, such that $\Phi|_W = j_{\cE}$. In addition, there exists a complex line bundle $L$ on $\widehat{M}_2$, a flat connection $\nabla^L$ on $L$, and a unitary isomorphism $\Phi_E : \widehat{E}_1 \to \widehat{E}_2 \otimes L$ also covering $\vp$, such that: $\Phi_E^* (\nabla^{A_2} \otimes \nabla^L) = \nabla^{A_1}$. 
\end{theorem}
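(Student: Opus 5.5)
We follow the standard Lassas--Taylor--Uhlmann continuation argument, adapted as in \cite{Valero2025}. Consider the set $\cC \sub \widehat{M}_1^{\mathrm{int}}$ of points $y$ with the following property. There exists a neighbourhood $V_y$ of $y$ and a real-analytic unitary bundle isomorphism $\Phi : \widehat{\cE}_1|_{V_y} \to \widehat{\cE}_2|_{\vp(V_y)}$, covering a real-analytic open embedding $\vp : V_y \to \widehat{M}_2^{\mathrm{int}}$, such that
\begin{equation*}
K_2(x,\vp(y'))\Phi(\psi) = j_{\cE} K_1(j^{-1}x, y')\psi
\end{equation*}
for all $x\in W$, $y'\in V_y$ and $\psi \in (\widehat{\cE}_1)_{y'}$; equivalently, $\cK_2 \circ \Phi = j_{\cE}\circ \cK_1$ on $\widehat{\cE}_1|_{V_y}$. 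By \eqref{intertwining relation on W}, $\cC$ contains $W\cap \widehat{M}_1^{\mathrm{int}}$, with $\Phi = j_{\cE}$ and $\vp = j$. Since $\widehat{M}_1$ is connected, it suffices to show that $\cC$ is closed. Openness is built into the definition.

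\textbf{Closedness.} Let $y_k \to y_0$ with $y_k \in \cC$. Set $z_k := \vp(y_k)$. First show that $z_k$ stays in a compact subset of $\widehat{M}_2^{\mathrm{int}}$. If instead $z_k$ approached $\d \widehat{M}_2$, or tended to infinity in the Sobolev sense, the norms $\|\cK_2(\Phi\psi_k)\|_{H^s}$ for unit spinors $\psi_k$ would degenerate. Indeed, by \eqref{rescaled asymptotics} and the positivity of $P_2$ these norms are controlled, while the left-hand side $j_{\cE}\cK_1(\psi_k)$ converges in $H^s$ to $j_{\cE}\cK_1(\psi_0)\neq 0$. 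Here the fibrewise-injectivity lemma gives nonvanishing, and the point is to rule out convergence to the boundary or to $\Gamma$, where $P_2$ blows up. Pass to a subsequence with $z_k \to z_0$ and $\Phi(\psi_k)\to \psi'_0$. By continuity of $\cK_2$, we get $\cK_2(\psi'_0) = j_{\cE}\cK_1(\psi_0)$. The next step is to extend $\Phi$ near $y_0$ as $\cK_2^{-1}\circ j_{\cE}\circ \cK_1$. The earlier lemma shows that $\cK_i|_{\widehat{\cE}^0_i}$ are injective $C^1$ immersions with equal images near $\psi_0,\psi_0'$. Combined with the asymptotics \eqref{rescaled asymptotics}, which make the images locally closed manifolds of the same dimension, a local inverse-function argument defines $\Phi$ on a neighbourhood of $y_0$. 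Real-analyticity of $\Phi$ and $\vp$ follows from Lemma \ref{lemma: K_i is analytic}: the identity $K_2(x,\vp(y))\Phi(y) = K_1(x,y)$ can be solved analytically by pairing against finitely many test points $x\in W$. Linearity and fibrewise preservation come from the fibrewise linear embedding property. Applying the same argument with the roles of $1$ and $2$ reversed gives surjectivity, hence $\cK_2(\widehat{\cE}_2) = j_{\cE}(\cK_1(\widehat{\cE}_1))$.

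\textbf{Geometry of $\Phi$.} From $K_2(x,\vp(y))\Phi_y = K_1(x,y)$, apply $D_{\tilde g_2,\tilde A_2}$ in $x$ via \eqref{symmetry of MIT Greens kernel}. The singularity structure \eqref{rescaled asymptotics} then forces $\vp$ to preserve the leading singularity $c_n\gamma(e_r)d^{1-n}/P^2$ up to the factor $P_2(\vp(y))^{-2}P_1(y)^2$. Comparing the singular parts shows that $\vp$ is conformal, with $\vp^*\tilde g_2 = (P_1/P_2\circ\vp)^{4/(n-1)}\tilde g_1$. It also shows that $\Phi$ intertwines Clifford multiplication, $\Phi\gamma_1(X) = \gamma_2(d\vp X)\Phi$ after the canonical conformal identification, and that it is unitary. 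Using \eqref{Dirac operator under conformal change exp version}, $\Phi$ then intertwines the twisted Dirac operators, and in particular the connections $\nabla^{A}$ on $\bS\otimes E$. An endomorphism of $\bS\otimes E$ commuting with Clifford multiplication has the form $\id_{\bS}\otimes T$. Comparing $\Phi$ with the map $\hat\vp$ induced on spinor bundles by the conformal frame-bundle isomorphism, which is defined only up to sign, leads to the required objects. The discrepancy between the two spin structures defines a flat real line bundle, whose complexification is $L$, and $\Phi = \hat\vp\otimes\Phi_E$ with $\Phi_E : \widehat E_1 \to \widehat E_2\otimes L$ pulling back $\nabla^{A_2}\otimes\nabla^L$ to $\nabla^{A_1}$. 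Finally, $\Phi|_W = j_{\cE}$ holds by construction.

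\textbf{Main obstacle.} The step I expect to be hardest is the compactness in the closedness argument, namely excluding $z_k$ escaping to $\d\widehat M_2$. This will require a careful $H^s$ lower bound for $\cK_2$ near the boundary in terms of $P_2$.
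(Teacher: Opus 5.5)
\textbf{The main gap: excluding a boundary limit.} The gap is in the step you flag yourself, excluding a limit $z_0=\lim \vp(y_k)\in \d\widehat{M}_2$, and the mechanism you propose does not exist.
\begin{itemize}
\item The set $\Gamma$ is disjoint from $\widehat{M}_2$, because the $\varepsilon$-collar of $\d\tilde{M}_2$ outside $M_2$ was removed.
\item $P_2$ is smooth and positive up to $\d M_2\setm\sU$, so $\cK_2$ is continuous on the fibres over $\d M_2\setm\sU$ and nothing blows up there.
\item \eqref{rescaled asymptotics} describes $x\to y$ along the diagonal, not $y\to\d\widehat{M}_2$.
\end{itemize}
So no $H^s$ estimate in terms of $P_2$ can rule out $z_0\in\d M_2\setm\sU$.

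The missing idea is the MIT boundary condition. For $z_0\in\d M_2\setm\sU\sub\d\tilde M_2$ one has $\bP^-\tilde G_2^-(z_0,x)=0$. By \eqref{symmetry of MIT Greens kernel}, $\tilde G_2(x,z_0)=\tilde G_2^-(z_0,x)^*$ therefore annihilates $\bW^-_{z_0}$ for every $x\in W$, so $\cK_2$ is not injective on the fibre over $z_0$. Because the $\Phi_{y_k}$ are unitary, in local orthonormal frames a subsequence converges to a unitary $U$ with $j_{\cE}\circ\cK_1|_{y_0}=\cK_2|_{z_0}\circ U$. Then $\cK_1$ would have a kernel on the fibre over the interior point $y_0$, contradicting the fibrewise-embedding lemma. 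The paper phrases this as $|\det K_1(x,y_0)|=|\det K_2(x,z_0)|=0$ on $W$, followed by analytic continuation and \eqref{rescaled asymptotics}.

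So the degeneration you need occurs along spinors approaching $\bW^-_{z_0}$. Its cause is the boundary condition, not $P_2$, and unitarity of $\Phi$ is what transfers it to $\cK_1$. The part of $\d\widehat M_2$ lying in $W$ must be excluded separately. There $\vp=\id$, and injectivity of $\cK_1$ forces $y_k=j^{-1}(z_k)\to j^{-1}(z_0)\in\d\widehat M_1$, contradicting that $y_0$ is interior.

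\textbf{The continuation step is circular as written.} The immersion lemma does not give ``equal images near $\psi_0,\psi_0'$''; that is exactly what has to be proved. In the paper the argument runs as follows.
\begin{itemize}
\item The tangent spaces of $j_{\cE}(\sE_1)$ and $\sE_2$ coincide at $\zeta$.
\item Both images are locally graphs of real-analytic maps $\cF_1,\cF_2$ over a common $\sV$.
\item These maps agree on the nonempty open set coming from points already in your set $\cC$, hence near $\pi_{\sV}(\zeta)$ by analytic continuation.
\end{itemize}
Only after this do you get fibrewise coincidence near $y_0$. Unitarity of the extended $\Phi$ also follows by analytic continuation of $\Phi^*\Phi=\id$; your definition of $\cC$ requires it and your compactness step uses it. Analyticity therefore has to come before the local definition of $\Phi$, not after.

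\textbf{The geometry section.} Your conformality argument by comparing leading singularities is a legitimate alternative to the paper's LCRA continuation of the conformal class (Lemma \ref{lemma: vp is conformal equivalence}), and it gives the explicit factor $(P_1/P_2\circ\vp)^{4/(n-1)}$. It first needs the identity $\Phi_xK_1(x,y)=K_2(\vp(x),\vp(y))\Phi_y$ for all $x\neq y$, obtained by analytic continuation in $x$ from $W$. Your displayed identity has $x\in W$, where there is no singularity to compare. The singular parts also do not yield unitarity: Clifford intertwining only forces $\Phi=\hat\vp\otimes T$ with $T$ merely invertible. For the connection, the paper avoids your Dirac-intertwining detour by analytically continuing the gauge relation from $W$, using admissibility.
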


\begin{proof}
    We shall focus on proving Theorem \ref{thm: reconstruction step} in dimensions $n \geq 3$. The proof for dimension $2$ is identical, except for the fact that we do not have the auxiliary bundles $E_i$ and connections $A_i$ to worry about; the last sentence in the statement of the theorem can thus be dropped in this case.
    
    Let us thus suppose that $n \geq 3$. To prove this theorem, fix $p_1 \in W \cap \widehat{M}_1$ and define the set $\Omega_1 \sub \widehat{M}_1$ to be the largest open connected set containing $p_1$ such that for all $x_1 \in \Omega_1$, there is a unique point $x_2 \in \widehat{M}_2$ such that the images $\cK_2((\widehat{\cE}_2)_{x_2})$ and $j_{\cE}(\cK_1(\widehat{\cE}_1)_{x_1})$ coincide, and the map
    \begin{equation}
        \Phi_{x_1} := \cK_2^{-1} \circ j_{\cE} \circ \cK_2 : 
        (\widehat{\cE}_1)_{x_1} \to (\widehat{\cE}_2)_{x_2}
    \end{equation}
    is a unitary isomorphism of the fibers with respect to their induced inner products. Note that the intertwining relation \eqref{intertwining relation on W} implies that $\Omega_1$ contains $W \cap \widehat{M}_1$. Indeed, the restriction of $\Phi$ to $W \cap \widehat{M}_1$ is $j_{\cE}$. For $x_1 \in \Omega_1$, we will denote the corresponding point $x_2$ in $\widehat{M}_2$ by $\vp(x)$.
    
    We now want to show that $\Omega_1 = \widehat{M}_1$, by way of contradiction. If this is not true, then there is $x_1 \in \partial \Omega_1 \cap \widehat{M}_1^{\mathrm{int}}$. The first step is to show that the map $\Phi$ defined above can be extended to the fiber $(\widehat{\cE}_1)_{x_1}$ as a unitary isomorphism. To this end, we have the following technical lemma, which is essentially the same as in \cite[Lemma 4.5]{Valero2025}. We include the proof for completeness, and since it makes use of the MIT boundary conditions for the Green's kernels.

\begin{lemma}\label{lemma: sequence lemma}
    Let $x_1 \in \d \Omega_1 \setm \d \widehat{M}_1$. Then there is a unique point $x_2 \in \widehat{M}^{\mathrm{int}}_2$ such that the images $j_{\cE} \circ \cK_1((\widehat{\cE}_1)_{x_1})$ and $\cK_2((\widehat{\cE}_2)_{x_2})$ coincide, and the map $\Phi_{x_1} : (\widehat{\cE}_1)_{x_1} \to (\widehat{\cE}_2)_{x_2}$ defined above is unitary. Moreover, for any non-zero $v_{x_1} \in (\widehat{\cE}_1)_{x_1}$, there is a unique non-zero $w_{x_2} \in (\widehat{\cE}_2)_{x_2}$ satisfying
        \begin{equation}
            j_{\cE} \circ \sG_1(v_{x_1}) = \sG_2(w_{x_2}), \ \ \ \ \ |v_{x_1}|_{\cE_1} = |w_{x_2}|_{\cE_2},
        \end{equation}
        and for any sequence $v_{p_k} \to v_{x_1}$ with $p_k \to x_1$, we have $\Phi(v_{p_k}) \to w_{x_2}$.
\end{lemma}

\begin{proof}
    Let $p_k$ be a sequence in $\Omega_1$ with $p_k \to x_1$, and let $q_k := \vp(p_k)$. The images $j_{\cE} \circ \cK_1((\widehat{\cE}_1)_{p_k})$ and $\cK_2((\widehat{\cE}_2)_{q_k})$ then coincide, and $\Phi_{p_k}$ is unitary. Since $\widehat{M}_2$ is compact, we may assume by passing to a subsequence that $q_k \to x_2$ for some $x_2 \in \widehat{M}_2$. We want to show that $x_2$ cannot be in $\d \widehat{M}_2$. So let us assume that $x_2 \in \d \widehat{M}_2$. If $x_2 \in \d \widehat{M}_2 \setm \d M_2$, then for large enough $k$, we have that $p_k$ lies in $W \cup U_1$ (here, $U_1$ is the neighbourhood of $\sU$ given by Proposition \ref{prop: extension in analytic gauge}) in which the identifications of our manifolds and bundles made at the end of Section \ref{sec: extension} still hold. Note that $\Phi|_{W \cup\, U} = j_{\cE}$, which covers the identity map (since we are identifying $\widehat{M}_1$ and $\widehat{M}_2$ in this region). Therefore, it follows that $x_2 \notin \d \widehat{M}_2$, since $x_1 \notin \d \widehat{M}_1$, which yields a contradiction. 
    
    So if $x_2 \in \d \widehat{M}_2$, we must therefore have $x_2 \in \d M_2 \setm \sU$, which we recall forms part of $\d \tilde{M}_2$. In this case, since $p_k \in \Omega_1$ for all $k$, we have
    \begin{equation}
        j_{\cE} \circ \cK_1|_{p_k} = \cK_2|_{q_k} \circ \Phi_{p_k}
    \end{equation}
    as maps on $(\widehat{\cE}_1)_{p_k}$. This implies that
    \begin{equation}\label{sequence lemma eqn 1}
        j_{\cE}\circ K_1(x,p_k) = K_2(x,q_k) \circ \Phi_{p_k}
    \end{equation}
    for all $x \in W$. Since $j_{\cE}$ and $\Phi_{p_k}$ are unitary maps, by choosing orthonormal frames near $x_1$ and $x_2$ and taking determinants of \eqref{sequence lemma eqn 1}, we obtain, for all $x \in W$,
    \begin{equation}
        |\det K_1(x,p_k)| = |\det K_2(x,q_k)|.
    \end{equation}
    Taking limits as $k \to \infty$, we have
    \begin{equation}\label{sequence lemma eqn 2}
        |\det K_1(x,x_1)| = |\det K_2(x,x_2)|.
    \end{equation}
    Now, since $x_2 \in \d M_2 \setm \sU \sub \d \tilde{M}_2$, we have $\bP^{-} \tilde{G}_2^{-}(x_2,y) = 0$ for all $y \in \tilde{M}_2^{\mathrm{int}}$, where $\tilde{G}_2^{-}(x,y)$ is the Green's kernel for MIT$^-$ boundary conditions on $\tilde{M}_2$ (see the discussion preceding \eqref{symmetry of MIT Greens kernel}). But, this implies that $\tilde{G}^{-}_2(x_2, x)$, and by \eqref{symmetry of MIT Greens kernel} also $\tilde{G}_2(x,x_2)$, has non-trivial kernel for all $x \in W$. In particular, since $P_i(x_i)$ and $P_i(x)$ are well-defined, \eqref{sequence lemma eqn 2} becomes
    \begin{equation}\label{sequence lemma eqn 3}
        |\det K_1(x,x_1)| = 0
    \end{equation}
    for all $x \in W$. By analyticity of the rescaled Green's kernels, \eqref{sequence lemma eqn 3} must hold for all $x \in \widehat{M}_1^{\mathrm{int}} \setm \{x_1\}$. But this contradicts the asymptotics given by Proposition \ref{prop: rescaled asymptotics}, which imply that
    \begin{equation}
        \lim_{x \to x_1} \frac{|\det K_1(x,x_1)|}{d_1(x,y)^{N(1-n)}} = \frac{c_n^N}{P_i(x_1)^{2N}},
    \end{equation}
    where $N = \rank{\cE_1}$. This yields a contradiction, and thus $x_2 \notin \d \widehat{M}_2$.

    The rest of the lemma follows straightforwardly. Let $v_{x_1} \in (\widehat{\cE}_1)_{x_1} \setm \{0\}$, and choose a sequence of non-zero vectors $v_k \in (\widehat{\cE}_1)_{p_k}$ converging to $v_{x_1}$. Let $w_k := \Phi_{p_k}(v_k)$, so that $|v_k| = |w_k|$. Since $w_k$ is bounded, by passing to a subsequence, we may assume that $w_k \to w_{x_2}$ for some $w_{x_2} \in (\hat{\cE}_2)_{x_2}$. Since $x_2$ is an interior point, the well-definition and uniqueness of $w_{x_2}$ follows from the continuity of $\cK_i$ and the fact that $\cK_i$ is a linear embedding on the fibers.
\end{proof}

The rest of the proof of Theorem \ref{thm: reconstruction step} is a straightforward adaptation of the proofs of the corresponding theorems in \cite{Gabdurakhmanov2025, Valero2025}, for example, to the conformally covariant setting. In the following, we shall therefore content ourselves with an overview of the argument, referring the reader to the aforementioned references, especially \cite{Gabdurakhmanov2025}, for a more detailed exposition.

Let $x_2$ be the point identified in Lemma \ref{lemma: sequence lemma}. Using Lemma \ref{lemma: sequence lemma}, we want to show that a neighbourhood of $x_1$ must be contained in $\Omega_1$. This contradicts the definition of $x_1$, and therefore shows that we must have $\Omega_1 = \widehat{M}_1$. To this end, let $\sE_i := \cK_i(\widehat{\cE}_i)$. Take $v_{x_1} \in (\widehat{\cE}_1)_{x_1} \setm 0$, and let $x_2 \in \widehat{M}^{\mathrm{int}}_2$ and $w_{x_2} \in (\widehat{\cE}_2)_{x_2}$ be the point and vector given by Lemma \ref{lemma: sequence lemma}. Then by construction, we have $j_{\cE}(\cK_1(v_{x_1})) = \cK_2(w_{x_2})$. Let us denote this common value in $H^s(W, \tilde{\cE}|_W)$ by $\zeta$. Since $\cK_i$ is an injective immersion on $\widehat{\cE}_i^0$, we have that $j_{\cE}(\sE_1)$ and $\sE_2$ are submanifolds of $H^s(W, \tilde{\cE}|_W)$ near $\zeta$, whose tangent spaces at $\zeta$ are given by the images of the differentials $D(j_{\cE} \circ \cK_1)|_{v_{x_1}}$ and $D\cK_2|_{w_{x_2}}$, respectively. Since $\cK_i$ is $C^1$, Lemma \ref{lemma: sequence lemma} implies that $T_{\zeta} j_{\cE}(\sE_1)$ and $T_{\zeta}\sE_2$ coincide as subspaces of $H^s(W, \tilde{\cE}|_W)$; we shall denote this subspace by $\sV$.

Now, observe that if $\pi_{\sV} : H^s(W, \tilde{\cE}|_W) \to \sV$ denotes the orthogonal projection, then $\pi_{\sV} \circ \cK_2$ has invertible derivative near $w_{x_2}$. Therefore, the inverse function theorem gives a $C^1$ map $\cR_2 : \sO \to \widehat{\cE}_2$ defined in a neighbourhood $\sO$ of $\pi_{\sV}(\zeta)$. Moreover, on $\sO$ the image $\sE_2$ is the graph of the map
\begin{equation}\label{F_2 local graph definition}
        \cF_2 : \sO \to \sV^{\bot}, \ \ \ \ \ \cF_2(v) := \cK_2(\cR_2(v)) - v.
\end{equation}
A similar argument yields a $C^1$ map $\cR_1 : \sO \to \hat{\cE}_1$ that locally inverts $j_{\cE} \circ \cK_1$ near $\pi_{\sV}(\zeta)$. Defining the map $\cF_1 :\sO \to \sV^{\bot}$ in analogy with \eqref{F_2 local graph definition}, we have that $j_{\cE}(\sE_1)$ is locally the graph of $\cF_1$ near $\zeta$. It is easy to see that the isomorphism $\Phi$ coincides with $\cR_2 \circ \cR_1^{-1}$ on $\cR_1(\sO) \cap \widehat{\cE}_1|_{\Omega_1}$. We have the following lemma, whose proof is essentially the same as that of \cite[Lemma 3.6, 3.7]{Gabdurakhmanov2025}:

\begin{lemma}\label{lemma: graph functions are real-anal}
        $\cR_i : \sO \to \widehat{\cE}_i$ is real-analytic in a neighbourhood of $\pi_{\sV}(\zeta)$ in $\sV$. Moreover, the maps $\cF_1, \cF_2 : \sO \to \sV^{\bot}$ coincide in a neighbourhood of $\Pi_{\sV}(\zeta)$ in $\sV$.
\end{lemma}

It follows from Lemma \ref{lemma: graph functions are real-anal} that there is a neighbourhood of $v_{x_1}$ in $\widehat{\cE}_1$ on which $\cR_2 \circ \cR_1^{-1}$ is real-analytic. Moreover, since $\cF_1 = \cF_2$ near $\pi_{\sV}(\zeta)$ implies that the images $j_{\cE}(\sE_1)$ and $\sE_2$ coincide near $\zeta$. The following lemma states that one can use this information to show that the images of $j_{\cE} \circ \cK_1$ and $\cK_2$ coincide fiber-wise near $x_1$ and $x_2$:

 \begin{lemma}\label{lemma 3.8 gab}
        There is a neighbourhood $\sN_1$ of $x_1$ such that for all $x \in \sN_1$, there exists $y \in \widehat{M}^{\mathrm{int}}_2$ such that the images $j_{\cE} \circ \cK_1((\widehat{\cE}_1)_x)$ and $\cK_2((\widehat{\cE}_2)_y)$ coincide.
    \end{lemma}

The proof of Lemma \ref{lemma 3.8 gab} is again the same as that of \cite[Lemma 3.8]{Gabdurakhmanov2025}. In particular, we note that Lemma \ref{lemma 3.8 gab} implies that $\Phi_x := \cK_2^{-1} \circ j_{\cE} \circ \cK_1$ is a well-defined map from $(\widehat{\cE}_1)_x$ to $(\widehat{\cE}_2)_y$ for all $x \in \sN_1$. Moreover, by Lemma \ref{lemma: graph functions are real-anal}, the map $\Phi_x$ coincides with $\cR_2 \circ \cR_1^{-1}$ on a neighbourhood $\sN_v$ of $v_{x_1}$ in $\widehat{\cE}_1$. Therefore, $\cR_2 \circ \cR_1^{-1}$ is an isometry on the fibers over $\sN_v \cap \widehat{\cE}_1|_{\Omega_1}$, since $\Phi$ is. From this it follows by analytic continuation that $\cR_2 \circ \cR_1^{-1}$ is an isometry on the fibers over $\sN_v$. Since this argument holds for all $v_{x_1}$ in the unit sphere of $(\widehat{\cE}_1)_{x_1}$, it follows that $\Phi_x$ is an isometry of the fibers for all $x \in \sN_1$. This implies that $\sN_1 \subseteq \Omega_1$. But we have assumed that $x_1$ was a boundary point of $\Omega_1$ in $\widehat{M}_1^{\mathrm{int}}$. We thus get a contradiction, whence it follows that $\Omega_1 = \widehat{M}_1$.

We now finish the proof of Theorem \ref{thm: reconstruction step}. The map $\Phi := \cK_2^{-1} \circ j_{\cE} \circ \cK_1$ is a well-defined fiber-preserving map, which, as shown above, agrees locally with the maps $\cR_2 \circ \cR_1^{-1}$ and is therefore a real-analytic unitary isomorphism of bundles $\widehat{\cE}_1 \to \widehat{\cE}_2$. In particular, $\Phi$ covers a real-analytic diffeomorphism $\vp : \widehat{M}_1 \to \widehat{M}_2$. By construction, we have $\Phi|_W = j_{\cE}$ and $\vp|_W = \id$. In particular, over $W$ we have $\vp^* \tilde{g}_1 = \tilde{g}_2$. All that remains is to show that $\vp$ is a conformal equivalence. Note that since $\tilde{g_1}$ and $\tilde{g}_2$ are not real-analytic, this does not simply follow by analytic continuation from $W$ as in \cite{Valero2025}. Instead we use the fact that these are LCRA metrics to prove the following:

\begin{lemma}\label{lemma: vp is conformal equivalence}
    The map $\vp : \widehat{M}_1 \to \widehat{M}_2$ defined above is a conformal equivalence.
\end{lemma}

\begin{proof}
    Let $\sC_1$ be the largest open subset of $\widehat{M}_1$ where $\vp^*\tilde{g}_2$ is conformal to $\tilde{g}_1$. Since $\vp^* \tilde{g}_2 = \tilde{g}_1$ over $W$, it follows that $W \sub \sC_1$. Now suppose that $x_* \in \d \sC_1 \setm \d \widehat{M}_1$. Since the metrics are LCRA, we can choose coordinates around $x_*$ and $\vp(x_*)$ so that $\tilde{g}_i$ is locally represented by $e^{2 \alpha_i} h_i$, where $h_i$ is an analytic metric. Since $\vp$ is a real-analytic map, we can also assume that $\vp$ is represented by a real-analytic function with respect to these coordinates. Then the metric $\vp^* \tilde{g}_2$ is represented by $e^{2 \tilde{\alpha}_2} \tilde{h}_2$, where $\tilde{h}_2$ is analytic. Now define the metrics $\bar{g}_2 := e^{-2\tilde{\alpha}_2} \vp^*\tilde{g}_2$ and $\bar{g}_1 := e^{-2\alpha_1} \tilde{g}_1$, which are both real-analytic in a neighbourhood $\sO$ of $x_*$. Note that $\bar{g}_2 = c \bar{g}_1$ on $\sO \cap \sC_1$ for some smooth non-zero function $c$. Therefore, on $\sO \cap \sC$ we have $\bar{g}_2 \bar{g}_1^{-1} = c \cdot \id$, whence it follows by analytic continuation that $\bar{g}_1$ and $\bar{g}_2$ are conformal in a neighbourhood of $x_*$. This is hence also true for $\tilde{g}_1$ and $\tilde{g}_2$. This contradicts $x_* \in \d \sC_1 \setm \d\widehat{M}_1$, and so we get $\sC_1 = \widehat{M}_1$.
\end{proof}

\begin{remark}
    One can think of the above proof of Lemma \ref{lemma: vp is conformal equivalence} as analytic continuation of the {\em conformal class} of $\tilde{g}_i$ from $W$. Indeed, the conformal class of a metric $g$ on $M$ can be thought of as a section of the bundle $\cC \to M$ of conformal classes of metrics, defined as the quotient of the bundle of metrics $\cM \to M$ by the natural action of $\R^+$. If the base manifold $M$ is LCRA, then by Proposition \ref{prop: LCRA manifolds have analytic structure} there is a canonical analytic structure on its interior, and thus the bundle of conformal classes $\cC$ inherits the structure of an real-analytic bundle, of which LCRA metrics are automatically real-analytic sections, since they have real-analytic conformal representatives. That $\vp^*[\tilde{g}_2] = [\tilde{g}_1]$ on $\widehat{M}_1$ then follows by analytic continuation since this holds on $W$.
\end{remark}
\vspace{5pt}

We now finish the proof of Theorem \ref{main theorem}. Note that we cannot conclude that $\Phi$ preserves the twisted spin connections $\nabla^{A_i}$ on $\widehat{\cE}_i$ as in \cite{Valero2025}, since the spin connection on $\bS_i$ is not real-analytic. Nevertheless we may proceed as follows. Let us pull back all bundles on $\widehat{M}_2$ to $\widehat{M}_1$ via the real-analytic diffeomorphism $\vp$, so that we can work over the same manifold $\widehat{M}_1$. In particular, $\Phi : \widehat{\cE}_1 \to \widehat{\cE}_2$ becomes a vertical isomorphism over the identity on $\widehat{M}_1$. Note that $\bS_2$ need not be globally isomorphic to $\bS_1$, even though it is the spinor bundle of a metric conformal to $\tilde{g}_1$, since $\bS_2$ may in fact correspond to a different choice of spin structure (see Remark \ref{remark: spin structure construction}). In general, we can only say that $\bS_2 \cong \bS_1 \otimes L$, where $L$ is some complex line bundle on $\widehat{M}_1$ that is the complexification of a real line bundle \cite[\S 2.5]{Friedrich2000}. It is easy to see that $L$ is real-analytic in $\widehat{M}^{\mathrm{int}}_1$. Moreover, $L$ is trivial over $W$. Indeed, the identification $j_S : \widehat{\bS}_1|_W \to \widehat{\bS}_2|_W$ induces a trivialization $j_L : L|_{W} \to \C$.

We thus have a real-analytic unitary isomorphism $\Phi : \widehat{\bS}_1 \otimes \widehat{E}_1 \to \widehat{\bS}_1 \otimes \widehat{E}_2 \otimes L$. Taking a partial trace of $\Phi$ over $\bS_1$ yields a unitary isomorphism
\begin{equation}
    \Phi_E := \frac{1}{\rank{\bS_1}} \Tr_{\,\bS_1} \Phi : \widehat{E}_1 \to \widehat{E}_2 \otimes L,
\end{equation}
which by construction satisfies $\Phi_E|_{W} = j_{E} \otimes j_L^{-1}$, identifying $\widehat{E}_1 \otimes \C$ with $\widehat{E}_1$. The same argument as in \cite[\S 4.2]{Valero2025} then proves the following lemma:

\begin{lemma}\label{lemma: flat connection}
    There is a flat connection $\nabla^L$ on $L$ such that $\Phi_E^*(\nabla^{A_2} \otimes \nabla^L) = \nabla^{A_1}$.
\end{lemma}

One can prove Lemma \ref{lemma: flat connection} by first choosing any flat connection $\nabla^{L'}$ on $L$, and then defining a real-analytic $\End(\widehat{E}_2 \otimes L)$-valued $1$-form $a$ on $\widehat{M}_1$ by
\begin{equation}
    a := (\Phi_E^{-1})^* \nabla^{A_1} - \nabla^{A_2} \otimes \nabla^{L'}.
\end{equation}
Since all the connections here are unitary, we can define $b \in \Omega^1(\widehat{M}_1, i\R)$ as a partial trace:
\begin{equation}
    b := \frac{1}{\rank{E}_1} \Tr_{E_1} a.
\end{equation}
One can check that $\nabla^L := \nabla^{L'} - b$ satisfies $\Phi_E^*(\nabla^{A_2} \otimes \nabla^L) = \nabla^{A_1}$ over $W$, which must hold on the rest of $\widehat{M}_1$ by analytic continuation. We refer the reader to \cite[\S 4.2]{Valero2025} for details. This completes the proof of Theorem \ref{thm: reconstruction step}, and therefore also that of Theorem \ref{main theorem}.
\end{proof}

As in \cite{Valero2025}, when the line bundle $L$ above is known to be trivial, such as if the auxiliary bundles are absent or known to be isomorphic, then analytic continuation shows that we have an isomorphism of Clifford module bundles $\Phi_{\bS} : \bS_1 \to \bS_2$. It is known that this then uniquely determines the $\Spin^c$ structures associated to $\bS_1$ and $\bS_2$ \cite{Plymen1986}. In general, however, since we are working with {\em complex} spinor bundles, it is not possible in general to determine the underlying real spin structures, as different spin structures may in fact be isomorphic as $\Spin^c$ structures. Moreover, Example 4.8 in \cite{Valero2025} shows that we cannot in general do away with the line bundle $L$ in the setting of twisted spinor bundles, since in general we cannot distinguish $\bS \otimes (L \otimes E)$ from $(\bS \otimes L) \otimes E$.

\section*{Acknowledgments}

This work has received funding from the European Research Council (ERC) under the European Union's Horizon 2020 research and innovation programme through the grant agreement 862342. The author is also partially supported by the grants CEX2023-001347-S, RED2022-134301-T and PID2022-136795NB-I00, as well as PID2024-156055NA-I00 and CEX2023-001347-S, funded by MICIU/AEI/10.13039/501100011033.

\vspace{10pt}

\appendix

\section{Linear and canonical equivalence of BC maps}\label{sec: appendix L}

In this appendix, we prove Proposition \ref{prop: linear vs canonical equivalence}. Since the statement only concerns the spinor bundles, we shall forget about the auxiliary connection in this section. Let us therefore suppose that $\Theta_{g_1,\sU}$ is linearly equivalent to $\Theta_{g_2,\sU}$ in the sense of Remark \ref{remark: linear equivalence}. That is, we suppose that there exists a vector bundle isomorphism $F : \bS_1|_{\sU} \to \bS_2|_{\sU}$ that intertwines the BC maps:
\begin{equation}\label{appendix L equiv intertwining map}
    F \circ \Theta_{g_1,\sU} \circ F^{-1} = \Theta_{g_2,\sU}.
\end{equation}
In particular, we must have $F(\bW_1^{\pm}) = \bW_2^{\pm}$. As a consequence, we also have
\begin{equation}\label{appendix L equiv intertwining normals}
    F \circ \gamma_1(\nu_1) \circ F^{-1} = \gamma_2(\nu_2).
\end{equation}
Now, note that if \eqref{appendix L equiv intertwining map} holds, then $F$ also intertwines the principal symbols of the BC maps, which are given by Proposition \ref{prop: principal symbol of BC map}. We therefore have for all $\x \in T^*\sU \setm 0$:
\begin{equation}\label{appendix L equiv intertwining of symbols}
    \frac{1}{|\x|_1} F \circ \gamma_1(\x) \circ F^{-1} = \frac{1}{|\x|_2} \gamma_2(\x).
\end{equation}
In particular, by the same argument as in Remark \ref{remark: principal symbol determines conformal class}, we find that the pullback metrics $g_1|_{\sU}$ and $g_2|_{\sU}$ are conformal. Let us say that $g_2|_{\sU} = e^{2u} g_1|_{\sU}$ for some $u \in C^{\8}(\sU)$. Now let $(e^1_a)$ be an orthonormal frame for $g_1|_{\sU}$. Then $(e^2_a)$ defined by $e^2_a := e^{-u} e^1_a$ is an orthonormal frame for $g_2|_{\sU}$. Moreover, the intertwining relation \eqref{appendix L equiv intertwining of symbols} then implies that  
\begin{equation}\label{appendix L equiv intertwining of basis}
    F \circ \gamma_1(e^1_a) \circ F^{-1} = \gamma_2(e^2_a)
\end{equation}
for $a =1, \dots, n-1$. Note that we are identifying Clifford multiplication by a $1$-form with Clifford multiplication by the corresponding vector under the metric isomorphism, as usual.

We now want to define a map of principal $SO(n)$ bundles, $Q : P_{\SO}(g_1)|_{\sU} \to P_{\SO}(g_2)|_{\sU}$ over $\sU$ as follows. First, note that it suffices to specify $Q$ on an orthonormal frame over $\sU$, and extend it to all other frames by $SO(n)$-equivariance. We thus define $Q$ by
\begin{equation}
    Q(e^1_a, \nu_1) := (e^2_a, \nu_2).
\end{equation}
Let $\hat{Q} : \bS_1|_{\sU} \to \bS_2|_{\sU}$ be the induced isomorphism of spinor bundles. By construction, we have
\begin{equation}\label{appendix L equiv conjugation by Q}
    \hat{Q} \circ \gamma_1(e^1_k) \circ \hat{Q}^{-1} = \gamma_2(e^2_k),
\end{equation}
for $k =1, \dots, n$, where $e^i_n := \nu_i$. Therefore, combining \eqref{appendix L equiv intertwining normals}, \eqref{appendix L equiv intertwining of basis}, and \eqref{appendix L equiv conjugation by Q}, we have
\begin{equation}
    F^{-1}\hat{Q} \circ \gamma_1(e_k^1) \circ \hat{Q}^{-1} F = \gamma_1(e_k^1).
\end{equation}
So $\gamma_1(X) \in \End{\bS_1}$ is invariant under conjugation by $F^{-1}\hat{Q}$, for all $X \in TM|_{\sU}$. Since the Clifford algebra is generated multiplicatively by vectors, it follows that $\gamma(\alpha)$ is invariant under conjugation by $F^{-1} \hat{Q}$ for all $\alpha \in \Cl(M)|_{\sU}$. On the other hand, since $\bS$ is an {\em irreducible} module for the Clifford bundle $\Cl(M)$, Schur's lemma implies that $F^{-1} \hat{Q}$ is a non-zero multiple of the identity at every point. We thus obtain that $F = f \hat{Q}$ for some smooth non-vanishing $f \in C^{\8}(\sU)$.

We now want to identify the function $f$ as the conformal scaling factor indicated in Proposition \ref{prop: linear vs canonical equivalence} when $n \geq 3$. Note that from equation \eqref{appendix L equiv intertwining map} we have
\begin{equation}\label{appendix L intertwining map v2}
    f \hat{Q} \circ \Theta_{g_1, \sU} \circ f^{-1} \hat{Q}^{-1} = \Theta_{g_2, \sU}.
\end{equation}
That is, in the notation of \eqref{conformal change of BC map equivalence}, we have $f \Theta_{g_1, \sU} f^{-1} \sim \Theta_{g_2, \sU}$, where $\sim$ here denotes a {\em canonical} equivalence. To identify the function $f$, we want to compute the subprincipal symbol of both sides with respect to a canonical spinor frame induced by a boundary orthonormal frame $(e_k^i)$ as above. Letting $\vartheta_{-1}^i : \bW_i^+ \to \bW_i^{-}$ denote the subprincipal symbol of $\Theta_{g_i, \sU}$, we find
\begin{equation}\label{appendix L subprincipal LHS} 
     \vartheta^1_{-1}(x',\x) - i \d_{\x} \vartheta^1_{0}(x',\x) \cdot \d_{x'} \log{f^{-1}}  = \vartheta^2_{-1}(x',\x),
\end{equation}
where we are identifying the spinor bundles over $\sU$ via the canonical equivalence $\hat{Q}$, and $x'$ denotes coordinates {\em along} the boundary over $\sU$. On the other hand, recall that $g_2|_{\sU} = e^{2u} g_1|_{\sU}$ for some $u \in C^{\8}(\sU)$. It is thus possible to write down an explicit expression for $\vartheta_{-1}^2$ in terms of $\vartheta_{-1}^1$ and $u$. This computation is carried out in \cite{Valero2025}, to which we refer the reader for a more detailed exposition; recall that the full symbol of the MIT BC map is formally identical to that of the chiral BC map, replacing $\gamma(\nu)$ by $\pm i$ as appropriate. We thereby arrive at (c.f. equation (144) in \cite{Valero2025}),

\begin{equation}\label{appendix L equiv subprincipal comparisson}
    \left( \vartheta^2_{-1} \right)^{\fe}(x',\x) = \left(\vartheta^1_{-1}\right)^{\fe}(x',\x) - \frac{i}{|\x|_1^3} \cT^{\a \b}(x') \x_{\a}\x_{\b}
\end{equation}
where $a, \a, \b$ are boundary indices that range over $1,\dots,n-1$, the superscript $\fe$ indicates that we are taking the part that is even under $\x \mapsto -\x$, and the tensor $\cT^{\a \b}$ is given by
\begin{align}
    \cT^{\a \b} &:= -\frac{n-2}{2} g_1^{\a \b} h^{\lambda}_{\ a} (\d_{\lambda} u) \gamma^a  + \left( \frac{n-1}{2} g_1^{\lambda \a} \d_{\lambda} u + \frac{1}{2} g_1^{\sigma \a} h^{\lambda}_{\ b} (h^{-1})^c_{\ \sigma} (\d_{\lambda} u) \gamma^b \gamma_c \right) \gamma^a h^{\beta}_{\ a}. 
\end{align}
Here, $\gamma^a := \gamma_1(e_a^1)$, and $h^{\a}_{\ a}$ relates the boundary orthonormal frame with the coordinate frame: $e^1_{a} := h^{\a}_{\ a} \d_{\a}$. Thus, equating \eqref{appendix L subprincipal LHS} and \eqref{appendix L equiv subprincipal comparisson}, and setting $v := \log{f}$, we obtain
\begin{align}
    -\frac{i}{|\x|^3_1} \cT^{\a \b} \x_{\a} \x_{\b} &= i \d_{\x_{\lambda}} \vartheta_0^1 \cdot \d_{\lambda} v = \frac{i}{|\x|^3}\left( g_1^{\a \b} h^{\lambda}_{\ b} \d_{\lambda} v - g_1^{\alpha \lambda} h^{\b}_{\ b} \d_{\lambda} v \right) \gamma^b \x_{\a} \x_{\b}. \label{appendix L symbol eqn 1}
\end{align}
Since \eqref{appendix L symbol eqn 1} holds for all $\x \in T^*\sU \setm 0$, it follows that the symmetrized coefficients of $\x_{\a} \x_{\b}$ must be equal on both sides. Contracting these tensors with $(g_1)_{\a \b}$ and simplifying, we obtain
\begin{equation}
    (n-2) \gamma(dv) = -(g_1)_{\a \b} \cT^{\a \b} = -\frac{1}{2}(n-2)(n-1) \gamma(du). 
\end{equation}
Since $n > 2$ and $\gamma$ is injective, it follows that
\begin{equation}
    dv = -\frac{n-1}{2} du,
\end{equation}
whence $f = e^{-\frac{n-1}{2} u}$ up to a locally constant function. This proves Proposition \ref{prop: linear vs canonical equivalence}. \hfill \qed

\vspace{20pt}
\addcontentsline{toc}{section}{References}
\printbibliography

\end{document}